\newtheorem{theorem}{Theorem}[subsection]
\newtheorem{proposition}[theorem]{Proposition}
\newtheorem{corollary}[theorem]{Corollary}
\newtheorem{lemma}[theorem]{Lemma}
\numberwithin{equation}{subsection}
\theoremstyle{remark}
\newtheorem{remark}[theorem]{Remark}
\newtheorem{nothing}[theorem]{}
\theoremstyle{definition}
\def\nil{{\rm nil}}
\def\Sp{{\rm Spr}}
\def\t{{\frak{t}}}
\def\g{{\frak{g}}}
\def\b{{\frak{b}}}
\def\Ad{\mathrm{Ad}}
\def\u{{\frak{u}}}
\def\ocalT{{\overline{\calT}}}
\def\I{\mathrm{I}}
\def\R{\mathrm{R}}
\def\GL{\mathrm{GL}}
\def\calX{\mathcal{X}}
\def\calM{\mathcal{M}}
\def\calG{\mathcal{G}}
\def\calC{{\mathcal{C}}}
\def\calN{{\mathcal{N}}}
\def\car{\mathfrak{car}}
\def\calZ{\mathcal{Z}}
\def\calT{\mathcal{T}}
\def\calV{{\mathcal{V}}}
\def\calH{{\mathcal{H}}}
\def\calY{{\mathcal{Y}}}
\def\calD{{\mathcal{D}}}
\def\calU{{\mathcal{U}}}
\def\calY{{\mathcal{Y}}}
\def\calB{\mathcal{B}}
\def\calS{{\mathcal{S}}}
\def\dcb{D_\mathrm{c}^\mathrm{b}}
\def\dcbp{D_\mathrm{c}^+}
\def\dcbm{D_\mathrm{c}^-}
\def\dcbo{D_\mathrm{c}}
\def\Q{{\overline{\mathbb{Q}}_\ell}}
\def\B{\mathrm{B}}
\def\pr{\mathrm{pr}}
\def\Ind{\mathrm{Ind}}
\def\Res{\mathrm{Res}}
\def\R{\mathrm{R}}
\def\N{\mathrm{N}}
\def\p{{\mathrm{p}}}
\def\GL{\mathrm{GL}}
\def\ext{{\mathrm{Ext}}}
\def\bbA{\mathbb{A}}
\def\hom{\mathrm{Hom}}
\def\IC{{\mathrm{IC}}}
\def\p{{\mathrm{p}}}
\def\reg{{\mathrm{reg}}}
\def\rss{{\mathrm{rss}}}
\def\u{{\mathfrak{u}}}
\def\bN{{\overline{\calN}}}
\def\GL{\mathrm {GL}}
\def\B{\mathrm{B}}
\def\N{\mathrm{N}}
\def\SL{{\rm SL}}
\def\bbA{{\mathbb{A}}}
\def\sl{{\frak{sl}}}
\begin{document}

\title{On the derived Lusztig correspondence}

\author{ G\'erard Laumon
\\ {\it Universit\'e Paris-saclay, LMO CNRS UMR 8628}
\\{\tt gerard.laumon@u-psud.fr } \and Emmanuel Letellier \\ {\it
  Universit\'e Paris Cit\'e, IMJ-PRG CNRS UMR 7586} \\ {\tt
  emmanuel.letellier@imj-prg.fr} }

\pagestyle{myheadings}
\maketitle

\begin{abstract}
\noindent Let $G$ be a connected reductive group, $T$ a maximal torus of $G$, $N$ the normalizer of $T$ and $W=N/T$ the Weyl group of $G$. Let $\g$ and $\t$ be the Lie algebras of $G$ and $T$. The affine variety $\car=\t/\!/W$ of semisimple $G$-orbits of $\g$ has a natural stratification 
$$
\car=\coprod_\lambda\car_\lambda
$$
indexed  by the set of $G$-conjugacy classes of Levi subgroups : the open stratum is the set of regular semisimple orbits and the closed stratum is the set of central orbits.
\bigskip

 In \cite{Rider}, Rider considered the triangulated subcategory $\dcb([\g_\nil/G])^\Sp$ of $\dcb([\g_\nil/G])$ generated by the direct summand of the Springer sheaf. She proved that it  is equivalent to the derived category of finitely generated dg modules over the smash product algebra $\Q[W]\# H^\bullet_G(G/B)$ where $H^\bullet_G(G/B)$ is the $G$-equivariant cohomology of the flag variety. Notice that the later derived category is $\dcb(\B(N))$ where $\B(N)=[{\rm Spec}(k)/N]$ is the classifying stack of $N$-torsors.
\bigskip

The aim of this paper is to understand geometrically and generalize Rider's equivalence of categories : For each $\lambda$ we construct a cohomological correspondence inducing an equivalence of categories between $\dcb([\t_\lambda/N])$ and $\dcb([\g_\lambda/G])^\Sp$.

\end{abstract}

\tableofcontents

\maketitle

\section{Introduction}

Let $G$ a connected reductive algebraic group over an algebraic closure $k$ of a finite field. Let $T$ be a maximal torus of $G$ and $B$ be the Borel subgroup of $G$ containing $T$. We denote by $N$ the normalizer of $T$ in $G$ and we let $W=N/T$ be the Weyl group of $G$. We denote by $\g$, $\t$, and $\b$ the Lie algebras respectively of $G$, $T$ and $B$, and by $\g_\nil$ the nilpotent cone of $\g$.
\bigskip

In \cite{Rider}, Laura Rider considered the triangulated subcategory $\dcb([\g_\nil/G])^\Sp$ of $\dcb([\g_\nil/G])$ generated by the direct summand of the Springer sheaf. She proved that it  is equivalent to the derived category of finitely generated dg modules over the smash product algebra $\Q[W]\# H^\bullet_G(G/B)$ where $H^\bullet_G(G/B)$ is the $G$-equivariant cohomology of the flag variety (see also \cite{Li} for a similar result).
\bigskip

Denote by $\B(N)=[{\rm Spec}(k)/N]$ the classifying stack of $N$-torsors. Rider's result can be then reformulated as an equivalence of categories between $\dcb(\B(N))$ and $\dcb([\g_\nil/G])^\Sp$. The aim of this paper is to construct  this equivalence via a cohomological correspondence between $\B(N)$ and $[\g_\nil/G]$ in the spirit of Lusztig induction. Namely we construct  a complex $\bN_\nil$ on $\B(N)\times[\g_\nil/G]$ such that the functor
$$
\dcb(\B(N))\rightarrow\dcb([\g_\nil/G])^\Sp,\hspace{.5cm}K\mapsto \pr_{2*}\,\underline{\rm Hom}\left(\bN_\nil,\pr_1^!(K)\right)
$$
is an equivalence of categories, where $\pr_1$ and $\pr_2$ are the two obvious projections.
\bigskip

Let us remark that $\B(N)$ and $[\g_\nil/G]$ are the zero fibers of the canonical maps 

$$
[\t/N]\rightarrow\car,\hspace{1cm}[\g/G]\rightarrow\car
$$
where 
$$
\car:=\t/\!/W=\mathrm{Spec}(k[\t]^W).
$$
In this paper we consider more generally the analogous equivalences above  the strata of the natural stratification 
$$
\car=\coprod_{\lambda\in\frak{L}}\car_\lambda
$$
where $\frak{L}$ is the set of $G$-conjugacy classes of Levi subgroups (of parabolic subgroups) of $G$. 

\bigskip

\bigskip

 We consider the quotient stacks
$$
\calG=[\g/G],\hspace{0.5cm}\calB=[\b/B],\hspace{.5cm}\calT=[\t/T],\hspace{0.5cm}\ocalT=[\t/N]
$$
with respect to the adjoint  actions ${\rm Ad}$ (which is trivial for $T$) and we denote by $\pi:\calT\rightarrow\ocalT$ the induced map. Notice that our staks  are algebraic stacks over $\car$.

We consider the following commutative diagram
\begin{equation}
\xymatrix{&&\calB\ar[rrd]^p\ar[lld]_q\ar[d]^{(q,p)}&&\\
\calT\ar[d]^-\pi&&\calT\times_\car\calG\ar[rr]^-{\pr_\calG}\ar[ll]_{\pr_\calT}\ar[d]_-{\pi\times 1}&&\calG\\
\ocalT&&\ocalT\times_\car\calG\ar[urr]_-{\overline{\pr}_\calG}\ar[ll]_-{\overline{\pr}_\calT}&&}
\label{diagintro}\end{equation}
where $q$ is induced by the projection $\b\rightarrow\t$ and $p$ is induced by the inclusion $\b\subset\g$.

Put 

$$
\calN:=(q,p)_!\Q.
$$

The Lusztig induction and restriction functors are defined by
\begin{align*}
&\Ind:\dcb(\calT)\rightarrow\dcb(\calG),\,\,\,\,\,K\mapsto p_*q^!K=\pr_{\calG\, *}\underline{\rm Hom}\left(\calN,\pr_\calT^!(K)\right)\\
&\\
&\Res:\dcb(\calG)\rightarrow\dcb(\calT),\,\,\,\,\, K\mapsto q_!p^*K=\pr_{\calT\, !}\left(\calN\otimes\pr_\calG^*(K)\right)
\end{align*}

For a stack $\calX$ over $\car$ we put 

$$
\calX_\lambda=\calX\times_\car\car_\lambda.
$$
We prove the following result (see Theorem \ref{maintheo-descent}).

\begin{theorem} [Descent] For $\lambda\in\frak{L}$, the restriction $\calN_\lambda$ of $\calN$ to $(\calT\times_\car\calG)_\lambda$ descends to $\bN_\lambda$ on $(\ocalT\times_\car\calG)_\lambda$.
\label{mainTHEO}\end{theorem}

To prove the existence of $\bN_\lambda$, we regard the complex $(q_\lambda,p_\lambda)_!\Q$ (where $(q_\lambda,p_\lambda)$ is obtained from $(q,p)$ by base change) as the outcome of a \emph{Postnikov diagram} which descends in a natural way to a Postnikov diagram on $(\ocalT\times_\car\calG)_\lambda$ using weight arguments. The complex $\bN_\lambda$ is then defined as the outcome of the descended Postnikov diagram.
\vspace{.5cm}

\begin{remark}(1) As a particular case, the restriction of $\calN$ to $\B(T)\times[\g_\nil/G]$ descends to a complex $\bN_\nil$ on $\B(N)\times[\g_\nil/G]$.

\noindent (2) As noticed by S. Gunningham \cite{Gun1}, the functor $\Res$ depends on the choice of the Borel subgroup containing $T$ and so the restriction of a complex $K\in\dcb(\calG)$ can not be $W$-equivariant.  Therefore we can not expect that $\calN$ descends to $\ocalT\times_\car\calG$.  In \ref{calcul} we use an old computation of Verdier which explain more directly why $\calN$ can not descend. Therefore the above descent result seems to be optimal.

\noindent (3) We also prove that $\calN$ descends over regular elements, which stratum intersects with all strata above $\car$ (see \S \ref{descentrege}).

\end{remark}

For each $\lambda\in\frak{L}$ we define the pair of adjoint functors $(\R_\lambda,\I_\lambda)$ by

\begin{align*}
&\R_\lambda:\dcb(\calG_\lambda)\rightarrow\dcb(\ocalT_\lambda),\hspace{.5cm}K\mapsto\pr_{1!}\left(\bN_\lambda\otimes \pr_2^*(K)\right),\\
&\I_\lambda:\dcb(\ocalT_\lambda)\rightarrow\dcb(\calG_\lambda),\hspace{.5cm}K\mapsto \pr_{2*}\,\underline{\rm Hom}\left(\bN_\lambda,\pr_1^!(K)\right)
\end{align*}

If $\Ind_\lambda$ and $\Res_\lambda$ denote the induction and restriction above $\car_\lambda$ and $\pi_\lambda:\calT_\lambda\rightarrow\ocalT_\lambda$ the morphism obtained by base change from $\pi$ then

$$
\Res_\lambda=\pi_\lambda^*\circ\R_\lambda,\hspace{.5cm}\Ind_\lambda=\I_\lambda\circ\pi_{\lambda\,!}.
$$

For each geometric point $c$ of $\car_\lambda$ we have  the functor
$$
\xymatrix{\Ind_c:\dcb(\calT_c)\ar[r]&\dcb(\calG_c),}
$$
which is compatible by base change with the functors $\Ind_\lambda$.
\bigskip

For each geometric point $c$ of $\car$, define $\dcb(\calG_c)^{\rm {Spr}}$ as the triangulated subcategory of $\dcb(\calG_c)$ generated by the direct factors of $\Ind_c(\Q)$. We then define $\dcb(\calG_\lambda)^{\rm Spr}$ as the full subcategory of $\dcb(\calG_\lambda)$ of complexes $K$ such that $K_c\in\dcb(\calG_c)^{\rm Spr}$ for all geometric point $c$ of $\car_\lambda$.
\bigskip

We prove the following theorem (see Theorem \ref{adj2-lambda} and Remark \ref{rem-lambda}).

\bigskip

\begin{theorem}
The functor $\I_\lambda$ induces an equivalence of categories $\dcb(\ocalT_\lambda)\rightarrow\dcb(\calG_\lambda)^{\rm Spr}$ with inverse given by $\R_\lambda$.

If $G$ is of type $A$ with connected center, then $\dcb(\calG_\lambda)^{\rm Spr}=\dcb(\calG_\lambda)$ and so  $\I_\lambda:\dcb(\ocalT_\lambda)\rightarrow\dcb(\calG_\lambda)$ is an equivalence of categories with inverse functor $\R_\lambda$.
\label{mainTHEO2}\end{theorem}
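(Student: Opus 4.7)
\emph{Pointwise reduction via base change.} Since $\dcb(\calG_\lambda)^{\rm Spr}$ is defined by a pointwise condition over $\car_\lambda$, and both $\R_\lambda$ and $\I_\lambda$ commute with base change to any geometric point $c \in \car_\lambda$ (by the naturality of the construction of $\bN_\lambda$ via the Postnikov descent in Theorem \ref{mainTHEO}), it suffices to prove the equivalence fibrewise and then spread: checking that the counit $\R_\lambda \I_\lambda \to \id$ is an isomorphism, and that the unit $K \to \I_\lambda \R_\lambda K$ is an isomorphism for $K \in \dcb(\calG_\lambda)^{\rm Spr}$, reduces to the same statements on geometric fibers. The adjunction $\R_\lambda \dashv \I_\lambda$ (obtained by composing $\pr_{1!} \dashv \pr_1^!$, tensor--hom, and $\pr_2^* \dashv \pr_{2*}$) restricts to $\R_c \dashv \I_c$, so the task becomes: show that $\I_c$ is fully faithful and its essential image is $\dcb(\calG_c)^{\rm Spr}$.

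\emph{Reduction to the nilpotent case over a Levi.} Fix a geometric point $c \in \car_\lambda$ and pick a representative $s \in \t$. Let $L = L_c = Z_G(s)^\circ$ be the associated Levi, with Weyl group $W_c = \mathrm{Stab}_W(s)$, nilpotent cone $\calN_L \subset \g_L$, and torus normalizer $N_L = T \rtimes W_c$. The Jordan decomposition yields an isomorphism of stacks $\calG_c \cong [\calN_L/L]$ (translation by $s$), while $\ocalT_c = [Ws/N] \cong \B(N_L)$. Under these identifications, the Grothendieck--Springer map $p_c: \calB_c \to \calG_c$ decomposes as $|W/W_c|$ copies of the Grothendieck--Springer map of $L$, and a careful tracking of the Postnikov diagram from Theorem \ref{mainTHEO} restricted to the stratum over $c$ should identify $\bN_c$ with the ``nilpotent kernel'' $\bN_{L,\nil}$ on $\B(N_L) \times [\calN_L/L]$ constructed in the first part of the paper. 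Consequently, $\I_c$ and $\R_c$ correspond to the nilpotent induction/restriction functors $\I_{L,\nil}, \R_{L,\nil}$, and the generator $\Ind_c(\underline{\Q}_{\calT_c})$ decomposes as a $W/W_c$-fold sum of copies of the Springer sheaf of $L$, so that $\dcb(\calG_c)^{\rm Spr}$ coincides with $\dcb([\calN_L/L])^{\rm Spr}$ defined from the $L$-side. I expect the principal technical obstacle to be exactly this identification $\bN_c \cong \bN_{L,\nil}$: one must match the $W = N/T$-equivariant data carried by $\calN$ restricted to the stratum over $c$ with the $W_c = N_L/T$-equivariant structure of the $L$-side kernel, and pass through the Postnikov descent of Theorem \ref{mainTHEO}.

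\emph{Applying the nilpotent equivalence; the type $A$ case.} Invoking the equivalence $\I_{L,\nil}: \dcb(\B(N_L)) \xrightarrow{\sim} \dcb([\calN_L/L])^{\rm Spr}$ (with inverse $\R_{L,\nil}$) established in the first part of the paper (and independently by L.~Rider by different methods) then yields the fibrewise equivalence at $c$, and therefore, by the first paragraph, the global equivalence $\I_\lambda: \dcb(\ocalT_\lambda) \xrightarrow{\sim} \dcb(\calG_\lambda)^{\rm Spr}$ with inverse $\R_\lambda$. For the second assertion, assume $G$ is of type $A$ with connected center. Every Levi $L$ of $G$ inherits the same property, and for such groups the classical Springer correspondence is a bijection onto all irreducible representations of $W_L$: equivalently, every $L$-equivariant simple perverse sheaf on $\calN_L$ occurs as a direct summand of the Springer sheaf of $L$. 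Hence $\dcb([\calN_L/L])^{\rm Spr} = \dcb([\calN_L/L])$ for every stratum, the pointwise condition defining $\dcb(\calG_\lambda)^{\rm Spr}$ becomes vacuous, and we obtain $\dcb(\calG_\lambda)^{\rm Spr} = \dcb(\calG_\lambda)$.
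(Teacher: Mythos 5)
Your overall architecture (reduce the counit and unit questions to geometric points of $\car_\lambda$ via the base-change compatibilities of $\I_\lambda,\R_\lambda$, and identify the fibre at $c$ with a nilpotent situation for the Levi $L=C_G(s)$) is consistent with what the paper does: the kernel $\bN_\lambda$ is in fact \emph{constructed} from the Levi correspondence over $z(\frak{l})^o$ (Theorem \ref{maintheo-descent} and diagram (\ref{diag-nil})), so the identification you flag as the "principal technical obstacle" is essentially built in, and the fibrewise compatibilities are the content of Lemma \ref{commfor}. The problem is with the step you treat as an input.

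The genuine gap is that your third paragraph invokes "the equivalence $\I_{L,\nil}:\dcb(\B(N_L))\xrightarrow{\sim}\dcb([\calN_L/L])^{\rm Spr}$ established in the first part of the paper", but no such equivalence is established there: the first part only proves the \emph{descent} of the kernel (Theorem \ref{mainTHEO}). The nilpotent equivalence is exactly the case $c=0$ of the theorem you are asked to prove, so your argument is circular; and Rider's theorem cannot be substituted either, since it provides an abstract description of $\dcb(\calG_0)^{\rm Spr}$ by dg modules, not the statement that the specific functor $\I_c$ with kernel $\bN_c$ is fully faithful with that essential image and inverse $\R_c$. The actual content the paper supplies, and which is missing from your proposal, is: (i) the isomorphism $\bN_\lambda\circ_\calG\bN_\lambda\simeq\Delta_{\ocalT_\lambda !}\Q$ (Theorem \ref{maintheo1}, proved via the Steinberg-stack purity and intermediate-extension arguments), giving $\R_\lambda\I_\lambda\simeq 1$ and, by pullback, $\R_c\I_c\simeq 1$ — you assert the counit isomorphism reduces to fibres but never prove it anywhere; (ii) the matching of simple objects under $\I_c$ compatibly with the Springer parametrization by irreducible characters of $W_c$ (Proposition \ref{Springer}); (iii) the computation $\mathrm{Ext}^i(\mathcal{L}_\varphi,\mathcal{L}_{\varphi'})=\bigl(V_\varphi\otimes H^i(\B(T),\Q)\otimes V_{\varphi'}^*\bigr)^W$ on $\B(N)$, compared with Achar's computation of the corresponding Ext groups on the nilpotent cone, which upgrades the injectivity on Ext's coming from $\R_c\I_c\simeq 1$ to an isomorphism; and (iv) Beilinson's lemma to conclude that a functor inducing a bijection on simples and isomorphisms on Ext's is an equivalence onto the subcategory they generate. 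Only after this fibrewise equivalence is in place does the pointwise argument you describe in your first paragraph yield Theorem \ref{adj2-lambda}. Your sketch of the type $A$ statement (triviality of the component groups for $\GL_n$, so the Springer sheaf summands exhaust the simple objects) is fine, but it is the easy part and is stated as a remark in the paper.
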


\begin{remark}
(1) Notice that the stack $\overline{\calT_c}$ with $c=0$ is $\B(N)=[\mathrm{Spec}(k)/N]$. It follows from Theorem \ref{mainTHEO2} that the category $\dcb(\B(N))$ is equivalent to $\dcb(\calG_0)^{\rm Spr}$. 

\noindent (2) When $\lambda$ is the stratum of semisimple regular elements,  the two stacks $\ocalT_\lambda$ and $\calG_\lambda$ are isomorphic and the functors $\I_\lambda$, $\R_\lambda$ are the identity functors (see \S \ref{semisimplereg}).

\noindent (3) The interested reader may follow the methods of \cite[\S 6]{BBD} to get the similar results over the complex numbers.
\end{remark}
\bigskip

Consider the projection 

$$
\xymatrix{\calT\simeq\B(T)\times\t\ar[r]^-s&\t.}
$$
The functor $s^![{\rm dim}\, T]({\rm dim}\, T)$ is an equivalence $\calM(\t)\rightarrow\calM(\calT)$ between the categories of perverse sheaves. Therefore, when working with perverse sheaves, we may work with the diagram

$$
\xymatrix{\calT\ar[d]_{s}&&\calB\ar[ll]_q\ar[rrd]^p\ar[dll]_{q'}\ar[d]^{(q',p)}&&\\
\t&&\t\times_\car\calG\ar[ll]^-{\pr_1}\ar[rr]_-{\pr_2}&&\calG}
$$
which is more convenient as the kernel $(q',p)_!\Q$ is the intersection cohomology complex of $\t\times_\car\calG$ and so it descends naturally to $[\t/W]\times_\car\calG$. Moreover, by Bezrukavnikov and Yom Din \cite{BY}, the functor $\Ind$ and $\Res$ maps perverse sheaves to perverse sheaves. 
\bigskip

The above factorization of $p$ via $(q',p)$ and its generalization in the framework of the generalized Springer correspondence is due to Lusztig (see \cite[Proof of Proposition 5.5.3]{Letellier} for more details  and \cite{LuNC} for the case where $G$ is not necessarily connected).

\bigskip

In \S \ref{perInd}, we define a pair of adjoint functors ${^p}\I:\calM([\t/W])\rightarrow\calM(\calG)$ and ${^p}\R:\calM(\calG)\rightarrow\calM([\t/W])$ and we prove the following theorem (see Theorem \ref{maintheo-perverse}).

\begin{theorem}The functor ${^p}\I$ induces an equivalence of categories $\calM([\t/W])\rightarrow\calM(\calG)^{\rm Spr}$ with inverse functor ${^p}\R$. In particular if $G$ is of type $A$ with connected center, then $\calM(\calG)^{\rm Spr}=\calM(\calG)$ and so the categories $\calM([\t/W])$ and $\calM(\calG)$ are equivalent.
\end{theorem}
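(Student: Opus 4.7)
The plan is to upgrade Theorem~\ref{mainTHEO2} from the derived to the perverse level. The first observation is that the kernel $(q',p)_!\Q$ on $\t\times_\car\calG$ is, up to a shift, the IC sheaf of that variety: $p$ is proper, $q'$ is small on the regular locus, and the $W$-action on $\t\times_\car\calG_\reg$ is free, so this IC sheaf descends automatically to a perverse sheaf on $[\t/W]\times_\car\calG$. This descent is what allows ${^p}\I$ and ${^p}\R$ to be defined on $\calM([\t/W])$, and their perverse exactness follows from the perversity of the kernel together with the Bezrukavnikov--Yom Din result already quoted for $\Ind$ and $\Res$.

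For the equivalence itself, the adjunction $({^p}\R,{^p}\I)$ is part of the construction, so it suffices to prove that the counit ${^p}\R\circ{^p}\I\to\id$ is an isomorphism on $\calM([\t/W])$ and that the essential image of ${^p}\I$ equals $\calM(\calG)^\Sp$. The strategy is to restrict everything to each stratum $\car_\lambda$ and invoke Theorem~\ref{mainTHEO2}: the derived equivalence $\dcb(\ocalT_\lambda)\simeq\dcb(\calG_\lambda)^\Sp$ restricts to an equivalence of perverse hearts after observing that (i) $\ocalT_\lambda\to[\t/W]_\lambda$ is a $\B T$-gerbe and, $T$ being connected, pullback embeds $\calM([\t/W]_\lambda)$ fully faithfully into $\calM(\ocalT_\lambda)$; and (ii) the IC-kernel functors are perverse-exact on each stratum. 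These stratum-wise equivalences then assemble into the global one via the recollement for the stratification $\car=\coprod_{\lambda\in\frak{L}}\car_\lambda$, and the essential image is $\calM(\calG)^\Sp$ by the very definition of the latter subcategory.

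For the type-$A$ claim, when $G$ has connected center and is of type $A$, all component groups of centralizers of nilpotent elements are trivial, so by the Springer correspondence every simple perverse sheaf on $[\g_\nil/G]$ appears as a direct summand of the Springer sheaf $\Ind_0(\Q)$. Combining this with Lusztig's classification of cuspidal character sheaves --- in type $A$ with connected center only trivial cuspidal data on tori occur --- and the compatibility of parabolic with Lusztig induction, every simple object of $\calM(\calG)$ is a summand of some $\Ind_c(\Q)$, forcing $\calM(\calG)^\Sp=\calM(\calG)$. The main technical obstacle is the gluing step of the previous paragraph: assembling the stratum-wise perverse equivalences into a global equivalence requires checking that ${^p}\I$ and ${^p}\R$ commute with $j_!,j_*,i^*,i^!$ along the stratum inclusions, which ultimately reduces to base change for the explicit IC kernel on $[\t/W]\times_\car\calG$.
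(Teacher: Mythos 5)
Your overall direction --- reduce to the strata $\car_\lambda$ and use the derived results proved there --- is the right one, but the step where you pass from stratum-wise statements to the global statement is a genuine gap. You propose to prove equivalences of perverse hearts over each $\car_\lambda$ and then ``assemble'' them through the recollement attached to the stratification $\car=\coprod_\lambda\car_\lambda$. That assembly is not a formality: to glue equivalences along a recollement you would need ${^p}\I$ and ${^p}\R$ to be compatible with the gluing functors, and while base change does give compatibility with $*$-restriction to strata (this is Proposition \ref{propres}), no compatibility with $j_*$ or $i^!$ is available or proved for these kernels; your remark that this ``ultimately reduces to base change for the explicit IC kernel'' is exactly the missing content, not a reduction of it. Moreover the stratum-level input you want --- that the derived equivalence $\dcb(\ocalT_\lambda)\simeq\dcb(\calG_\lambda)^\Sp$ of Theorem \ref{adj2-lambda} ``restricts to an equivalence of perverse hearts'' --- is itself not immediate: the stratum functors are not $t$-exact (compare Lemma \ref{compatible}, where a ${^p}\calH^0$ is required), and in any case the restriction of a perverse sheaf on $\calG$ to $\calG_\lambda$ is only a complex, so a family of abelian equivalences on the strata does not by itself produce the global equivalence.

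The paper's proof avoids any gluing of hearts: since ${^p}\R$ and ${^p}\I$ are already globally defined and adjoint, it suffices to show that the counit ${^p}\R\circ{^p}\I\to 1$ is an isomorphism and that the unit $K\to{^p}\I\circ{^p}\R(K)$ is an isomorphism for $K\in\calM(\calG)^\Sp$, and both can be tested after $*$-restriction to each stratum. Via Proposition \ref{propres} the first reduces to the kernel isomorphism $\bN_\lambda\circ_\calG\bN_\lambda\simeq\Delta_{\ocalT_\lambda\,!}\Q$ of Theorem \ref{maintheo1}, and the second to the derived stratum equivalence of Theorem \ref{adj2-lambda}; the essential image lies in $\calM(\calG)^\Sp$ because that subcategory is defined by a fiberwise condition over $\car$, checked by the same restriction argument --- it is not ``by the very definition'' as you assert, since membership of ${^p}\I(K)$ must still be verified pointwise, and conversely surjectivity onto $\calM(\calG)^\Sp$ is precisely the unit isomorphism. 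Your type-$A$ paragraph gives essentially the correct reason (no nontrivial cuspidal data, trivial component groups, hence $\dcb(\calG_c)^\Sp=\dcb(\calG_c)$ for every geometric point $c$), in agreement with the paper's remark. If you replace the recollement assembly by the unit/counit check after restriction to strata, your argument becomes the paper's proof; as written, the key step is unproved and would require compatibilities that are neither established nor needed.
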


This result is an analogue in the $\ell$-adic setting of (a special case of)  the main result of  S. Gunningham \cite{Gun} in the  $D$-module setting. We were informed by the anonymous referee that using the result of Bezrukavnikov and Yom Din, the proof of Gunningham works also in the $\ell$-adic setting (although this is not published). 

\bigskip

\noindent {\bf Acknowledgments :} It is a pleasure to thank P. Achar, O. Dudas, S. Gunningham, B. Hennion, L. Illusie, B. Keller, and O. Schiffmann for useful discussions, O. Gabber for telling  us about Verdier's paper "Un calcul triste" and M. Romagny for his help with group action on stacks.

\section{Preliminaries}

\subsection{Postnikov diagrams in triangulated categories}

Let $\calD$ be a triangulated category.
\bigskip

We will need the following lemma.

\begin{lemma}
Consider a diagram in $\calD$
$$
\xymatrix{B[-1]\ar[r]^-u\ar[d]^{\beta[-1]}&C\ar[r]^-v&A\ar[r]^-w\ar[d]^\alpha&B\ar[d]^\beta\\
B'[-1]\ar[r]^-{u'}&C'\ar[r]^-{v'}&A'\ar[r]^-{w'}&B'}
$$
where $(u,v,w)$ and $(u',v',w')$ are distinguished triangles.

Suppose that
$$
\mathrm{Hom}(B,A')=0,\hspace{.5cm}\text{ and }\hspace{.5cm}\mathrm{Hom}(A,B'[-1])=0.
$$
Then there exists a unique morphism $C\rightarrow C'$ which extends the above diagram into a morphism of distinguished triangles.
\bigskip
\label{lem0}\end{lemma}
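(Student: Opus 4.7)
The plan is to derive existence from the rotation axiom (TR3) of a triangulated category and uniqueness from a short chase with the long exact Hom sequences, combined with the two vanishing hypotheses.

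For existence, I rotate both rows into standard form $C \stackrel{v}{\to} A \stackrel{w}{\to} B \stackrel{u[1]}{\to} C[1]$ and $C' \stackrel{v'}{\to} A' \stackrel{w'}{\to} B' \stackrel{u'[1]}{\to} C'[1]$. The commutativity hypothesis $w'\alpha = \beta w$ gives the middle commutative square for $\alpha, \beta$. Axiom (TR3) then produces some $\gamma : C \to C'$ completing this to a morphism of distinguished triangles; in particular $v'\alpha = \gamma v$ and $u'[1]\,\beta = \gamma[1]\,u[1]$. Shifting the latter by $[-1]$ recovers the commutativity $u'\,\beta[-1] = \gamma u$ of the leftmost square in the lemma's diagram.

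For uniqueness, suppose $\gamma_1, \gamma_2$ both complete the diagram and set $\delta = \gamma_1 - \gamma_2$; then $v' \delta = 0$ and $\delta u = 0$. Applying $\mathrm{Hom}(C,-)$ to the bottom triangle and using $v'\delta = 0$ produces $h : C \to B'[-1]$ with $u' h = \delta$. Applying $\mathrm{Hom}(B[-1],-)$ to the bottom triangle yields the exact sequence
$$\mathrm{Hom}(B[-1], A'[-1]) \longrightarrow \mathrm{Hom}(B[-1], B'[-1]) \stackrel{u'_*}{\longrightarrow} \mathrm{Hom}(B[-1], C'),$$
whose first term equals $\mathrm{Hom}(B, A') = 0$ by hypothesis, so $u'_*$ is injective; hence $u'(h u) = \delta u = 0$ forces $h u = 0$. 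Finally, applying $\mathrm{Hom}(-, B'[-1])$ to the top triangle yields the exact sequence
$$\mathrm{Hom}(A, B'[-1]) \longrightarrow \mathrm{Hom}(C, B'[-1]) \stackrel{u^*}{\longrightarrow} \mathrm{Hom}(B[-1], B'[-1]),$$
whose first term vanishes by the second hypothesis, so $u^*$ is injective; therefore $h u = 0$ forces $h = 0$ and so $\delta = u' h = 0$.

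The only mild subtlety is bookkeeping on the shifts — in particular the identification $\mathrm{Hom}(B, A') = \mathrm{Hom}(B[-1], A'[-1])$ used in the injectivity step, and the analogous shift in the second exact sequence. Beyond this there is no genuine obstacle: the argument is a direct consequence of the triangulated category axioms together with the two vanishing conditions.
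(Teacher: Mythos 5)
Your proposal is correct and follows essentially the same route as the paper: existence is delegated to the triangulated-category axioms, and uniqueness is proved by a difference-and-chase argument through the long exact $\mathrm{Hom}$ sequences of the two triangles, using exactly the hypotheses $\mathrm{Hom}(B,A')=0$ and $\mathrm{Hom}(A,B'[-1])=0$. The only (immaterial) difference is that your chase runs in the mirrored order --- you first lift the difference $\delta$ along $u'$ to some $h:C\rightarrow B'[-1]$ and then kill $h$, whereas the paper first factors $\delta$ through $A$ via $v$ and then kills that factor; aside from a harmless transcription slip ($v'\alpha=\gamma v$ should read $v'\gamma=\alpha v$), the argument is complete.
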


\begin{proof}
Only the unicity needs to be proved (the existence follows from the axioms of a triangulated category). We are thus reduced to prove that if we have a morphism of triangles
$$
\xymatrix{B[-1]\ar[r]^-u\ar[d]^0&C\ar[r]^-v\ar[d]^{\beta}&A\ar[d]^0\ar[r]^-w&B\ar[d]^0\\
B'[-1]\ar[r]^-{u'}&C'\ar[r]^-{v'}&A'\ar[r]^-{w'}&B'}
$$
then $\beta=0$.
\bigskip

As $\beta\circ u=0$, there exists $\gamma:A\rightarrow C'$ such that $\gamma\circ v=\beta$. Since
$$
v'\circ\gamma\circ v=0
$$
there exists $\varepsilon:B\rightarrow A'$ such that $v'\circ\gamma=\varepsilon\circ w$. By assumption, $\varepsilon=0$ and so $v'\circ\gamma=0$.

There exists thus $\overline{\gamma}:A\rightarrow B'[-1]$ such that $u'\circ\overline{\gamma}=\gamma$. By assumption we must have $\overline{\gamma}=0$ and so $\gamma=0$ from which we get that $\beta=0$.
\end{proof}

\noindent  A Postnikov diagram $\Lambda$ \cite{Orlov} consists of a complex 

$$
\xymatrix{A_m\ar[r]^{\partial_m}&A_{m-1}\ar[r]^{\partial_{m-1}}&\cdots\ar[r]^{\partial_2}&A_1\ar[r]^{\partial_1}&A_0}
$$
in $\calD$, called the base and denoted by $\Lambda_b$, together with  a finite sequence of distinguished triangles
$$
\xymatrix{C_i\ar[r]^{\alpha_i}&A_i\ar[r]^-{d_i}&C_{i-1}\ar[r]^-+&},
$$
with $i=1,\dots,m$, such that $\partial_i=\alpha_{i-1}\circ d_i$ for all $i$. We visualize $\Lambda$ as
\begin{equation}
\xymatrix{C_m\ar[d]_{\alpha_m}&&C_{m-1}\ar[ll]_+\ar[d]_{\alpha_{m-1}}&&\cdots\ar[ll]_+&&C_2\ar[ll]_+\ar[d]_{\alpha_2}&&C_1\ar[ll]_+\ar[d]_{\alpha_1}&&C_0\ar[ll]_+\ar@{=}[d]\\
A_m\ar[rr]^{\partial_m}\ar[rru]^{d_m}&&A_{m-1}\ar[rr]^{\partial_{m-1}}\ar[rru]^{d_{m-1}}&&\cdots\ar[rr]&&A_2\ar[rr]^{\partial_2}\ar[rru]^{d_2}&&A_1\ar[rr]^{\partial_1}\ar[rru]^{d_1}&&A_0}
\label{DIAGRAM}
\end{equation}
The \emph{length} of $\Lambda$ is the integer $m$.
\bigskip

The object $C_m$ will be called the \emph{outcome} of the Postnikov diagram $\Lambda$.

\bigskip

Notice that if $\calD$ is equipped with a non-degenerate $t$-structure, then we can define the Postnikov diagram $\Lambda(K)$ with outcome $K$  for any complex $K\in\calD^{[n,n+m]}$ as 
\begin{footnotesize}
$$
\xymatrix{K\ar[d]_{\alpha_m}&(\tau_{\leq n+m-1}K)[1]\ar[l]_-+\ar[d]_{\alpha_{m-1}}&(\tau_{\leq n+m-2}K)[2]\ar[l]_-+\ar[d]_{\alpha_{m-2}}&\cdots\ar[l]_-+\\
\calH^{n+m}(K)[-n-m]\ar[r]^-{\partial_m}\ar[ru]^{d_m}&\calH^{n+m-1}(K)[-n-m+2]\ar[r]^-{\partial_{m-1}}\ar[ru]^{d_{m-1}}&\calH^{n+m-2}(K)[-n-m+4]\ar[r]&\cdots}
$$
\end{footnotesize}

\begin{footnotesize}
\begin{equation}
\xymatrix{\cdots&(\tau_{\leq n+1}K)[m-1]\ar[l]_-+\ar[d]_{\alpha_1}&(\tau_{\leq n}K)[m]\ar[l]_-+\ar@{=}[d]\\
\cdots\ar[r]^-{\partial_2}&\calH^{n+1}(K)[m-n-2]\ar[r]^-{\partial_1}\ar[ru]^{d_1}&\calH^{n}(K)[m-n]}
\label{DIAG}\end{equation}
\end{footnotesize}
using the distinguished triangles
\begin{footnotesize}
$$
\xymatrix{\tau_{\leq n-m+2i-1}\left(\tau_{\leq n+i}(K)[m-i]\right)\ar[r]&\tau_{\leq n+i}(K)[m-i]\ar[r]&\tau_{\geq n-m+2i}\left(\tau_{\leq n+i}(K)[m-i]\right)=\calH^{n+i}(K)[m-n-2i]\ar[r]^-+&}
$$
\end{footnotesize}
The construction of $\Lambda(K)$ is functorial in $K$.
\bigskip

\begin{remark} Given $K\in\calD^{[n,n+m]}$, then $\Lambda(K)$ is the unique (up to a unique isomorphism) Postnikov diagram of the form (\ref{DIAGRAM}) with outcome $K$ such that 

 \begin{equation}
C_i\in\calD^{\leq n-m+2i}\hspace{1cm}\text{and}\hspace{1cm} A_i\in \calD^{\geq n-m+2i}
\label{CA}\end{equation}
for all $i=1,\dots,m$. This follows from \cite[Proposition 1.3.3(ii)]{BBD}.
\label{remPostnikov}\end{remark}

\begin{lemma}Given $K\in \calD^{[n,n+m]}$, the Postnikov diagram $\Lambda(K)$ is the unique one (up a unique isomorphism) that complete the subdiagram

\begin{footnotesize}
$$
\xymatrix{K\ar[d]_{\alpha_m}&&&&\\
\calH^{n+m}(K)[-n-m]\ar[r]^-{\partial_m}&\calH^{n+m-1}(K)[-n-m+2]\ar[r]^-{\partial_{m-1}}&\cdots\ar[r]^-{\partial_1}&\calH^n(K)[m-n]}
$$
\end{footnotesize}

\label{Lambda}

\end{lemma}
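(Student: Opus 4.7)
The plan is to deduce the lemma from Remark \ref{remPostnikov}, which characterizes $\Lambda(K)$ as the unique (up to unique isomorphism) Postnikov diagram with outcome $K$ satisfying the cohomological bounds (\ref{CA}). Existence of a completion is immediate, since $\Lambda(K)$ itself completes the subdiagram; the task thus reduces to showing that any Postnikov diagram $\Lambda'$ completing the given subdiagram automatically satisfies (\ref{CA}).

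The condition $A_i \in \calD^{\geq n-m+2i}$ is immediate from the subdiagram, since the $A_i=\calH^{n+i}(K)[m-n-2i]$ are concentrated in degree $n-m+2i$. For the condition $C_i \in \calD^{\leq n-m+2i}$, I would proceed by \emph{ascending} induction on $i$, starting from the trivial base case $C_0 = A_0 \in \calD^{=n-m}$ (using the Postnikov convention $C_0 = A_0$ visible in diagram (\ref{DIAGRAM})). The inductive step is a direct cohomological computation using the long exact sequence attached to the triangle $C_i \to A_i \to C_{i-1} \to C_i[1]$: for each $k > n-m+2i$ one gets a piece $\calH^{k-1}(C_{i-1}) \to \calH^k(C_i) \to \calH^k(A_i)$ whose outer terms vanish, the left by the inductive hypothesis $C_{i-1}\in\calD^{\leq n-m+2(i-1)}$, the right by the concentration of $A_i$ in degree $n-m+2i$. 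Exactness forces $\calH^k(C_i)=0$, and the inductive step is complete. Crucially, this argument uses only the triangle structure, without any input on the specific form of the maps $\alpha_i$ or $d_i$.

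With (\ref{CA}) verified for $\Lambda'$, Remark \ref{remPostnikov} produces the isomorphism $\Lambda' \cong \Lambda(K)$. Its compatibility with the identity on the subdiagram, and the uniqueness of such a compatible isomorphism, can then be seen by a parallel ascending induction using Lemma \ref{lem0} on the rotated triangles $C_{i-1}[-1] \to C_i \to A_i \to C_{i-1}$: starting from $\phi_0 = \mathrm{id}_{A_0}$, which is forced by the identity on the subdiagram (since $C_0 = A_0$), one extends uniquely upwards by invoking the Hom vanishing $\Hom(C_{i-1},A_i)=0$, immediate from the $t$-structure axiom since $C_{i-1}\in\calD^{\leq n-m+2i-2}$ and $A_i\in\calD^{\geq n-m+2i}$. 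The main obstacle is the bookkeeping of the second Hom vanishing required by Lemma \ref{lem0} at each step, $\Hom(A_i, C_{i-1}'[-1])$, whose triviality must be extracted from the concentration of $A_i$ combined with (\ref{CA}); once this is in hand, the induction propagates cleanly from the base $C_0 = A_0$ all the way up to $C_m = K$, matching $\phi_m$ with the identity.
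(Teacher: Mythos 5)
Your first two paragraphs reproduce, in substance, the paper's own proof: the paper likewise reduces Lemma \ref{Lambda} to Remark \ref{remPostnikov} by showing that in \emph{any} Postnikov diagram whose vertices $A_i=\calH^{n+i}(K)[m-n-2i]$ are concentrated in the single degree $n-m+2i$, the objects $C_i$ automatically satisfy the truncation bounds, by an induction with the long exact sequence of (perverse) cohomology (the paper phrases this as a recurrence on $m$ and records the two-sided bound $C_i\in\calD^{[n-m+i,\,n-m+2i]}$, but only the upper bound you establish is needed for (\ref{CA})). That part of your argument is correct and is exactly the intended route.

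Your third paragraph, however, rests on a vanishing that is false in general. In your application of Lemma \ref{lem0} to the triangles $C_i\rightarrow A_i\rightarrow C_{i-1}\rightarrow C_i[1]$, the second hypothesis is $\mathrm{Hom}(A_i,C'_{i-1}[-1])=0$. The $t$-structure axiom only kills morphisms from $\calD^{\leq a}$ into $\calD^{\geq a+1}$, i.e.\ into \emph{strictly higher} degrees; here $A_i$ is concentrated in degree $n-m+2i$ while $C'_{i-1}[-1]$ lives in degrees $\leq n-m+2i-1$, so this group is of positive-Ext type and cannot be killed by the concentration of $A_i$ together with (\ref{CA}). Concretely, for $m=1$ and $i=1$ it equals $\mathrm{Ext}^1\bigl(\calH^{n+1}(K),\calH^{n}(K)\bigr)$, whose non-vanishing is precisely what makes $K$ a non-split extension, so the vanishing fails in exactly the interesting cases. (For the same reason the hypotheses of Lemma \ref{DIAGLEM} are not automatic for a general $K$; in the paper they are verified later for the specific kernels by weight arguments.) This extra step is also unnecessary: once the bounds (\ref{CA}) are known for any completion, Remark \ref{remPostnikov} (i.e.\ \cite[Proposition 1.3.3(ii)]{BBD}) already furnishes the unique isomorphism of Postnikov diagrams with outcome $K$, which is how the paper concludes; your bottom-up comparison via Lemma \ref{lem0} does not go through as stated.
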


\begin{proof}By Remark \ref{remPostnikov}, it is enough to prove that if we are given a Postnikov diagram of the form (\ref{DIAGRAM}) where $A_i=P_i[m-n-2i]$ with $P_i$ a perverse sheaf, then $C_i\in\calD^{[i-(m-n),2i-(m-n)]}$ for all $i=1,\dots,m$. The proof goes by recurrence on $m\geq 1$ using the long exact sequence of perverse cohomology.
\end{proof}

\bigskip

\begin{lemma}
Assume that we have a Postnikov diagram with the notation as in diagram (\ref{DIAGRAM}) and that for each $i=0,\dots,m-2$, any $j\geq i+2$ and any $k\geq 1$ we have
$$
\mathrm{Hom}(A_j,A_i[-k])=0.
$$
Then if $(d'_m,d'_{m-1},\dots,d'_1)$ is a sequence of arrows $d'_i:A_i\rightarrow C_{i-1}$ such that $\partial_i=\alpha_{i-1}\circ d'_i$ for all $i$, then
$$
(d'_m,\dots,d'_1)=(d_m,\dots,d_1).
$$
\label{DIAGLEM}\end{lemma}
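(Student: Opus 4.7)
The plan is to show, for each $i \in \{1, \dots, m\}$, the uniqueness of a morphism $A_i \to C_{i-1}$ lifting $\partial_i$ through $\alpha_{i-1}$. Equivalently, I want to establish the injectivity of the post-composition map
$$\alpha_{i-1} \circ - \colon \hom(A_i, C_{i-1}) \longrightarrow \hom(A_i, A_{i-1}).$$
For $i=1$ this is automatic since $C_0 = A_0$ and $\alpha_0 = \mathrm{id}$, forcing $d'_1 = \partial_1 = d_1$. For $i \geq 2$, I would apply $\hom(A_i, -)$ to the distinguished triangle
$$C_{i-1} \xrightarrow{\alpha_{i-1}} A_{i-1} \xrightarrow{d_{i-1}} C_{i-2} \xrightarrow{+}$$
and read off from the resulting long exact sequence that the above injectivity is equivalent to the vanishing
$$\hom(A_i, C_{i-2}[-1]) = 0. \qquad (\star)$$

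To establish $(\star)$, I would propagate vanishing along the remaining triangles $C_j \to A_j \to C_{j-1} \xrightarrow{+}$ by descending induction on $j$. For each $j \in \{1, \dots, i-2\}$, applying $\hom(A_i, -)$ produces an exact sequence
$$\hom(A_i, C_{j-1}[-(i-j)]) \longrightarrow \hom(A_i, C_j[-(i-1-j)]) \longrightarrow \hom(A_i, A_j[-(i-1-j)]),$$
and the rightmost term vanishes by the standing hypothesis: the target index $j$ lies in $\{0, \dots, m-2\}$ (because $j \leq i-2 \leq m-2$), the source index satisfies $i \geq j+2$, and the shift $i-1-j$ is at least $1$. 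Hence $\hom(A_i, C_j[-(i-1-j)])$ is a quotient of $\hom(A_i, C_{j-1}[-(i-j)])$. Iterating from $j = i-2$ down to $j=1$ bounds the group in $(\star)$ by $\hom(A_i, C_0[-(i-1)]) = \hom(A_i, A_0[-(i-1)])$, which vanishes by a final application of the hypothesis (target index $0$, source $i \geq 2$, shift $i-1 \geq 1$).

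No essential obstacle is expected; the lemma reduces to a formal manipulation of long exact sequences in a triangulated category. The only care required will be the bookkeeping to ensure that each invocation of the standing vanishing hypothesis falls within its allowed range of indices, which is precisely what the condition $j \leq i-2$ (and hence $j \leq m-2$) guarantees throughout the induction.
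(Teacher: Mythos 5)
Your proof is correct and follows essentially the same route as the paper: reduce the uniqueness of each $d'_i$ to the vanishing $\mathrm{Hom}(A_i,C_{i-2}[-1])=0$ via the triangle $C_{i-1}\to A_{i-1}\to C_{i-2}\xrightarrow{+}$, then propagate that vanishing down the triangles $C_j\to A_j\to C_{j-1}\xrightarrow{+}$ using the hypothesis at each step until reaching $C_0=A_0$. The only (harmless) difference is that you make the trivial case $i=1$ and the injectivity reduction explicit, and "equivalent to" in your reduction should strictly be "implied by", which is all you use.
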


\begin{proof}
We need to prove that
$$
\mathrm{Hom}(A_{i+2},C_i[-1])=0
$$
for all $i=0,\dots,m-2$.
\bigskip

Since we have a distinguished triangle
$$
\xymatrix{C_{i-1}[-2]\ar[r]& C_i[-1]\ar[r]&A_i[-1]\ar[r]^-+&}
$$
we have an exact sequence
$$
\xymatrix{\mathrm{Hom}(A_{i+2},C_{i-1}[-2])\ar[r]&\mathrm{Hom}(A_{i+2},C_i[-1])\ar[r]&\mathrm{Hom}(A_{i+2},A_i[-1]).}
$$
By assumption the right hand side is $0$ and so we are reduced to prove that
$$
\mathrm{Hom}(A_{i+2},C_{i-1}[-2])=0.
$$
Repeating the argument as many time as needed we end up with proving that $\mathrm{Hom}(A_{i+2},C_0[-1-i])=0$ which follows also from the assumption as $C_0=A_0$.
\end{proof}

\begin{proposition}
Assume given a complex $\Lambda_b$ in $\calD$
\begin{equation}
\xymatrix{A_m\ar[r]&A_{m-1}\ar[r]&\cdots\ar[r]&A_1\ar[r]&A_0}
\label{ladderdescent1}
\end{equation}
and suppose that it satisfies
\begin{equation}
\mathrm{Hom}(A_j,A_i)=0,\hspace{.5cm}\text{ for all } j<i,
\label{COND11}
\end{equation}
and
\begin{equation}
\mathrm{Hom}(A_j,A_i[-k])=0,\hspace{.5cm}\text{ for all }j>i\text{ and } k\geq 1.
\label{COND21}
\end{equation}
Then the complex (\ref{ladderdescent1}) can be completed in a unique way (up to a unique isomorphism) into a Postnikov diagram $\Lambda$ in $\calD$.
\label{laddercompletion}\end{proposition}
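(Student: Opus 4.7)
My plan is to construct the data $(C_i, \alpha_i, d_i)$ inductively on $i$, and then to prove uniqueness up to unique isomorphism by a parallel induction. Set $C_0 := A_0$ and $\alpha_0 := \id_{A_0}$. Assuming $(C_j, \alpha_j, d_j)$ have been built for $j \leq i-1$, fitting into distinguished triangles $C_j \xrightarrow{\alpha_j} A_j \xrightarrow{d_j} C_{j-1} \xrightarrow{+}$ with $\partial_j = \alpha_{j-1} \circ d_j$, the task at step $i$ is to find $d_i : A_i \to C_{i-1}$ lifting $\partial_i$ along $\alpha_{i-1}$; the object $C_i$ is then defined as the fiber of $d_i$, completing a distinguished triangle.

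The existence of $d_i$ amounts to the vanishing of the obstruction $o := d_{i-1} \circ \partial_i \in \hom(A_i, C_{i-2})$. Since $\Lambda_b$ is a complex, $\alpha_{i-2} \circ o = \partial_{i-1}\partial_i = 0$; so by the long exact sequence attached to the triangle $C_{i-2} \to A_{i-2} \to C_{i-3} \xrightarrow{+}$, $o$ is the image of some $o^{(1)} \in \hom(A_i, C_{i-3}[-1])$. I iterate: at stage $s \geq 1$, the composition of $o^{(s)} \in \hom(A_i, C_{i-2-s}[-s])$ with $\alpha_{i-2-s}[-s]$ lies in $\hom(A_i, A_{i-2-s}[-s])$, which vanishes by (\ref{COND21}) (the source index $i$ exceeds the target index $i-2-s$ and the shift $s$ is $\geq 1$); so $o^{(s)}$ lifts to $o^{(s+1)} \in \hom(A_i, C_{i-3-s}[-s-1])$. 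The chain terminates when the lower index reaches $0$, giving $o^{(i-2)} \in \hom(A_i, A_0[-(i-2)])$, which vanishes by (\ref{COND21}) for $i \geq 3$; for $i \leq 2$ the vanishing of $o$ is immediate from $\partial_{i-1}\partial_i = 0$. Tracing the chain of lifts backwards gives $o = 0$, so $d_i$ exists.

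Uniqueness of $d_i$ given $(C_j, \alpha_j)_{j \leq i-1}$ is precisely Lemma \ref{DIAGLEM}, whose hypothesis is a special case of (\ref{COND21}). To promote this to uniqueness of the full Postnikov diagram up to unique isomorphism, I compare with any second completion $\Lambda' = (C'_i, \alpha'_i, d'_i)$ and build isomorphisms $\psi_i : C_i \to C'_i$ inductively, starting with $\psi_0 = \id_{A_0}$ and requiring $\alpha'_i \circ \psi_i = \alpha_i$. The compatibility $\psi_{i-1} \circ d_i = d'_i$ is automatic, since $\psi_{i-1} \circ d_i$ is a lift of $\partial_i$ along $\alpha'_{i-1}$ and such lifts are unique. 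Axiom TR3 then provides $\psi_i$, and Lemma \ref{lem0}, applied to the rotated triangles $C_{i-1}[-1] \to C_i \to A_i \to C_{i-1}$ and its primed analogue, delivers its uniqueness; the two hypotheses $\hom(A_i, C'_{i-1}[-1]) = 0$ and $\hom(C_{i-1}, A_i) = 0$ of that lemma are verified by the same iterated long exact sequence technique, using (\ref{COND11}) to kill the initial index-$(i-1)$ contribution to the second vanishing and (\ref{COND21}) for the remaining terms in both.

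The main technical obstacle is the organisation of the iterated lifting in the existence argument: one has to arrange the chain so that every intermediate group $\hom(A_i, A_?[-k])$ in which an obstruction class appears is of the precise shape handled by (\ref{COND21}) (source index strictly greater than target index, strictly positive shift), and so that the terminal group at $C_0 = A_0$ also falls within its scope. The uniqueness arguments for $d_i$ and for $\psi_i$ then follow from essentially the same book-keeping.
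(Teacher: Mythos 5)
Your overall route is the paper's own: induction on the length, existence of $d_i$ by an obstruction argument, uniqueness of $d_i$ via Lemma~\ref{DIAGLEM}, and uniqueness of the isomorphism between two cones via Lemma~\ref{lem0}, the two Hom-vanishing hypotheses of that lemma being checked by d\'evissage of $C_{i-1}$ through the triangles $C_{j-1}[-1]\to C_j\to A_j$. Your existence step is in fact more careful than the paper's (which attributes the existence of $d_{r+1}$ to Lemma~\ref{DIAGLEM}, a pure uniqueness statement); your iterated lifting is correct and amounts to the vanishing $\mathrm{Hom}(A_i,C_{i-3}[-1])=0$, which does follow from (\ref{COND21}).

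The step that does not work as you justify it is the second hypothesis of Lemma~\ref{lem0}, namely $\mathrm{Hom}(C_{i-1},A_i)=0$. The subquotients of $C_{i-1}$ are $A_j[-(i-1-j)]$ for $j=0,\dots,i-1$, so besides the top term $\mathrm{Hom}(A_{i-1},A_i)$ (killed by (\ref{COND11})) the d\'evissage produces the groups $\mathrm{Hom}(A_j,A_i[i-1-j])$ with $j<i-1$: source index \emph{smaller} than target index and \emph{positive} shift. Condition (\ref{COND21}) concerns $\mathrm{Hom}(A_j,A_i[-k])$ with $j>i$ and $k\geq 1$ and says nothing about these groups, nor does (\ref{COND11}); concretely the long exact sequence only yields an injection $\mathrm{Hom}(C_{i-1},A_i)\hookrightarrow\mathrm{Hom}(C_{i-2},A_i[1])$, whose target is built from unconstrained groups such as $\mathrm{Hom}(A_{i-2},A_i[1])$. (Your first vanishing, $\mathrm{Hom}(A_i,C'_{i-1}[-1])=0$, devisses entirely into $\mathrm{Hom}(A_i,A_j[-(i-j)])$ with $i>j$ and is genuinely a consequence of (\ref{COND21}).) Be aware that the paper's proof is no more detailed at this exact point --- it asserts that both vanishings follow from (\ref{COND11}) and (\ref{COND21}) because $C_r$ is a successive extension of the $A_j$ --- so you have reproduced its argument, thin spot included; but your explicit bookkeeping claim ``(\ref{COND21}) for the remaining terms'' is false for the second vanishing. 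In the situations where the proposition is actually used (Proposition~\ref{ladderunique}, \S\ref{main-descent}) the additional groups $\mathrm{Hom}(A_j,A_i[k])$, $j<i$, $k\geq 1$, do vanish over $k_o$ by the parity/purity statement of Proposition~\ref{Ext} together with the sequence (\ref{BBD}), which is what really forces $\mathrm{Hom}(C_{i-1},A_i)=0$ there; a self-contained proof of the proposition as stated would require either adding that vanishing to the hypotheses or an argument for the uniqueness of $C_i$ that avoids $\mathrm{Hom}(C_{i-1},A_i)=0$.
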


\begin{proof}
The proof goes by induction on $0\leq r <m$. If $r=0$, this is obvious. Suppose that we have proved the proposition up to the rank $r$. Therefore we have the complex $C_r$. By Lemma \ref{DIAGLEM} there exists a unique map $d_{r+1}$ such that the following diagram commutes
$$
\xymatrix{&C_r\ar[d]^{\alpha_r}&C_{r-1}\ar[d]^{\alpha_{r-1}}\ar[l]&\ar[l]\cdots\\
A_{r+1}\ar[ru]^{d_{r+1}}\ar[r]^{\partial_{r+1}}&A_{r}\ar[r]^{\partial_r}\ar[ru]^{d_r}&A_{r-1}\ar[r]\ar[ru]&\cdots}.
$$
We complete $d_{r+1}$ into a distinguished triangle
$$
\xymatrix{C_r[-1]\ar[r]&C_{r+1}\ar[r]&A_{r+1}\ar[r]^{d_{r+1}}& C_r}.
$$
Now we notice that
$$
\mathrm{Hom}(A_{r+1},C_r[-1])=0,\hspace{.5cm}\text{ and }\hspace{.5cm}\mathrm{Hom}(C_r,A_{r+1})=0.
$$
Indeed, the complexes $C_r$ being successive extensions of the complexes $A_i$, with $i=0,\dots,r$, this follows from the assumptions (\ref{COND11}) and (\ref{COND21}) .

By Lemma \ref{lem0}, such a $C_{r+1}$ is thus unique (up to a unique isomorphism).
\end{proof}

\subsection{Stacks and sheaves : notation and convention}

Let $k$ be an algebraic closure of a finite field. In these notes, unless specified, our stacks are $k$-algebraic stacks of finite type and quotient stacks are denoted by $[X/G]$ if $G$ is an agebraic group over $k$ acting a $k$-scheme $X$. If $X={\rm Spec}(k)$ we put $\B(G)=[X/G]$.
\bigskip

 For a morphism $f:\calX\rightarrow\calY$ of stacks, we have the usual functors 
 
 $$
 f_*=Rf_*:\dcbp(\calX)\rightarrow\dcbp(\calY),\hspace{1cm} f_!=Rf_!:\dcbm(\calX)\rightarrow\dcbm(\calY),$$
$$
 f^*, f^!:\dcbo^\bullet(\calY)\rightarrow\dcbo^\bullet(\calX),\hspace{.5cm}(\text{with } \bullet\in\{\emptyset,-,+,b\})
 $$
 between derived categories of constructible $\Q$-sheaves (see \cite{LO2}).  
 
 \begin{remark} The fact that $f^!$ preserves $-,+,b$ follows from from our assumption that our stacks are of finite type over a field. Indeed, the statement reduces to the case where $\calX$ and $\calY$ are schemes of finite type over a field which is known  \cite[Corollary 2.9]{SGA}. 
  \end{remark}

 We will use freely the properties of these functors (adjunction, projection formula, base change,...). When there is no ambiguity we will sometimes write $K|_\calX$ instead of $f^*(K)$.
 \bigskip

 \begin{remark} (i) Notice that if $d\in\mathbb{Z}$ is  the dimension of a fiber of $f$ whose absolute value is maximal, then
 
 $$
 f_!:\dcbo^{]-\infty,n]}(\calX)\rightarrow \dcbo^{]-\infty,n+2d]}(\calY).
 $$
 For instance  
 $$
 H_c^i([X/G],\Q)=0\hspace{.5cm} \text{ if }\hspace{.5cm}i> 2({\rm dim}\, X-{\rm dim}\, G).
 $$

\noindent  (ii) If $f$ is representable  then $f_!$  induces
 $$
 f_!:\dcb(\calX)\rightarrow\dcb(\calY).
 $$
 
 \noindent (iii) If $f$ is smooth with fibers of pure relative dimension $d$, then \cite[9.1.2]{LO2}
 
 $$
 f^!=f^*[2d](d).
 $$
 \label{f!}\end{remark}

\bigskip

Except in some rare occasions, we will only need the category $\dcb$.

\bigskip

\noindent We will denote by $\overline{\mathbb{Q}}_{\ell,\, \calX}$ the constant sheaf on $\calX$. If there is no ambiguities, we will sometimes denote it simply by $\Q$ to alleviate the notation.
\bigskip

For two $\calY$-stacks $\calX$ and $\calX'$, we define the external tensor product of $K\in\dcbm(\calX)$ and $K'\in\dcbm(\calX')$ above $\calY$ as
$$
K\boxtimes_\calY K':=\pr_1^*K\otimes\pr_2^*K'\in\dcbm(\calX\times_\calY\calX').
$$
where $\pr_1$ and $\pr_2$ are the two projections.
\bigskip

We consider the auto-dual perversity $p$ and we denote by $\calM(\calX)$ the full subcategory of $\dcb(\calX)$ of perverse sheaves on $\calX$. Then if $\calX$ is an equidimensional stack with smooth dense open substack $\calU$, the intersection cohomology complex on $\calX$ with coefficient in a local system $\mathcal{E}$ on $U$ is denoted by $\IC_\calX(\mathcal{E})$ ; its restriction to $\calU$ is $\mathcal{E}$. If $\mathcal{E}=\Q$ we will simply write $\IC_\calX$ instead of $\IC_\calX(\Q)$. Recall that $\IC_\calX(\mathcal{E})[{\rm dim}\,\calX]\in\calM(\calX)$ is  the image of
$$
{^p}\calH^0(j_!\mathcal{E}[{\rm dim}\,\calX])\rightarrow {^p}\calH^0(j_*\mathcal{E}[{\rm dim}\,\calX]),
$$
where $j:\calU\rightarrow\calX$ is the inclusion. Recall also that if $\calX'\rightarrow\calX$ is a small resolution of singularities (representable, proper, birational, $\calX'$ smooth), then $f_*\Q=\IC_{\calX}$.

 If $D_\calX$ denotes the Verdier dual, then $D_\calX(\IC_\calX(\mathcal{E}))=\IC_\calX(\mathcal{E}^\vee)[2\mathrm{dim}(\calX)](\mathrm{dim}(\calX))$ where $\mathcal{E}^\vee$ denotes the dual local system.
\bigskip

\begin{proposition}
Let $\calX$ be an equidimensional algebraic stack and let $\calU$ be a dense open smooth substack of $\calX$. Suppose that we have a short exact sequence
$$
\xymatrix{0\ar[r]&A'\ar[r]&A\ar[r]&A''\ar[r]&0}
$$
in the category of perverse sheaves on $\calX$. If both $A'$ and $A''$ are the intermediate extension of their restriction to $\calU$, then $A$ is also the intermediate extension of its restriction to $\calU$.
\label{IE}\end{proposition}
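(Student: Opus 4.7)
The plan is to use the standard characterization of the intermediate extension (valid for algebraic stacks as it is for schemes): if $j\colon \calU\hookrightarrow\calX$ denotes the open immersion with closed complement $i\colon\calZ\hookrightarrow\calX$, then a perverse sheaf $A\in\calM(\calX)$ satisfies $A\simeq j_{!*}(A|_\calU)$ if and only if $A$ admits no nonzero perverse subsheaf supported on $\calZ$ and no nonzero perverse quotient sheaf supported on $\calZ$. The assumption on $A'$ and $A''$ will be used in exactly these two forms.

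First I would note that, since $j^*$ is $t$-exact, the given short exact sequence restricts to a short exact sequence $0\to A'|_\calU\to A|_\calU\to A''|_\calU\to 0$ in $\calM(\calU)$, so $A|_\calU$ is a well-defined perverse sheaf on $\calU$ and it suffices to verify the two ``no sub / no quotient supported on $\calZ$'' conditions for $A$.

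Next, for the ``no subsheaf'' condition, I would take any perverse subsheaf $S\hookrightarrow A$ with $S$ supported on $\calZ$ and consider the composition $S\hookrightarrow A\twoheadrightarrow A''$. Its image is a perverse subsheaf of $A''$ supported on $\calZ$, and by hypothesis $A''=j_{!*}(A''|_\calU)$ has no such subsheaf, so the composition vanishes. Therefore $S$ factors through the kernel $A'\hookrightarrow A$, giving a perverse subsheaf of $A'$ supported on $\calZ$; again by hypothesis this must be zero. Dually, for any perverse quotient $A\twoheadrightarrow Q$ with $Q$ supported on $\calZ$, the composition $A'\hookrightarrow A\twoheadrightarrow Q$ is a quotient of $A'$ supported on $\calZ$, hence zero, so the map factors through $A''$ and defines a quotient of $A''$ supported on $\calZ$, which must vanish.

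Combining these two checks with the characterization quoted above shows that $A=j_{!*}(A|_\calU)$, which is precisely the conclusion. There is no real obstacle: once one has the clean characterization of the intermediate extension in terms of vanishing of sub/quotient perverse sheaves supported on the complement, the argument is the standard ``both horns of an extension have property $P$, hence so does the middle term''. The only point that deserves care is that this characterization, together with the exactness of $j^*$ on perverse sheaves, is available in the context of algebraic stacks, which is provided by the formalism of \cite{LO1,LO2}.
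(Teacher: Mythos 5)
Your proof is correct, and it takes a different route from the paper's. You invoke the standard characterization (\cite[Corollaire~1.4.25]{BBD}, available for stacks via the formalism of \cite{LO1,LO2}): $A\simeq j_{!*}(A|_\calU)$ if and only if $A$ has no nonzero perverse subobject and no nonzero perverse quotient supported on the closed complement $\calZ$, and then you run the usual Serre-subcategory argument (perverse sheaves killed by the exact functor $j^*$ are stable under subobjects, quotients and extensions) to pass the two vanishing conditions from $A'$ and $A''$ to the extension $A$. The paper argues differently: it never invokes that characterization, but instead exhibits $j_{!*}j^*A$ directly as a subquotient of $A$ via the maps ${^p}\calH^0 j_!j^*A\twoheadrightarrow j_{!*}j^*A\hookrightarrow {^p}\calH^0 j_*j^*A$ factoring through $A$, giving $\mathrm{length}(j_{!*}j^*A)\leq\mathrm{length}(A)$, and then uses the right exactness of ${^p}\calH^0 j_!j^*$, the left exactness of ${^p}\calH^0 j_*j^*$ and the hypotheses $A'=j_{!*}j^*A'$, $A''=j_{!*}j^*A''$ to get the reverse inequality $\mathrm{length}(j_{!*}j^*A)\geq\mathrm{length}(A')+\mathrm{length}(A'')=\mathrm{length}(A)$, forcing equality. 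Your argument is shorter and more conceptual, at the cost of quoting the sub/quotient characterization as a black box; the paper's length-counting argument is more self-contained, using only the recollement exactness properties and additivity of length. Both are complete proofs.
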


\begin{proof}
Let $j$ be the inclusion of $\calU$ in $\calX$. From $j_!j^*\rightarrow 1\rightarrow j_*j^*$ and the fact that $A$ is a perverse sheaf we have a commutative diagram
$$
\xymatrix{&A\ar[rd]&\\
{^p}\calH^0j_!j^*A\ar[ru]\ar@{->>}[rd]&&{^p}\calH^0j_*j^*A\\
&j_{!*}j^*A\ar@{^{(}->}[ru]&}
$$
from which we can identify $j_{!*}j^*A$ as a subquotient of $A$. In particular
$$
\mathrm{length}(j_{!*}j^*A)\leq \mathrm{length}(A).
$$
It thus enough to prove that
\begin{equation}
\mathrm{length}(j_{!*}j^*A)\geq \mathrm{length}(A).
\label{ineq}\end{equation}

We have the commutative diagram of perverse sheaves
$$
\xymatrix{&{^p}\calH^0j_!j^*A'\ar[r]\ar@{->>}[d]&{^p}\calH^0j_!j^*A\ar@{->>}[d]\ar[r]&{^p}\calH^0j_!j^*A''\ar[r]\ar@{->>}[d]&0\\
0\ar[r]&A'=j_{!*}j^*A'\ar@{^{(}->}[d]\ar[r]&j_{!*}j^*A\ar[r]\ar@{^{(}->}[d]&A''=j_{!*}j^*A''\ar[r]\ar@{^{(}->}[d]&0\\
0\ar[r]&{^p}\calH^0j_*j^*A'\ar[r]&{^p}\calH^0j_*j^*A\ar[r]&{^p}\calH^0j_*j^*A''&}
$$
where the top and bottom horizontal sequences are exact. We thus deduce that the middle sequence is exact at $A'$ and $A''$ and the composition $A'\rightarrow A''$ is zero, from which we deduce the inequality (\ref{ineq}) as
$$
\mathrm{length}(A)=\mathrm{length}(A')+\mathrm{length}(A'').
$$
\end{proof}

As we assume that $k$ is the algebraic closure of a finite field and that all  our stacks are of finite type, any stack we consider in this paper will be defined over some finite subfield of $k$.
\bigskip

If $\calX$ is defined over finite subfield $k_o\subset k$, we denote by $\calX_o$ the corresponding $k_o$-structure on $\calX$ (if no confusion arises) and we denote by $\mathrm{Frob}_o:\calX\rightarrow\calX$ the induced geometric Frobenius on $\calX$.
\bigskip

Let $K,K'\in\dcbo^b(\calX)$. We recall that $H_c^i(\calX,K)$ (resp. $\mathrm{Ext}^i(K,K')=\mathrm{Hom}(K,K'[i])$) is said to be pure of weight $r$ if there exists a finite subfield $k_o$ of $k$ such that $\calX$ and $K$ (resp. $K$ and $K'$) are defined over $k_o$ and the eigenvalues of the induced automorphism $\mathrm{Frob}_o^*$ on $H_c^i(\calX,K)$ (resp. $\mathrm{Ext}^i(K,K')$) are algebraic numbers whose complex conjugates are all of absolute value $|k_o|^{r/2}$. It is well-known that this notion of weight does not depend on the choice of $k_o$. In particular the Tate twist $\Q(i)$ makes sense over $k$. 
\bigskip

Let $\calX_o$ be a $k_o$-structure on $\calX$ and $K_o, K'_o\in\dcbo^b(\calX_o)$, and let us denote by $K$ and $K'$ the induced complexes in $\dcb(\calX)$. Then the following sequence \cite[(5.1.2.5)]{BBD}
\begin{equation}
\xymatrix{0\ar[r]&\mathrm{Ext}^{i-1}(K,K')_{\mathrm{Frob}_o}\ar[r]&\mathrm{Ext}^i(K_o,K'_o)\ar[r]&\mathrm{Ext}^i(K,K')^{\mathrm{Frob}_o}\ar[r]&0}
\label{BBD}\end{equation}
is exact for all $i$.
\bigskip

Notice that if $\mathrm{Ext}^j(K,K')=0$ for odd values of $j$, then it follows from the above exact sequence that for all $i$
$$
\mathrm{Ext}^{2i}(K_o,K'_o)\rightarrow\mathrm{Ext}^{2i}(K,K')^{\mathrm{Frob}_o}.
$$
is an isomorphism.

\subsection{Cohomology of $\B(T)$}

Let $T$ be a rank $d$ torus over $k$. Recall that for any $k$-scheme $S$, the category $\B(T)(S)$ is the category of $T$-torsors over $S$ and the algebraic stack $\B(T)$ is of dimension $-d$.
\bigskip

The cohomology $H^\bullet(\B(T),\Q)$ is concentrated in non-negative even degrees  and the morphism

$$
c_{\rm can}:X^*(T)\rightarrow H^2(\B(T),\Q)(1)
$$
given by Chern classes induces an isomorphism

$$
X^*(T)\otimes \Q(-1)\simeq H^2(\B(T),\Q).
$$
As $\Q$-algebras, $H^{2\bullet}(\B(T),\Q)$ is isomorphic to ${\rm Sym}^\bullet\big(X^*(T)\otimes \Q(-1)\big)$.
\bigskip

By duality,  $H_c^\bullet(\B(T),\Q)$ is thus concentrated in even degrees $\leq -2\,{\rm dim}(T)$ and $H_c^{2i}(\B(T),\Q)$ is pure of weight $2i$.

\subsection{$W$-equivariant complexes}\label{W-equivariant}

Let $W$ be a finite group and a stack $\calX$. An action of $W$ on a complex $K\in\dcb(\calX)$ is a group homomorphism

$$
\theta: W\longrightarrow {\rm Aut}(K).
$$ 

In this case we define the $W$-invariant part $K^W\rightarrow K$ in $\dcb(\calX)$ of $(K,\theta)$ as follows.
\bigskip

Notice first that in any additive category, if for some morphism $e:A\rightarrow A$, there exists two morphisms $u:A\rightarrow A'$ and $v:A'\rightarrow A$ such that $v\circ u=e$ and $u\circ v=1_{K'}$ (in which case $e$ is an idempotent which is said to be \emph{split}), then $1-e$ admits a kernel which is $v$ and a cokernel which is $u$. In particular, $(A',u,v)$ is unique up to a unique isomorphism and we call it the \emph{splitting} of $e$.
\bigskip

Since $\dcb(\calX)$ is a triangulated category with bounded $t$-structure, by the main theorem of \cite{Le}, every idempotent element $e$ of $\mathrm{End}(K)$ splits. Considering the indempotent
\begin{equation}
e=e_K:=\frac{1}{|W|}\sum_{w\in W}\theta(w)\in\mathrm{End}(K),
\label{idempotent}\end{equation}
we define $K^W\rightarrow K$ as the kernel of $1-e$.
\bigskip

A \emph{$W$-torsor} is a morphism of stacks $\pi:\calX\rightarrow\overline{\calX}$ that fits to a cartesian diagram

\begin{equation}
\xymatrix{\calX\ar[r]\ar[d]_\pi&{\rm Spec}(k)\ar[d]^{\pi_o}\\
\overline{\calX}\ar[r]&\B(W)}
\label{BW}\end{equation}
\begin{remark} (1) A $W$-torsor $\pi:\calX\rightarrow\overline{\calX}$ is thus finite \'etale representable and for any scheme $S$ and morphism $S\rightarrow\calY$, the projection

$$
S\times_{\overline{\calX}}\calX\rightarrow S
$$
has a natural structure of $W$-torsor (between schemes).

\noindent (2) If $\pi:\calX\rightarrow\overline{\calX}$ is a $W$-torsor then $\calX$ is equipped with a right action of $W$ in the sense of \cite{Romagny} and conversely from right action of $W$ on $\calX$ we get a $W$-torsor by taking the quotient morphism of $\calX$ by $W$. However, although this is implicit, we will not use the definition of group actions on stacks nor the notion of quotient of stacks by group action.
\end{remark}
\bigskip

An $\ell$-adic sheaf on $\B(W)$ is a vector space equipped with a right action of $W$ and 

$$
(\pi_o)_*\Q=\Q[W]
$$
for right multiplication of $W$ on itself.  The left multiplication corresponds to the Galois action. We thus have a decomposition

$$
(\pi_o)_*\Q=\bigoplus_\chi V_\chi\otimes \mathcal{L}_{o,\chi}
$$
where the sum is over the irreducible $\Q$-characters of $W$, $V_\chi$ is a $W$-module affording the character $\chi$ and $\mathcal{L}_{o,\chi}$ is the irreducible smooth $\ell$-adic sheaf on $\B(W)$ corresponding to $V_\chi^*$.

By base change from Diagram (\ref{BW}) we get the analogous decomposition 

\begin{equation}
\pi_*\Q=\bigoplus_\chi V_\chi\otimes \mathcal{L}_\chi
\label{decompofor}\end{equation}
Notice that $\mathcal{L}_\chi$ may not be irreducible and that $(\pi_*\Q)^W=\Q$.
\bigskip

Given a $W$-torsor $\pi:\calX\rightarrow\overline{\calX}$, we denote by $\dcb(\calX,W)$ the subcategory of $\dcb(\calX)$ whose objects are isomorphic to objects of the form $\pi^*(\overline{K})$ with $\overline{K}\in\dcb(\overline{\calX})$ and morphisms are given by 

$$
{\rm Hom}_{\dcb(\calX,W)}(\pi^*\overline{A},\pi^*\overline{B}):=\pi^*{\rm Hom}(\overline{A},\overline{B})\simeq {\rm Hom}(\overline{A},\overline{B}).
$$
We call $\dcb(\calX,W)$ the category of \emph{$W$-equivariant complexes} on $\calX$ (with respect to $\pi$). 
\bigskip

\begin{remark}\label{rem-inverse}The inverse functor of $\pi^*:\dcb(\overline{\calX})\rightarrow\dcb(\calX,W)$ is given by 

$$
\dcb(\calX,W)\rightarrow\dcb(\overline{\calX}),\hspace{.5cm}K\mapsto (\pi_*K)^W.
$$
Indeed this follows from the above discussion and 

$$
\pi_*\pi^*\overline{K}=(\pi_*\Q)\otimes\overline{K}
$$
which is a consequence of the projection formula.
\end{remark}

\bigskip

Given two $W$-equivariant complexes $A=\pi^*\overline{A}$ and $B=\pi^*\overline{B}$ in $\dcb(\calX)$, we define an action of $W$ on ${\rm Hom}_{\dcb(\calX)}(A,B)$ as follows. 

We have

\begin{align*}
{\rm Hom}_{\dcb(\calX)}(A,B)&={\rm Hom}_{\dcb(\calX)}(\pi^*\overline{A},\pi^*\overline{B})\\
&={\rm Hom}_{\dcb(\overline{\calX})}(\overline{A},\pi_*\pi^*\overline{B})\\
&= \bigoplus_\chi V_\chi\otimes{\rm Hom}_{\dcb(\overline{\calX})}\left(\overline{A}, \mathcal{L}_\chi\otimes\overline{B}\right).
\end{align*}

The action of $W$ on the $V_\chi$ defines thus an action of $W$ on ${\rm Hom}_{\dcb(\calX)}(A,B)$ and we have

$$
{\rm Hom}_{\dcb(\calX)}(A,B)^W={\rm Hom}_{\dcb(\calX,W)}(A,B).
$$
\bigskip

\begin{remark}Our definition of $W$-equivariant complexes is consistent with the usual one. If $W$ acts (on the right) on a scheme $X$, then following \cite[III, 15]{KW}, a $W$-equivariant complex on $X$ is a pair $(K,\theta)$ with $K\in\dcb(X)$ and $\theta=(\theta_w)_{w\in W}$ a collection of isomorphisms
$$
\theta_w:w^*(K)\rightarrow K
$$
such that

(i) $\theta_{ww'}=\theta_w\circ w^*(\theta_{w'})$ for all $w,w'\in W$, and

(ii) $\theta_1=1_K$,

\noindent where $1_K:K\rightarrow K$ denotes the identity morphism. 

Moreover, if $(K,\theta),(K',\theta')\in \dcb(X,W)$, then the group $W$ acts (on the left) on ${\rm Hom}_{\dcb(X)}(K,K')$ as 

$$
w\cdot f=\theta'_w\circ w^*(f)\circ(\theta_w)^{-1}
$$
for all $w\in W$ and $f\in {\rm Hom}(K,K')$, and
$$
{\rm Hom}_{\dcb(X,W)}(K,K'):={\rm Hom}_{\dcb(X)}(K,K')^W.
$$
\end{remark}
\bigskip

\noindent A commutative diagram

$$
\xymatrix{\calX\ar[rr]^f\ar[d]^\pi&&\calX'\ar[d]^{\pi'}\\
\overline{\calX}\ar[rr]&&\overline{\calX}'}
$$
where $\pi$ and $\pi'$ are $W$-torsors, induces a functor  $f^*:\dcb(\calX',W)\rightarrow\dcb(\calX,W)$,  and  if moreover the above diagram is cartesian and $f$ is representable  we also get a funtor $f_!:\dcb(\calX,W)\rightarrow\dcb(\calX',W)$. Both are  compatible with the usual functors $f^*$ and $f_!$ where we forget the actions of $W$.
\bigskip

\begin{remark} Notice that if $K$ is $W$-equivariant, then its Postnikov diagram $\Lambda(K)$ is  $W$-equivariant, i.e., the vertices and the arrows of the diagram are $W$-equivariant. 
\label{W-Hom}\end{remark}
\bigskip

In the next two following lemmas we assume given a $W$-torsor

$$
\pi:\calX\rightarrow\overline{\calX}.
$$

\begin{lemma}
Assume that we have a distinguished triangle
$$
\xymatrix{C'\ar[r]^f&\pi^*\overline{A}\ar[r]^d&\pi^*\overline{C}\ar[r]^-h&C'[1]\ar[r]&}
$$
in $\dcb(\calX)$ and that $d$ is $W$-equivariant. Then there exists a distinguished triangle
\begin{equation}
\xymatrix{\overline{C}'\ar[r]^{\overline{f}}&\overline{A}\ar[r]^{\overline{d}}&\overline{C}\ar[r]^-{\overline{h}}&\overline{C}'[1]}
\label{DT}\end{equation}
in $\dcb(\overline{\calX})$ and an isomorphism of triangles
$$
\xymatrix{C'\ar[rr]^f\ar[d]^-s&&A\ar[rr]^d\ar@{=}[d]&&C\ar[rr]^h\ar@{=}[d]&&C'[1]\ar[d]^-{s[1]}\\
\pi^*(\overline{C}')\ar[rr]^{\pi^*(\overline{f})}&&\pi^*(\overline{A})\ar[rr]^{\pi^*(\overline{d})}&&\pi^*(\overline{C})\ar[rr]^{\pi^*(\overline{h})}&&\pi^*(\overline{C}')[1]}
$$

If moreover we assume that
$$
\mathrm{Hom}(A,C[-1])=0,\hspace{.5cm}\text{ and }\hspace{.5cm}\mathrm{Hom}(C,A)=0
$$
then the triangle (\ref{DT}) is unique (up to a unique isomorphism) and the morphism $s$ is unique.
\label{DIAGLEM2}\end{lemma}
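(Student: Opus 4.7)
The plan is to transport the map $d$ to $\overline{\calX}$ using the equivalence $\pi_1^*$ of Lemma \ref{descentlem}, complete there, and then use TR3 plus Lemma \ref{lem0} to handle existence and uniqueness. Concretely, because $d:\pi^*\overline{A}\to\pi^*\overline{C}$ is $W$-equivariant for the canonical structures, Lemma \ref{descentlem} produces a unique $\overline{d}:\overline{A}\to\overline{C}$ with $\pi^*\overline{d}=d$. I would then choose any distinguished triangle $\overline{C}'\xrightarrow{\overline{f}}\overline{A}\xrightarrow{\overline{d}}\overline{C}\xrightarrow{\overline{h}}\overline{C}'[1]$ completing $\overline{d}$ in $\dcb(\overline{\calX})$ and pull it back along $\pi^*$, obtaining a distinguished triangle in $\dcb(\calX)$ whose middle arrow coincides with $d$. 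The axiom TR3 for the triangulated category $\dcb(\calX)$ then supplies a morphism $s:C'\to\pi^*\overline{C}'$ fitting into a morphism of triangles with identities on $\pi^*\overline{A}$ and $\pi^*\overline{C}$; this $s$ is automatically an isomorphism by the triangulated five-lemma.

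For the uniqueness of $s$ under the additional hypotheses, I would rotate both triangles into the form $\pi^*\overline{C}[-1]\to C'\to\pi^*\overline{A}\to\pi^*\overline{C}$ and $\pi^*\overline{C}[-1]\to\pi^*\overline{C}'\to\pi^*\overline{A}\to\pi^*\overline{C}$, and then apply Lemma \ref{lem0}, viewing $s$ as the leftmost vertical arrow with the two rightmost vertical arrows being identities. The two vanishing assumptions demanded by Lemma \ref{lem0} translate, after this rotation, to $\mathrm{Hom}(\pi^*\overline{C},\pi^*\overline{A})=0$ and $\mathrm{Hom}(\pi^*\overline{A},\pi^*\overline{C}[-1])=0$, i.e.\ precisely the hypotheses $\mathrm{Hom}(C,A)=0$ and $\mathrm{Hom}(A,C[-1])=0$ stated in the lemma. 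Hence $s$ is unique.

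For the uniqueness of the triangle (\ref{DT}) up to a unique isomorphism, I would again use Lemma \ref{lem0}, this time inside $\dcb(\overline{\calX})$. The key preliminary observation is that the proof of Lemma \ref{descentlem} yields $\mathrm{Hom}(\overline{X},\overline{Y})=\mathrm{Hom}(\pi^*\overline{X},\pi^*\overline{Y})^W$ for all $\overline{X},\overline{Y}\in\dcb(\overline{\calX})$, so the hypotheses $\mathrm{Hom}(\pi^*\overline{A},\pi^*\overline{C}[-1])=0$ and $\mathrm{Hom}(\pi^*\overline{C},\pi^*\overline{A})=0$ propagate down to $\mathrm{Hom}(\overline{A},\overline{C}[-1])=0$ and $\mathrm{Hom}(\overline{C},\overline{A})=0$. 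Given two completions $\overline{C}'$ and $\overline{C}''$ of $\overline{d}$, the same rotation-plus-Lemma \ref{lem0} argument, applied directly in $\dcb(\overline{\calX})$, then produces a unique isomorphism $\overline{C}'\to\overline{C}''$ filling in a morphism of triangles.

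The only delicate point is the bookkeeping around the rotation of the triangle so that Lemma \ref{lem0} can be invoked on the correct vertical arrow; once the diagram is set up correctly, both statements (existence and uniqueness of $s$, and uniqueness of the descended triangle) fall out mechanically from the combination of Lemmas \ref{lem0} and \ref{descentlem}. The fact that $\pi^*$ is fully faithful on the $W$-invariant part makes the transfer of the vanishing hypotheses between $\calX$ and $\overline{\calX}$ free.
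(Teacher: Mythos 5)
Your proposal is correct and follows essentially the same route as the paper: descend $d$ to $\overline{d}$ via Lemma \ref{descentlem}, complete $\overline{d}$ to a triangle in $\dcb(\overline{\calX})$, pull back along $\pi^*$ and produce $s$ by TR3 plus the five lemma, then obtain both uniqueness statements from Lemma \ref{lem0} after the rotation you describe. The only difference is that you spell out details the paper leaves implicit (the transfer of the vanishing hypotheses to $\dcb(\overline{\calX})$ via $\mathrm{Hom}(\overline{A},\overline{C}[-1])=\mathrm{Hom}(\pi^*\overline{A},\pi^*\overline{C}[-1])^W$, and the rotation bookkeeping), and these details are accurate.
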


\begin{proof}
As $d:\pi^*\overline{A}\rightarrow\pi^*\overline{C}$ is $W$-equivariant, by definition it descends to a unique morphism $\overline{d}:\overline{A}\rightarrow\overline{C}$ in $\dcb(\overline{\calX})$. We complete this morphism into a distinguished triangle
$$
\xymatrix{\overline{C}'\ar[r]^{\overline{f}}&\overline{A}\ar[r]^{\overline{d}}&\overline{C}\ar[r]^-{\overline{h}}&\overline{C}'[1]}
$$
in $\dcb(\overline{\calX})$.
\bigskip

\noindent There exists an isomorphism $s:C'\rightarrow\pi^*(\overline{C}')$ such that the following diagram commutes
$$
\xymatrix{C'\ar[rr]^f\ar[d]^-s&&A\ar[rr]^d\ar@{=}[d]&&C\ar[rr]^h\ar@{=}[d]&&C'[1]\ar[d]^-{s[1]}\\
\pi^*(\overline{C}')\ar[rr]^{\pi^*(\overline{f})}&&\pi^*(\overline{A})\ar[rr]^{\pi^*(\overline{d})}&&\pi^*(\overline{C})\ar[rr]^{\pi^*(\overline{h})}&&\pi^*(\overline{C}')[1]}
$$
The second statement follows from Lemma \ref{lem0}.
\end{proof}
\bigskip

\begin{proposition}
Assume that we have a Postnikov diagram $\Lambda$ of the form (\ref{DIAGRAM}) in $\dcb(\calX)$ such that the complex $\Lambda_b$ is the image by $\pi^*$ of a complex
\begin{equation}
\xymatrix{\overline{A}_m\ar[r]&\overline{A}_{m-1}\ar[r]&\cdots\ar[r]&\overline{A}_1\ar[r]&\overline{A}_0}
\label{ladderdescent}\end{equation}
in $\dcb(\overline{\calX})$. Suppose also that
\begin{equation}
\mathrm{Hom}(A_j,A_i)=0,\hspace{.5cm}\text{ for all } j<i,
\label{COND1}\end{equation}
and
\begin{equation}
\mathrm{Hom}(A_j,A_i[-k])=0,\hspace{.5cm}\text{ for all }j>i\text{ and } k\geq 1.
\label{COND2}\end{equation}
Then the complex (\ref{ladderdescent}) can be completed in a unique way (up to a unique isomorphism) into a Postnikov diagram $\overline{\Lambda}$ in $\dcb(\overline{\calX})$ such that
$$
\pi^*(\overline{\Lambda})\simeq\Lambda.
$$
\label{W-prop}\end{proposition}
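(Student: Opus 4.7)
The plan is to reduce the claim to two applications of Proposition \ref{laddercompletion}, the first in $\dcb(\overline{\calX})$ to manufacture $\overline{\Lambda}$, and the second back in $\dcb(\calX)$ to identify $\pi^*(\overline{\Lambda})$ with $\Lambda$. In this way, neither the individual Postnikov triangles nor any $W$-equivariant structures need to be handled term by term.

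First I would check that the vanishing hypotheses (\ref{COND1}) and (\ref{COND2}) descend to the complex (\ref{ladderdescent}) in $\dcb(\overline{\calX})$. By the full faithfulness of $\pi_1^*$ furnished by Lemma \ref{descentlem}, for all $i,j$ and $k$ one has
$$
\mathrm{Hom}_{\overline{\calX}}(\overline{A}_j, \overline{A}_i[-k]) = \mathrm{Hom}_\calX(\pi^*\overline{A}_j, \pi^*\overline{A}_i[-k])^W = \mathrm{Hom}_\calX(A_j, A_i[-k])^W,
$$
so the assumed vanishing in $\dcb(\calX)$ forces the corresponding vanishing in $\dcb(\overline{\calX})$. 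Proposition \ref{laddercompletion} then produces a Postnikov diagram $\overline{\Lambda}$ in $\dcb(\overline{\calX})$ completing (\ref{ladderdescent}), unique up to a unique isomorphism.

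Next I would apply $\pi^*$ to $\overline{\Lambda}$. The result is a Postnikov diagram in $\dcb(\calX)$ whose base is $\Lambda_b$, by the hypothesis that (\ref{ladderdescent1}) is the $\pi^*$-image of (\ref{ladderdescent}). Since (\ref{COND1}) and (\ref{COND2}) are assumed to hold in $\dcb(\calX)$, a second application of Proposition \ref{laddercompletion}, now to $\Lambda_b$ itself, supplies a unique isomorphism of Postnikov diagrams $\pi^*(\overline{\Lambda}) \simeq \Lambda$. Uniqueness of $\overline{\Lambda}$ itself (up to a unique isomorphism) is already part of the first application.

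The step I expect to require the most care is the transfer of Hom-vanishing in the first paragraph: one must make sure that Lemma \ref{descentlem} is applied with the canonical $W$-equivariant structure on each $\pi^*\overline{A}_i$, which is precisely the $W$-structure tacitly attached to the hypothesis that $\Lambda_b$ is the $\pi^*$-image of (\ref{ladderdescent}) and which is indispensable for identifying $W$-invariants correctly. Once this bookkeeping is settled, the two invocations of Proposition \ref{laddercompletion} finish the proof; there is no need to descend individual triangles inductively via Lemma \ref{DIAGLEM2}, which is how one might at first be tempted to proceed.
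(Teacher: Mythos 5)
Your argument is correct, and its first half is exactly the paper's: transfer the vanishing conditions (\ref{COND1}), (\ref{COND2}) to the $\overline{A}_i$ via Lemma \ref{descentlem} (note that mere faithfulness of $\pi_1^*$ already suffices here -- if $\mathrm{Hom}(A_j,A_i[-k])=0$ then its $W$-invariants vanish whatever equivariant structure you use, so the bookkeeping about canonical structures you worry about at the end is not actually load-bearing) and then apply Proposition \ref{laddercompletion} in $\dcb(\overline{\calX})$ to produce $\overline{\Lambda}$, unique up to unique isomorphism. Where you diverge is in identifying $\pi^*(\overline{\Lambda})$ with $\Lambda$: the paper runs an induction on the stages of the diagram, descending and comparing triangle by triangle by means of Lemma \ref{DIAGLEM2} together with the unicity of the maps $d_{r+1}$ from Lemma \ref{DIAGLEM}, whereas you observe that $\pi^*$ is triangulated, so $\pi^*(\overline{\Lambda})$ is itself a Postnikov completion of $\Lambda_b$ in $\dcb(\calX)$, and then invoke the uniqueness clause of Proposition \ref{laddercompletion} a second time (legitimately, since (\ref{COND1}) and (\ref{COND2}) are hypotheses in $\dcb(\calX)$) to get a unique isomorphism of Postnikov diagrams $\pi^*(\overline{\Lambda})\simeq\Lambda$ over the identity of the base. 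This is a genuine, and cleaner, shortcut: it buys you freedom from any stage-by-stage descent argument and from Lemma \ref{DIAGLEM2} altogether, at the price of leaning on the uniqueness statement of Proposition \ref{laddercompletion} in its full strength (any two completions of the same base are uniquely isomorphic), which is indeed what that proposition asserts and proves; the paper's inductive route essentially re-derives this uniqueness in the situation at hand rather than quoting it.
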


\begin{proof}
The conditions (\ref{COND1}) and (\ref{COND2}) imply the analogous conditions with $A_i$ replaced by $\overline{A}_i$. Therefore by Lemma \ref{laddercompletion} we can extend in a unique way the complex (\ref{ladderdescent}) into a Postnikov diagram $\overline{\Lambda}$. We prove by induction on $0\leq r <m$ that $\pi^*(\overline{\Lambda})=\Lambda$ using Lemma \ref{DIAGLEM2} and the unicity of $d_{r+1}$ (Lemma \ref{DIAGLEM}).
\end{proof}

\subsection{Cohomological correspondences}\label{cor}
\bigskip

By a cohomological correspondence $\Gamma=(\calC,N,p,q)$ we shall mean a correspondence of $S$-algebraic stacks
\begin{equation}
\xymatrix{&\calC\ar[dr]^p\ar[dl]_q&\\
\calY&&\calX}
\label{model}\end{equation}
together with a kernel $N\in\dcbm(\calC)$.
\bigskip

The correspondence $\Gamma$ comes with a functor
$$
\Res=\Res_\Gamma:\dcbm(\calX)\rightarrow\dcbm(\calY),\hspace{.5cm}A\mapsto q_!(N\otimes p^*A)
$$
which we call the \emph{restriction functor} associated to $\Gamma$, whose right adjoint is the \emph{induction functor} 

$$
\Ind=\Ind_\Gamma:\dcbp(\calY)\rightarrow\dcbp(\calX),\hspace{.5cm}B\mapsto p_*\underline{\rm Hom}(N,q^!B).
$$

\begin{lemma}
If we have a morphism of correspondences $f:(\calC,p,q)\rightarrow (\calC',p',q')$, i.e. a commutative diagram
\begin{equation}
\xymatrix{&\calC\ar@/^/[ddr]^p\ar@/_/[ddl]_q\ar[d]^f&\\
&\calC'\ar[dr]_{p'}\ar[dl]^{q'}&\\
\calY&&\calX}
\label{morcordiag}\end{equation}
then we have natural isomorphisms
$$\Res_{(\calC,N,p,q)}=\Res_{(\calC',f_! N,p',q')},\hspace{1cm}\Ind_{(\calC,N,p,q)}=\Ind_{(\calC',f_!N,p',q')}.$$
\label{morcor}\end{lemma}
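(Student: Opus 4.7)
The strategy is to establish both identities by direct manipulation, using only the standard formalism of the six functors (projection formula, functoriality, and the internal $\underline{\mathrm{Hom}}$--$f_!$ adjunction) together with the commutativity $p = p'\circ f$ and $q = q'\circ f$ encoded in diagram (\ref{morcordiag}). The second identity can alternatively be deduced from the first by uniqueness of right adjoints.

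For the restriction functor, I would compute, for any $A\in\dcb(\calX)$,
\[
\Res_{(\calC',f_!N,p',q')}(A)=q'_!\bigl(f_!N\otimes (p')^*A\bigr).
\]
The projection formula for $f$ gives $f_!N\otimes (p')^*A \simeq f_!\bigl(N\otimes f^*(p')^*A\bigr)$, and since $p=p'\circ f$ we have $f^*(p')^*=p^*$, so this equals $f_!(N\otimes p^*A)$. Applying $q'_!$ and using $q=q'\circ f$ yields $q_!(N\otimes p^*A)=\Res_{(\calC,N,p,q)}(A)$. All the isomorphisms used are natural, so this produces the desired natural isomorphism of functors.

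For the induction functor, I would compute
\[
\Ind_{(\calC',f_!N,p',q')}(B)=p'_*\,\underline{\mathrm{Hom}}\bigl(f_!N,(q')^!B\bigr).
\]
The internal-$\underline{\mathrm{Hom}}$ version of the $(f_!,f^!)$ adjunction provides a natural isomorphism $\underline{\mathrm{Hom}}(f_!N,M)\simeq f_*\,\underline{\mathrm{Hom}}(N,f^!M)$; applying it with $M=(q')^!B$ and using $f^!(q')^!=(q'\circ f)^!=q^!$ yields $\underline{\mathrm{Hom}}(f_!N,(q')^!B)\simeq f_*\,\underline{\mathrm{Hom}}(N,q^!B)$. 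Pushing forward by $p'_*$ and using $p=p'\circ f$ gives $p_*\,\underline{\mathrm{Hom}}(N,q^!B)=\Ind_{(\calC,N,p,q)}(B)$. Alternatively, once the restriction identity is established, the induction identity follows for free: $\Ind_{(\calC,N,p,q)}$ and $\Ind_{(\calC',f_!N,p',q')}$ are both right adjoint to the same functor, hence canonically isomorphic.

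There is no serious obstacle here; the lemma is a formal consequence of the six-functor formalism. The only point requiring a brief mention is that the projection formula and the $\underline{\mathrm{Hom}}$-adjunction used are known to hold in the context of $\dcb$ of constructible $\Q$-sheaves on algebraic stacks, as developed in \cite{LO1,LO2} and invoked in the conventions section.
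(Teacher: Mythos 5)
Your argument is correct and is exactly the expansion of the paper's one-line proof, which simply invokes the projection formula (your restriction computation is that formula verbatim, and your induction computation is its adjoint form, equivalently your uniqueness-of-right-adjoints remark). Nothing further is needed.
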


\begin{proof}
This is an obvious consequence of the projection formulas (see \cite[9.1.1 and 9.1.i]{LO2}).
\end{proof}
\bigskip

\begin{lemma}[Composition]
Assume that we have two cohomological correspondences $\Gamma=(\calC,N,p,q)$ and $\Gamma'=(\calC',N',p',q')$. Consider the following diagram

$$
\begin{footnotesize}
\xymatrix{&&\calZ\times_S\calX\ar@/_3pc/[dddll]_{\overline{\pr}_1}\ar@/^3pc/[dddrr]^{\overline{\pr}_2}&&\\
&&\calC'\times_\calY\calC\ar[rd]^{\pr_2}\ar[ld]_{\pr_1}\ar[u]_{(q',p)}&&\\
&\calC'\ar[rd]^{p'}\ar[ld]_{q'}&&\calC\ar[rd]^{p}\ar[ld]_{q} &\\
\calZ&&\calY&&\calX}
\end{footnotesize}$$
Then
$$
\Res_{\Gamma'}\circ\Res_\Gamma=\Res_{\Gamma'\circ\Gamma}
$$
where
$$
\Gamma'\circ\Gamma:=\left(\calZ\times_S\calX,(q',p)_!(N'\boxtimes_\calY N),\overline{\pr}_2,\overline{\pr}_1\right).
$$
\label{comp}\end{lemma}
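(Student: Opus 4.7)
The plan is to compute $\Res_{\Gamma'}\circ\Res_\Gamma(A)$ for an arbitrary $A\in\dcb(\calX)$ and transform it, by successive applications of proper base change and the projection formula, into the expression $\overline{\pr}_{1!}\bigl((q',p)_!(N'\boxtimes_\calY N)\otimes\overline{\pr}_2^*A\bigr)$ defining $\Res_{\Gamma'\circ\Gamma}(A)$. All manipulations take place on the diagram displayed in the statement, so the proof will be essentially a chain of canonical isomorphisms, and the burden is simply to verify that each rewriting uses a Cartesian square or a valid adjunction.

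First I would expand the left-hand side directly from the definitions as
$$
\Res_{\Gamma'}\Res_\Gamma(A)\;=\;q'_!\bigl(N'\otimes (p')^*\, q_!(N\otimes p^*A)\bigr).
$$
The inner square
$$
\xymatrix{\calC'\times_\calY\calC\ar[r]^-{\pr_2}\ar[d]_{\pr_1}&\calC\ar[d]^q\\ \calC'\ar[r]^-{p'}&\calY}
$$
is Cartesian by construction, so proper base change gives $(p')^*q_!\simeq\pr_{1!}\pr_2^*$. Applying this and pulling $\pr_2^*$ through the tensor product yields
$$
\Res_{\Gamma'}\Res_\Gamma(A)\;=\;q'_!\bigl(N'\otimes\pr_{1!}(\pr_2^*N\otimes\pr_2^*p^*A)\bigr).
$$

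Next I would apply the projection formula along $\pr_1$ to absorb the factor $N'$ into $\pr_{1!}$, which turns $N'$ into $\pr_1^*N'$ and produces $\pr_1^*N'\otimes\pr_2^*N=N'\boxtimes_\calY N$ as a factor. Using the identities $q'\circ\pr_1=\overline{\pr}_1\circ(q',p)$ and $p\circ\pr_2=\overline{\pr}_2\circ(q',p)$ read off from the diagram, this rewrites as
$$
\Res_{\Gamma'}\Res_\Gamma(A)\;=\;\overline{\pr}_{1!}\,(q',p)_!\bigl((N'\boxtimes_\calY N)\otimes(q',p)^*\overline{\pr}_2^*A\bigr).
$$
A final application of the projection formula, this time along $(q',p)$, extracts $\overline{\pr}_2^*A$ from under $(q',p)_!$ and gives
$$
\overline{\pr}_{1!}\bigl((q',p)_!(N'\boxtimes_\calY N)\otimes\overline{\pr}_2^*A\bigr)\;=\;\Res_{\Gamma'\circ\Gamma}(A),
$$
as required. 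Functoriality of all isomorphisms in $A$ is automatic, providing the claimed isomorphism of functors.

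The argument is formal, so there is no serious obstacle; the only point that deserves care is verifying that the square used for base change is genuinely Cartesian (it is, by the very definition of $\calC'\times_\calY\calC$) and tracking the commutations of the resulting $2$-morphisms so that the two projection formulas are applied along compatible morphisms. Since the whole calculation uses only base change and the projection formula in their standard forms for the six-functor formalism on algebraic stacks recalled in \S 2.2, no further input is needed.
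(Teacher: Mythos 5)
Your proof is correct: the paper leaves this lemma unproved, treating it as a formal consequence of base change and the projection formula, and your chain of isomorphisms — base change on the Cartesian square defining $\calC'\times_\calY\calC$, projection formula along $\pr_1$, the identities $q'\circ\pr_1=\overline{\pr}_1\circ(q',p)$ and $p\circ\pr_2=\overline{\pr}_2\circ(q',p)$, and a final projection formula along $(q',p)$ — is exactly that intended argument. One minor terminological point: the isomorphism $(p')^*q_!\simeq\pr_{1!}\pr_2^*$ does not need $q$ to be proper, since it is the general base-change isomorphism for $(-)_!$ in the Laszlo--Olsson formalism, so calling it ``proper base change'' is a slight misnomer but not a gap.
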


We keep the correspondence (\ref{model}) and we assume that it can be completed into a diagram

$$
\xymatrix{\calY\ar[d]^-\pi&&\calC\ar[rr]^p\ar[ll]_q\ar[d]^-\rho&&\calX\\
\overline{\calY}&&\overline{\calC}\ar[rru]_{\overline{p}}\ar[ll]^{\overline{q}}&&
}
$$
where $\pi$ and $\rho$ are $W$-torsors and where the square is cartesian.

If $N$ is $W$-equivariant, i.e. descends to a complex $\overline{N}$ on  $\overline{\calC}$. Then we have the following factorization

\begin{equation}
\Ind=\I\circ\pi_*,\hspace{1cm}\Res=\pi^*\circ\R
\label{fact}\end{equation}
where $\I:\dcbp(\overline{\calY})\rightarrow\dcbp(\calX)$ and $\R:\dcbm(\calX)\rightarrow\dcbm(\overline{\calY})$ are  respectively the induction and restriction functors defined from the correspondence $(\overline{\calC},\overline{N},\overline{p},\overline{q})$.
\bigskip

\begin{remark}\label{Inv}Notice that $\I$ can be computed as

$$
\I(\overline{K})={\rm Ind}(\pi^*\overline{K})^W
$$
since

\begin{align*}
\Ind(\pi^*\overline{K})&=\I\circ\pi_*\circ\pi^*(\overline{K})\\
&=\bigoplus_\chi V_\chi\otimes \I(\mathcal{L}_\chi\otimes \overline{K}),
\end{align*}
where $\pi_*\Q=\bigoplus_\chi V_\chi\otimes \mathcal{L}_\chi$ is the decomposition  indexed by the irreducible characters of $W$ (see Remark \ref{rem-inverse}). 
\end{remark}

\subsection{Reductive groups}\label{reductive}
\bigskip

For an affine connected algebraic group $H$ over $k$, we denote by $\frak{h}$ the Lie algebra of $H$. We denote by $Z(H)$ the center of $H$ and by $z(\frak{h})$ the center of $\frak{h}$. We will also denote by $\calH:=[\frak{h}/H]$ the quotient stack of $\frak{h}$ by $H$ for the adjoint action $\Ad:H\rightarrow\GL(\frak{h})$.

If moreover $H$ is reductive we denote by
$$
\frak{car}_\frak{h}=\frak{h}/\!/H:=\mathrm{Spec}(k[\frak{h}]^H)
$$
the variety of characteristic polynomials of the elements of $\frak{h}$ and we have a canonical map $\chi_\frak{h}:\mathcal{H}\rightarrow\car_\frak{h}$.

If $H$ is commutative, then $\calH\simeq\frak{h}\times\B(H)$, $\car_\frak{h}\simeq\frak{h}$ and $\chi_\frak{h}$ is the first projection.
\bigskip

From now, $G$ is a connected reductive group over $k$, $T$ is a maximal torus of $G$, $B$ a Borel subgroup of $G$ containing $T$, $U$ the unipotent radical of $B$, $N$ the normalizer $\N_G(T)$ of $T$ in $G$ and $W$ the Weyl group $N/T$. Through this paper we put
\bigskip
$$
n:=\mathrm{dim}\, T.
$$
\bigskip

We will simply use the notation $\car$ instead of $\car_\g$ and we recall that
$$
\frak{car}=\t/\!/W.
$$
We let
$$
\pi:\calT\longrightarrow\ocalT=[\t/N]
$$
be the canonical map.
\bigskip

The character group is denoted by $X^*(T)$ and the cocharacter group by $X_*(T)$.
\bigskip

When it makes sense, we use freely the subscripts $\reg$ and $\rss$ for restriction to $G$-regular or $G$-regular semisimple elements. Similarly we will use the subscript ${\rm nil}$ for restriction to nilpotent elements.
\bigskip

We assume throughout this paper that the  characteristic of $k$
is not too small so that $\t_{\reg}\neq\emptyset$  and centralizers of semisimple elements of $\g$ are Levi subgroups (of parabolic subgroups) of $G$ (see \cite[\S 2.6]{Letellier} for an explicit bound on $p$).
\bigskip

\subsection{Lusztig correspondence}

Consider the correspondence (which we call Lusztig correspondence)

\begin{equation}
\xymatrix{\calT&\calB\ar[l]_q\ar[r]^p&\calG}
\label{Ind}\end{equation}
where the arrows are induced by the inclusion $\b\subset\g$ and by the projection $\b\rightarrow\t$.
\bigskip

Consider

$$
X:=\{(x,gB)\in\g\times G/B\,|\, \Ad(g)(x)\in\b\}
$$
The group $G$ acts on $X$  by $g\cdot(x,hB)=(
\Ad(g)(x),ghB)$ and the map 

$$B\rightarrow X,\,\, x\mapsto(x,B)$$
induces an isomorphism $\calB\rightarrow\calX:=[X/G]$.
\bigskip

Under the identification $\calB\simeq\calX$, the morphism $p$ is the quotient by $G$ of the Grothendieck-Springer resolution

$$
\pr_1:X\rightarrow \g,\,\, (x,gB)\mapsto x
$$
from which we deduce the first assertion of the theorem below.

\begin{theorem} (i) The morphism $p:\calB\rightarrow\calG$  is representable, proper and semi-small.
\bigskip

\noindent (ii) We have a factorization of $p$  (Stein factorization) 
$$
\xymatrix{\calB\ar[rr]^-{(q',p)}&&\calS:=\t\times_\car\calG\ar[rr]^{\pr_2}\ar[d]\ar@{}[drr] | {\square}&&\calG\ar[d]\\
&&\t\ar[rr]&&\car}
$$
where $q'=\chi_\t\circ q:\calB\rightarrow\calT\rightarrow\t$, and $(q',p)$ is a small resolution of singularities.
\label{Steinpro}\end{theorem}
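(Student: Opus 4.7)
The plan is to reduce both parts to properties of the classical enlarged Grothendieck--Springer map. Via the isomorphism $\calB \simeq \calX = [X/G]$ recalled just before the statement, with $X = \{(x,gB) \in \g \times G/B : \Ad(g^{-1})(x) \in \b\}$, the morphism $p$ is the $G$-quotient of the first projection $\pr_1 \colon X \to \g$. Since schematicness, properness, semi-smallness, birationality and smallness can all be checked on a smooth atlas, it is enough to establish them for $\pr_1$ and for the induced map $X \to \t \times_\car \g$, viewed as $G$-equivariant morphisms of schemes.

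For part (i), schematicness is immediate because $p$ is the quotient by the same group $G$ of a morphism of schemes. Properness of $\pr_1$ follows from its factorization as the closed embedding $X \hookrightarrow \g \times G/B$ followed by the projection to $\g$, which is proper since $G/B$ is projective. Semi-smallness is the classical statement for the Grothendieck--Springer map: writing $x = x_s + x_n$ for the Jordan decomposition and $L = Z_G(x_s)$, the fiber $\pr_1^{-1}(x)$ identifies with a Springer fiber inside the flag variety of $L$, whose dimension satisfies Spaltenstein's bound, which translates into $2\dim\pr_1^{-1}(x) \leq \mathrm{codim}_\g(G\cdot x)$.

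For part (ii), I first construct $q'$: take $q \colon [\b/B] \to [\t/T]$ induced by $\b \to \b/\u = \t$ together with $B \to T$ (the adjoint action of $U$ on $\t$ being trivial), and set $q' = \chi_\t \circ q$, where $\chi_\t \colon \calT = \t\times\B(T) \to \t$ is the projection to the first factor. The Chevalley restriction theorem applied to $\b$ asserts that the Chevalley map $\g \to \car$ restricted to $\b$ factors through $\b \to \t \to \car$, and this both produces the factorization of $(q',p)$ through $\t\times_\car\calG$ and gives the commutative diagram with cartesian right square. Smoothness of $\calB$ follows from $X \simeq G\times^B\b$ being the total space of a vector bundle over $G/B$. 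To see that $(q',p)$ is birational it suffices to check it is an isomorphism over $\t_\reg \times_\car \g_\rss$, which reduces to the elementary fact that a regular semisimple element of $\b$ is $U$-conjugate to a unique element of $\t$. Smallness is the classical theorem of Grothendieck on the enlarged Springer resolution, and then the identification $(q',p)_*\Q = \IC_{\t\times_\car\g}$ follows from the general property $f_*\Q = \IC$ for a small resolution $f$, recalled in the preliminaries.

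The main obstacle is the smallness verification in (ii). Concretely, one stratifies $\t\times_\car\g$ by the $G$-conjugacy class $\lambda$ of the centralizer of the semisimple part, identifies the fiber of $(q',p)$ over a point in the $\lambda$-stratum with a Springer fiber in the flag variety of the Levi $L_\lambda$, and then verifies the strict inequality $2\dim(\text{fiber}) < \mathrm{codim}(\text{stratum in }\t\times_\car\g)$ for every $\lambda$ labelling a non-open stratum. This is a precise but essentially standard dimension count comparing the dimension of a Springer fiber in $L_\lambda$ to the codimension of the $\lambda$-stratum expressed through the semisimple rank of $L_\lambda$.
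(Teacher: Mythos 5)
Your proposal handles parts of the statement the paper treats as classical (properness, semi-smallness of $p$, birationality of $(q',p)$ over the regular semisimple locus), but it misses the point that the paper's proof is actually about: the \emph{Stein factorization} claim. For the diagram to be the Stein factorization of $p$ you must show $(q',p)_*\mathcal{O}_{\calB}=\mathcal{O}_{\calS}$, i.e.\ that $\calS=\t\times_\car\calG$ is \emph{normal} (given that $\t\rightarrow\car$ is finite, and that $(q',p)$ is proper and birational with smooth source, $(q',p)_*\mathcal{O}_\calB$ is the normalization of $\mathcal{O}_\calS$). Nothing in your argument addresses this, and it is not automatic: the fibre product $\t\times_\car\g$ could a priori be non-normal or even non-reduced. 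The paper proves it by Serre's criterion: $(R_1)$ holds because $(q',p)$ is an isomorphism above $\t_\reg\times_\car\g_\rss$ and the complement of this locus in $\calS$ has codimension at least $2$; and $(S_2)$ holds because $\t$ is a (finite flat) complete intersection over $\car$, so by base change $\calS$ is a complete intersection over the smooth stack $\calG$, hence Cohen--Macaulay. This normality argument is the entire content of the paper's proof of (ii), and without it the "Stein factorization'' assertion (and the fact that $\calS$ is a variety being resolved) is unproved.

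There is also a problem with your smallness count as formulated. If you stratify $\t\times_\car\g$ only by the class $\lambda$ of the centralizer $L_\lambda$ of the semisimple part, the codimension of the $\lambda$-stratum is the semisimple rank of $L_\lambda$, while the fibres over that stratum have dimension up to $\dim L_\lambda/(B\cap L_\lambda)$ (attained when the nilpotent part is $0$); the inequality $2\dim(\text{fibre})<\mathrm{codim}$ then fails already for $G=\SL_2$ and $\lambda=[G]$, where the stratum $\{0\}\times\g_\nil$ has codimension $1$ but the fibre over $(0,0)$ is $\mathbb{P}^1$. The correct (standard) count refines the stratification by the nilpotent orbit $\mathcal{O}\subset\frak{l}_\nil$ of the nilpotent part: over such a stratum the fibre is the Springer fibre of $x_n$ in $L_\lambda$, of dimension $\tfrac12(\dim\frak{l}-\dim\mathcal{O}-\dim\t)$, the codimension of the stratum is $\dim\frak{l}-\dim\mathcal{O}-\dim z(\frak{l})$, and the difference is exactly the semisimple rank of $L_\lambda$, which is $>0$ unless $L_\lambda=T$. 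So your "essentially standard'' step is fixable, but as written the inequality you propose to check is false; and in any case the missing normality/Stein argument is the genuine gap relative to what the theorem asserts and what the paper proves.
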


\begin{proof}
Above $\calS_{\rss}:=\t\times_\car\calG_{\rss}$, $(q',p)$ is an isomorphism and so $\calG_{\rss}$ is smooth. The complementary of $\calG_{\rss}$ in $\calS$ is of codimension at least $2$ therefore $\calS$ satisfies the condition $(R_1)$ of Serres's criterion for normality. Moreover $\t$ is a complete intersection over $\car$ and so by base change the same is true for $\calS$ over $\calG$. As $\calG$ is smooth, the stack $\calS$ satisfies the condition $(S_2)$ of Serres's criterion. Therefore $\calS$ is normal from which we deduce the proposition.
\end{proof}

\subsection{Lusztig induction and restriction}\label{paraind}

The morphism $p$ is representable, proper and $q$ is smooth of pure dimension $0$. We thus have $p_*=p_!$ and $q^*=q^!$.
The Lusztig induction and restriction functors \cite[(7.1.7)]{Lu} are the induction and restriction defined from the correspondence (\ref{Ind}) with the constant sheaf as a kernel, i.e. 
$$
\Ind:\dcb(\calT)\rightarrow\dcb(\calG),\hspace{.5cm}K\mapsto p_*q^!(K),
$$
$$
\Res:\dcb(\calG)\rightarrow\dcb(\calT),\hspace{.5cm}K\mapsto q_!p^*(K).
$$
\begin{remark}Notice that $\Res$ is well-defined as $q_!$ maps bounded complexes to bounded complexes. To see that we consider the commutative triangle

$$
\xymatrix{\calT&&\calB\ar[ll]_q\\
&&[\b/T]\ar[u]_h\ar[llu]^a}
$$
Since $h$ is an affine fibration, the  adjunction morphism

$$
h_!h^*\rightarrow 1
$$
is an isomorphism and so we conclude by noticing that $a$ is representable.

\end{remark}

\bigskip

By the main result of \cite{BY}, the functors $\Ind$  and $\Res$ maps perverse sheaves to perverse sheaves.

Consider the factorization
\begin{equation}
\xymatrix{&&\calB\ar[dll]_q\ar[drr]^p\ar[d]^{(q,p)}&&\\
\calT&&\calT\times_{\frak{car}}\calG\ar[ll]_{\pr_\calT}\ar[rr]^{\pr_\calG}&&\calG}
\label{Indbis}\end{equation}
where the map $\calT\rightarrow\car$ is either the composition $\calT\rightarrow\calG\rightarrow\car$ or the composition $\calT\rightarrow\t\rightarrow\car$.
\bigskip

By \S\ref{cor}, we have

\begin{align*}
\Res(K)&={\pr_\calT}_!\left(\pr^*_\calG(K)\otimes (q,p)_!\Q\right),\\
&\\
\Ind(K)&={\pr_\calG}_*\,\underline{\rm Hom}\left((q,p)_!\Q,\pr_\calT^!(K)\right)\\
&={\pr_\calG}_!\left(\pr^*_\calT(K)\otimes (q,p)_!\Q\right).
\end{align*}
The last identity follows from the fact that $q^!=q^*$ and $p_!=p_*$ since we can then regard $\Ind$ as the restriction functor associated to the correspondence $(\calB,q,p)$.
\bigskip

Notice also that $(q,p)_!\Q\in\dcb(\calT\times_\car\calG)$.

\subsection{Induction and restriction for perverse sheaves}\label{perInd}

As we say in the previous section, the two functors $\Ind$ and $\Res$ are $t$-exact but, as we will see later, the kernel $(q,p)_!\Q$ is not $W$-equivariant. In this section we introduce slightly modified functors ${^p}\underline{\Ind}$ and ${^p}\underline{\Res}$ defined from a kernel which is naturally $W$-equivariant.
\bigskip

We consider the following commutative diagram

\begin{equation}
\xymatrix{\calT\ar[d]_{s}&&\calB\ar[ll]_q\ar[rrd]^p\ar[dll]_{q'}\ar[d]^{(q',p)}&&\\
\t&&\calS=\t\times_\car\calG\ar[ll]^-{\pr_1}\ar[rr]_-{\pr_2}&&\calG}
\label{DIAGPER}\end{equation}
where $s:\calT\simeq\t\times\B(T)\rightarrow\t$ is the projection on $\t$. Notice that $s$ is not representable.
\bigskip

Consider the induction functor 

$$
\underline{\Ind}:=\Ind\circ s^![n](n):\dcbp(\t)\rightarrow\dcbp(\calG),\,\,\,\,K\mapsto p_*q'{^!}(K)[n](n).
$$
Since the functor  $s^![n](n):\calM(\t)\rightarrow\calM(\calT)$ is an equivalence of categories with inverse functor ${^p}\calH^0\circ \left(s_![-n](-n)\right)$ and since $\Ind$ maps perverse sheaves to perverse sheaves (see \S \ref{paraind}), the functor $\underline{\Ind}$ preserves perversity and we denote by ${^p}\underline{\Ind}$ the induced functor on perverse sheaves, i.e. 
$$
{^p}\underline{\Ind}:=\Ind\circ s^![n](n):\calM(\t)\rightarrow\calM(\calG),\,\,\,\,K\mapsto p_*q'{^!}(K)[n](n).
$$
The functor $\underline{\Ind}$ admits a left adjoint 

$$
\underline{\Res}:=s_!\circ\Res[-n](-n):\dcbm(\calG)\rightarrow\dcbm(\t),\,\,\, K\mapsto q'_!p^*(K)[-n](-n)
$$
which  is right $t$-exact but not left $t$-exact (unlike $\underline{\Ind}$). 
\bigskip

Since $\Res$ preserves perversity,  for $K\in\calM(\calG)$ we have

$$
\underline{\Res}(K)=s_!s^!(K')=K'\otimes R\Gamma_c(\B(T),\Q)[-2n](-n)
$$
where $K'\in\calM(\t)$ is defined as $s^!(K')[n](n)=\Res(K)$.
\bigskip

Define the restriction

$$
{^p}\underline{\Res}:={^p}\calH^0\circ\underline{\Res}:\calM(\calG)\rightarrow\calM(\t),\,\,\,K\mapsto K'.
$$

\begin{lemma} ${^p}\underline{\Res}$ is left adjoint to ${^p}\underline{\Ind}$.
\end{lemma}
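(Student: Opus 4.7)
The plan is to deduce the adjunction on hearts from the adjunction $\underline{\Res} \dashv \underline{\Ind}$ already established on derived categories, together with the standard truncation adjunction for a $t$-structure.

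First I would use the fact recalled just before the lemma: $\underline{\Res}$ is left adjoint to $\underline{\Ind}$ on $\dcb$. Since $s^![n](n)$ is an equivalence between $\calM(\t)$ and $\calM(\calT)$ (so in particular admits both adjoints with $s_![-n](-n)$ computing the left adjoint up to ${^p}\calH^0$), and since $\Res$ is left adjoint to $\Ind$ on $\dcb$, composing gives $\underline{\Res} \dashv \underline{\Ind}$ on $\dcb$. Then for $K \in \calM(\calG)$ and $L \in \calM(\t)$ one has the derived-category identity
\[
\hom_{\dcb(\calG)}\bigl(K,\underline{\Ind}(L)\bigr) \;\cong\; \hom_{\dcb(\t)}\bigl(\underline{\Res}(K),L\bigr).
\]

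Next I would rewrite each side in terms of perverse $\hom$'s. Since $\underline{\Ind}$ preserves perversity, $\underline{\Ind}(L) = {^p}\underline{\Ind}(L) \in \calM(\calG)$, and as $K$ is also perverse,
\[
\hom_{\dcb(\calG)}\bigl(K,{^p}\underline{\Ind}(L)\bigr) = \hom_{\calM(\calG)}\bigl(K,{^p}\underline{\Ind}(L)\bigr).
\]
For the right-hand side, $\underline{\Res}$ is right $t$-exact by assumption, so $\underline{\Res}(K) \in \dcb(\t)^{\leq 0}$. As $L \in \dcb(\t)^{\geq 0}$, the standard fact about $t$-structures (adjunction of ${^p}\tau_{\geq 0}$ with the inclusion $\dcb(\t)^{\geq 0} \hookrightarrow \dcb(\t)$) gives
\[
\hom_{\dcb(\t)}\bigl(\underline{\Res}(K),L\bigr) \;=\; \hom_{\calM(\t)}\bigl({^p}\calH^0\underline{\Res}(K),L\bigr) \;=\; \hom_{\calM(\t)}\bigl({^p}\underline{\Res}(K),L\bigr),
\]
using that ${^p}\tau_{\geq 0}$ applied to an object of $\dcb(\t)^{\leq 0}$ produces the perverse cohomology in degree zero.

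Combining these three identities yields the bifunctorial isomorphism
\[
\hom_{\calM(\calG)}\bigl(K,{^p}\underline{\Ind}(L)\bigr) \;\cong\; \hom_{\calM(\t)}\bigl({^p}\underline{\Res}(K),L\bigr),
\]
which is the required adjunction. There is essentially no obstacle here: everything is formal from the previously established $\dcb$-adjunction together with the $t$-exactness properties of $\underline{\Ind}$ and $\underline{\Res}$ spelled out just above the lemma. The only mild subtlety worth flagging in the write-up is the verification that $\underline{\Res}(K) \in \dcb(\t)^{\leq 0}$ whenever $K$ is perverse; this follows from right $t$-exactness of $\underline{\Res}$, itself a consequence of the explicit formula $\underline{\Res}(K) = K'\otimes R\Gamma_c(\B(T),\Q)[-2n](-n)$ displayed in the paragraph preceding the lemma.
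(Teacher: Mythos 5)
Your argument is correct: all three identifications you use are valid and natural (hom between perverse objects computed in $\dcb$ equals hom in the heart; the derived-level adjunction $\underline{\Res}\dashv\underline{\Ind}$ asserted just before the lemma; and the truncation adjunction $\hom(X,L)\simeq\hom({^p}\calH^0X,L)$ for $X\in\dcb(\t)^{\leq 0}$, $L\in\calM(\t)$, where the needed bound $\underline{\Res}(K)\in\dcb(\t)^{\leq 0}$ for perverse $K$ is exactly what the displayed formula $\underline{\Res}(K)=K'\otimes R\Gamma_c(\B(T),\Q)[-2n](-n)$ provides). The paper's proof is equally formal but routes the computation differently: instead of passing through $\dcb(\t)$ and truncating, it transports both sides along the equivalence $s^![n](n):\calM(\t)\to\calM(\calT)$, using that by the very definition of ${^p}\underline{\Res}$ (and the $t$-exactness of $\Res$ from Bezrukavnikov--Yom Din) one has $s^!({^p}\underline{\Res}(A))[n](n)=\Res(A)$, and then applies the adjunction $\Res\dashv\Ind$ directly on perverse sheaves; no truncation functor and no appeal to the derived adjunction $\underline{\Res}\dashv\underline{\Ind}$ is needed. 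Your route buys a statement that is purely formal in the $t$-structure (it would work for any right $t$-exact left adjoint of a left $t$-exact functor), while the paper's route exploits the specific fact that ${^p}\underline{\Res}$ is, by construction, the image of the $t$-exact functor $\Res$ under the heart equivalence, which makes the chain of equalities shorter and keeps everything inside the hearts. One cosmetic caveat: your parenthetical remark that $s_![-n](-n)$ computes the left adjoint of $s^![n](n)$ ``up to ${^p}\calH^0$'' is loosely phrased (the relevant adjunction is simply $s_!\dashv s^!$ at the derived level, and the inverse of the heart equivalence is ${^p}\calH^0\circ s_![-n](-n)$); it is not load-bearing, since the adjunction $\underline{\Res}\dashv\underline{\Ind}$ you actually invoke is stated in the paper.
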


\begin{proof}We have 

\begin{align*}
{\rm Hom}_{\calM(\t)}\left({^p}\underline{\Res}(A),B\right)&={\rm Hom}_{\calM(\calT)}\left(s^!({^p}\underline{\Res}(A))[n](n),s^!B[n](n)\right)\\
&={\rm Hom}_{\calM(\calT)}\left(\Res(A),s^!B[n](n)\right)\\
&={\rm Hom}_{\calM(\calG)}\left(A,\Ind(s^!B[n](n))\right)\\
&={\rm Hom}_{\calM(\calG)}\left(A,{^p}\underline{\Ind}(B)\right).
\end{align*}

\end{proof}
 
 Put 
 
 $$
 \overline{\t}:=[\t/W]
 $$
and let $\pi_\t:\t\rightarrow\overline{\t}$ be the quotient map.
\bigskip

\begin{proposition} The functors ${^p}\underline{\Ind}$ and ${^p}\underline{\Res}$ factorize as 

$$
{^p}\underline{\Ind}={^p}\I\circ\pi_{\t\, *},\hspace{1cm}{^p}\underline{\Res}=\pi_\t^*\circ{^p}\R
$$
where ${^p}\I:\calM(\overline{\t})\rightarrow\calM(\calG)$ and ${^p}\R:\calM(\calG)\rightarrow\calM(\overline{\t})$ are defined as 
\begin{align*}
&{^p}\I(K):=\pr_{2\, *}\,\underline{\rm Hom}\left(\IC_{\overline{\t}\times_\car\calG},\pr_1^!K\right)[n](n),\\
&{^p}\R(K):={^p}\calH^0\left(\pr_{1\, !}\left(\IC_{\overline{\t}\times_\car\calG}\otimes\pr_2^*(K)\right)[-n](-n)\right).
\end{align*}
\label{propdecomp}
\end{proposition}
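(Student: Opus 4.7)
The plan is to reinterpret both functors via the Stein factorization of $p$, descend the kernel $W$-equivariantly along $\overline{\pi} : \calS \to \overline{\calS}$, and then invoke the framework of $W$-equivariant correspondences from Section \ref{cor}. Concretely, by Theorem \ref{Steinpro}(ii) the morphism $(q',p): \calB \to \calS = \t\times_\car\calG$ is a small resolution, hence $(q',p)_!\Q = (q',p)_*\Q = \IC_\calS$. Using $p = \pr_2 \circ (q',p)$, $q' = \pr_1 \circ (q',p)$ together with the adjunction identity $\underline{\mathrm{Hom}}((q',p)_!\Q, -) = (q',p)_*(q',p)^!(-)$, I would rewrite $\underline{\Ind}$ in the form $\pr_{2,*}\underline{\mathrm{Hom}}(\IC_\calS, \pr_1^! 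K)$ up to the shift/twist absorbed by $s^![n](n)$; dually, the projection formula yields $\underline{\Res}(K) = \pr_{1,!}(\IC_\calS \otimes \pr_2^* K)[-n](-n)$.

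Next, I would establish that $\IC_\calS$ is canonically $W$-equivariant and descends to $\IC_{\overline{\calS}}$ on $\overline{\calS} := \overline{\t}\times_\car\calG = [\calS/W]$. The $W$-torsor $\pi_\t : \t \to \overline{\t}$ base-changes to a $W$-torsor $\overline{\pi} : \calS \to \overline{\calS}$; the constant sheaf on the smooth dense open $\calS_{\rss} = \t_{\rss}\times_{\car_{\rss}}\calG_{\rss}$ is tautologically $W$-equivariant, and this structure extends uniquely to its intermediate extension $\IC_\calS$. Lemma \ref{descentlem} then provides a descent to a complex on $\overline{\calS}$; since $\overline{\pi}$ is finite étale, this descent coincides with $\IC_{\overline{\calS}}$, because étale pullback preserves intersection cohomology complexes.

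Finally, applying the formalism of Section \ref{cor} to the $W$-equivariant correspondence $\t \xleftarrow{\pr_1}\calS\xrightarrow{\pr_2}\calG$ (with trivial $W$-action on $\calG$) and kernel $\IC_\calS$, the correspondence factors through $\pi_\t$ as $\overline{\t}\xleftarrow{\overline{\pr}_1}\overline{\calS}\xrightarrow{\overline{\pr}_2}\calG$ with kernel $\IC_{\overline{\calS}}$. Combining the base change identities $\pi_\t^*\overline{\pr}_{1,!} = \pr_{1,!}\overline{\pi}^*$ and $\overline{\pr}_1^!\pi_{\t,*} = \overline{\pi}_*\pr_1^!$ with the standard identity $\underline{\mathrm{Hom}}(A, \overline{\pi}_*B) = \overline{\pi}_*\underline{\mathrm{Hom}}(\overline{\pi}^*A, B)$ and the descent $\overline{\pi}^*\IC_{\overline{\calS}} = \IC_\calS$ delivers both claimed factorizations; the commutation ${^p}\calH^0\circ\pi_\t^* = \pi_\t^*\circ{^p}\calH^0$ needed on the $\R$-side is automatic since $\pi_\t^*$ is $t$-exact. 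The most delicate point I expect is the careful bookkeeping of shifts and Tate twists transmitted from $s^![n](n)$ and $s_![-n](-n)$ through the chain of identifications, together with verifying that ${^p}\I$ and ${^p}\R$ preserve perversity — the latter reducing, via the $t$-exactness of $\pi_\t^*$, to the perversity statements already known for ${^p}\underline{\Ind}$ and ${^p}\underline{\Res}$.
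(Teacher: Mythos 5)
Your proposal is correct and is essentially the paper's own argument, just spelled out: the paper's proof is a one-liner observing that, in diagram (\ref{DIAGPER}), the kernel is $(q',p)_!\Q=\IC_{\t\times_\car\calG}$ because $(q',p)$ is a small resolution (Theorem \ref{Steinpro}(ii)), that this intersection complex descends along the $W$-torsor to $\overline{\t}\times_\car\calG$, and that the correspondence formalism of \S\ref{cor} then yields the factorization. Your additional steps (canonical $W$-equivariance of the IC, Lemma \ref{descentlem}, the base change and $\underline{\rm Hom}$ adjunction identities, and the $t$-exactness of $\pi_\t^*$) are precisely the details the paper leaves implicit, so the two arguments coincide.
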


\begin{proof}Follows from the diagram (\ref{DIAGPER}) using that $(q',p)_!\Q=\IC_{\t\times_\car\calG}$ which follows from the fact that $(q',p)$ is a small resolution of singularities by Theorem \ref{Steinpro} (ii).

\end{proof}

We have the following proposition.

\begin{proposition}

(1) Let $K\in\calM(\t)$ be $W$-equivariant  and let $L\in \calM(\calG)$. The adjunction isomorphism

$$
{\rm Hom}_{\calM(\t)}\big({^p}\underline{\Res}(L),K\big)\simeq {\rm Hom}_{\calM(\calG)}\big(L,{^p}\underline{\Ind}(K)\big)
$$
is $W$-equivariant.

(2) The functor ${^p}\R$ is left adjoint to ${^p}\I$.

\end{proposition}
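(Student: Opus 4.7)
The plan is to prove (1) by reformulating $\underline{\Res}$ and $\underline{\Ind}$ as shifted restriction/induction functors for a $W$-equivariant cohomological correspondence (in the sense of \S\ref{cor}) and invoking the resulting manifest $W$-equivariance of the adjunction, then to deduce (2) by taking $W$-invariants and applying descent.

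\emph{Step 1 (reformulation).} Using $q'=s\circ q$, the factorization $p=\pr_2\circ(q',p)$, and the equality $(q',p)_!\Q=\IC_{\t\times_\car\calG}$ from Theorem~\ref{Steinpro}(ii), one obtains via the projection formula
\begin{align*}
\underline{\Res}(L) &= \pr_{1,!}\bigl(\IC_{\t\times_\car\calG}\otimes\pr_2^*L\bigr)[-n](-n),\\
\underline{\Ind}(K) &= \pr_{2,*}\,\underline{\mathrm{Hom}}\bigl(\IC_{\t\times_\car\calG},\pr_1^!K\bigr)[n](n).
\end{align*}
Thus $\underline{\Res}$ and $\underline{\Ind}$ coincide, up to the shift/twist $[n](n)$, with the restriction and induction associated to the cohomological correspondence $\Gamma=(\t\times_\car\calG,\IC_{\t\times_\car\calG},\pr_2,\pr_1)$.

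\emph{Step 2 ($W$-equivariance of the adjunction).} The group $W$ acts on $\t$ naturally, hence on $\t\times_\car\calG$ through the first factor; it acts trivially on $\calG$; the projections $\pr_1,\pr_2$ are $W$-equivariant; and $\IC_{\t\times_\car\calG}$ is $W$-equivariant as the pullback along the étale $W$-torsor $\t\times_\car\calG\to\overline{\t}\times_\car\calG$ of $\IC_{\overline{\t}\times_\car\calG}$. The standard adjunction isomorphism for $\Gamma$ is given by the chain
\begin{align*}
\mathrm{Hom}\bigl(\pr_{1,!}(\IC\otimes\pr_2^*L),K\bigr) &\simeq \mathrm{Hom}\bigl(\IC\otimes\pr_2^*L,\pr_1^!K\bigr)\\
&\simeq \mathrm{Hom}\bigl(\pr_2^*L,\underline{\mathrm{Hom}}(\IC,\pr_1^!K)\bigr)\\
&\simeq \mathrm{Hom}\bigl(L,\pr_{2,*}\underline{\mathrm{Hom}}(\IC,\pr_1^!K)\bigr),
\end{align*}
and each of the three adjunctions used, namely $(\pr_{1,!},\pr_1^!)$, $(\otimes,\underline{\mathrm{Hom}})$ with the $W$-equivariant object $\IC$, and $(\pr_2^*,\pr_{2,*})$, is $W$-equivariant because all the input data are $W$-equivariant and $L$ carries the trivial action. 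Passing to the perverse heart by applying ${^p}\calH^0$ yields assertion (1).

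\emph{Step 3 (deducing (2)).} Let $M\in\calM(\overline{\t})$ and $L\in\calM(\calG)$, and set $K=\pi_\t^*M$ with its canonical $W$-equivariant structure. Taking $W$-invariants of the isomorphism of (1) yields
$$
\mathrm{Hom}_{\calM(\t)}\bigl({^p}\underline{\Res}(L),\pi_\t^*M\bigr)^W \simeq \mathrm{Hom}_{\calM(\calG)}\bigl(L,{^p}\underline{\Ind}(\pi_\t^*M)\bigr)^W.
$$
Since ${^p}\underline{\Res}(L)=\pi_\t^*{^p}\R(L)$ also carries its canonical equivariance, Lemma~\ref{descentlem} identifies the left-hand side with $\mathrm{Hom}_{\calM(\overline{\t})}({^p}\R(L),M)$. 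For the right-hand side, Proposition~\ref{propdecomp} combined with the decomposition $\pi_{\t,!}\Q_\t=\bigoplus_\chi V_\chi\otimes\mathcal{L}_\chi$ of the regular representation gives
$$
{^p}\underline{\Ind}(\pi_\t^*M) = {^p}\I\bigl(M\otimes\pi_{\t,!}\Q_\t\bigr) = \bigoplus_\chi V_\chi\otimes{^p}\I(M\otimes\mathcal{L}_\chi),
$$
with $W$ acting through the $V_\chi$ factors. Since $L$ carries the trivial $W$-action, taking $W$-invariants of $\mathrm{Hom}(L,-)$ commutes with taking $W$-invariants of the target and isolates only the trivial character, giving $\mathrm{Hom}_{\calM(\calG)}(L,{^p}\I(M))$. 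This establishes the adjunction ${^p}\R\dashv{^p}\I$.

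\emph{Main obstacle.} The delicate point is verifying the $W$-equivariance of each of the three adjunction steps simultaneously in Step 2, which amounts to checking that the units and counits of $(\pr_{1,!},\pr_1^!)$, $(\otimes,\underline{\mathrm{Hom}})$, and $(\pr_2^*,\pr_{2,*})$ are natural transformations compatible with the $W$-equivariant structures on $\IC$ and $K$ and with the $W$-equivariance of $\pr_1$ and $\pr_2$. Once this naturality is in place, part (2) follows formally from Lemma~\ref{descentlem} and the regular-representation decomposition of $\pi_{\t,!}\Q_\t$.
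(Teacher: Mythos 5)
Your proposal is correct and follows essentially the same route as the paper: part (1) via the factorization of $\underline{\Res}$ and $\underline{\Ind}$ through the $W$-equivariant kernel $\IC_{\t\times_\car\calG}$ (Theorem \ref{Steinpro}(ii) / Proposition \ref{propdecomp}), and part (2) by taking $W$-invariants, using descent (Lemma \ref{descentlem}) on one side and the regular-representation decomposition of $\pi_{\t\,!}\Q$ (as in Remark \ref{Inv}) on the other. Your Steps 2 and 3 just spell out in more detail what the paper's proof asserts tersely.
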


\begin{proof}From Proposition \ref{propdecomp} we find that

$$
{^p}\underline{\Ind}(w\cdot f)=w\cdot{^p}\underline{\Ind}(f)
$$
for any $f\in{\rm Hom}({^p}\underline{\Res}(L),K)$ and $w\in W$, from which we deduce the assertion (1). The second assertion is a consequence of (1) together with following identities

\begin{align*}
&{\rm Hom}_{\calM(\overline{\t})}\left({^p}\R(L),\overline{K}\right)={\rm Hom}_{\calM(\t)}\left({^p}\underline{\Res}(L),K\right)^W,\\
&{\rm Hom}_{\calM(\calG)}\left(L,{^p}\I(\overline{K})\right)={\rm Hom}_{\calM(\calG)}\left(L,{^p}\underline{\Ind}(K)\right)^W
\end{align*}
where  $K=\pi_\t^*(\overline{K})$.
\end{proof}

\subsection{Springer action}

From Borho-MacPherson construction of the Springer action \cite{BM}, the complex $p_!\Q$ is endowed with an action of the Weyl group $W$ (i.e. a group homomorphism $W\rightarrow {\rm Aut}(p_!(\Q))$). Their strategy was to prove that $p_*\Q$ is the intermediate extension of some smooth $\ell$-adic sheaf on $\calG_\rss$ on which $W$ acts. From the diagram (\ref{DIAGPER}) we see that $p_!\Q=\pr_{2\, !}\IC_{\t\times_\car\calG}$ and so the action of $W$ follows from the fact that $\IC_{\t\times_\car\calG}$ is naturally $W$-equivariant.
\bigskip

This defines an action of $W$ on $H^i(\calB,\Q)$ as well as an action on the cohomology of the fibres. These actions are compatible with the restriction maps from the cohomology of $\calB$
 to the cohomology of the fibers.
 \bigskip
 
 On the other hand the natural map $\B(T)\rightarrow \B(B)$ is a $U$-fibration and so $H^i(\B(T),\Q)=H^i(\B(B),\Q)$. The action of $W$ on $\B(T)$  induces thus an action of $W$ on $H^i(\B(B),\Q)$. This action coincides with the Springer action (regarding $\B(B)$  as the zero fibre of $p$).
 \bigskip
 
 \begin{lemma}The restriction map $H^i(\B(B),\Q)\rightarrow H^i(\calB,\Q)$ induced by the  map $\calB\rightarrow \B(B)$ (given by the $B$-torsor $\b\rightarrow\calB$) is $W$-equivariant for the Springer actions.
 \label{canW} \end{lemma}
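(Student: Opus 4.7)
The plan is to exhibit the pullback $\pi^\ast$ as the inverse of the zero-fiber restriction $i^\ast$, and then invoke the two $W$-equivariance statements already recorded in the preceding discussion. Let $\pi \colon \calB \to \B(B)$ denote the structure map coming from the $B$-torsor $\b \to \calB$, and let $i \colon \B(B) \to \calB$ denote the inclusion of the zero fiber of $p$, i.e.\ the morphism induced by the $B$-equivariant inclusion $\{0\} \hookrightarrow \b$. At the level of stacks the composition $\pi \circ i$ is the map induced by $\{0\} \hookrightarrow \b \to \pt$, which is the identity of $\B(B)$. Hence $i^\ast \circ \pi^\ast = \mathrm{id}$ on $H^\ast(\B(B),\Q)$. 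Since $\pi$ is an affine-space bundle with contractible fibers $\b$, the pullback $\pi^\ast$ is an isomorphism, so $i^\ast$ is also an isomorphism and the two pullbacks are mutually inverse.

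Next I would invoke the two facts stated just before the lemma. First, the Springer action on $H^\ast(\calB, \Q)$ is defined via the $W$-action on $p_! \Q \in \dcb(\calG)$, and restriction to the cohomology of a fiber commutes with this action; in particular $i^\ast$ is $W$-equivariant when $H^\ast(\B(B), \Q)$ is equipped with its Springer structure coming from the identification $\B(B) = p^{-1}(0)$. Second, under the isomorphism $H^\ast(\B(T), \Q) = H^\ast(\B(B), \Q)$ induced by the $U$-fibration $\B(T) \to \B(B)$, this Springer action agrees with the action coming from the geometric $W$-action on $\B(T)$.

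Combining these, $i^\ast$ is an isomorphism of $W$-modules when $H^\ast(\B(B), \Q)$ carries the action induced from $\B(T)$. Passing to inverses, $\pi^\ast = (i^\ast)^{-1}$ is $W$-equivariant for the same two actions, which is the statement of the lemma. There is no genuine obstacle here: the substantive content lies in the identification of the Springer action on the zero fiber with the geometric action on $\B(T)$, which the text has already handled; the present lemma only needs the bookkeeping trick of recognising the structural projection $\calB \to \B(B)$ as the cohomological inverse of the zero-fiber inclusion.
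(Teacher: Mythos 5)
Your proof is correct and matches the paper's argument: both identify the pullback along the structure map $\calB\rightarrow\B(B)$ (a vector bundle with fibre $\b$) as the inverse of the zero-fibre restriction $i^*$, which is already known to be $W$-equivariant for the Springer actions, and conclude by passing to inverses. The extra bookkeeping you supply ($\pi\circ i=\id$, hence $i^*\circ\pi^*=\id$) is exactly what the paper leaves implicit.
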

 
 \begin{proof}We know that the restriction map
$$
i^*:H^i(\calB,\Q)\rightarrow H^i(\B(B),\Q)
$$
of the inclusion $i:\B(B)\hookrightarrow \calB$ is $W$-equivariant for the Springer actions. 
\bigskip

The morphism $b:\calB\rightarrow\B(B)$ is a vector bundle with fiber $\b$. Hence $b^*$ is an isomorphism with inverse $i^*$ and so is $W$-equivariant.
\end{proof}
\bigskip

\section{Steinberg stacks}\label{SS}

\subsection{Geometry of Steinberg stacks}

We consider the following stacks

$$
\calZ:=\calB\times_\calG\calB,\hspace{1cm}\calY:=\calB\times_\calS\calB.
$$
We have the following cartesian diagrams

$$
\xymatrix{\calY\ar[rr]\ar[d]&&\calZ\ar[d]\\
\calS\ar[d]\ar[rr]&&\calS\times_\calG\calS\ar[d]\\
\t\ar[rr]&&\t\times_\car\t}
$$
where the top horizontal arrow is the natural map and the last two ones are the diagonal morphisms. Since the diagonal morphism $\t\rightarrow\t\times_\car\t$ is closed, the stack $\calY$ is a closed substack of $\calZ$.
\bigskip

\begin{remark}Consider
$$
Z=\{(x,gB,g'B)\in \g\times G/B\times G/B\,|\, \Ad(g^{-1})(x)\in \b,\, \Ad({g'}^{-1})(x)\in \b\}
$$
so that $\calZ=[Z/G]$ where $G$ acts on $Z$ by
$$
(x,gB,g'B)\cdot h=(\Ad(h^{-1})x,h^{-1}gB,h^{-1}g'B).
$$
\end{remark}
\bigskip

The canonical morphisms $\calB\rightarrow\B(B)$ and $\calG\rightarrow\B(G)$ induce a morphism
$$
\xymatrix{\calZ\ar[r]^-\varphi&\B(B)\times_{\B(G)}\B(B)}
$$
and the canonical map $[B\backslash G/B]\rightarrow\B(B)\times_{\B(G)}\B(B)$ is an isomorphism.
\bigskip

Consider the stratification
$$
[B\backslash G/B]=\coprod_{w\in W}\mathcal{O}_w,
$$
where $\mathcal{O}_w=[B\backslash BwB/B]$. Notice that the natural map $\B(B_w)\rightarrow\mathcal{O}_w$ is an isomorphism.

This induces partitions into locally closed substacks

$$
\calZ=\coprod_{w\in W}\calZ_w,\hspace{1cm}\calY=\coprod_{w\in W}\calY_w.
$$

For $w\in W$, put
$$
B_w:=B\cap wBw^{-1},\hspace{.5cm}\b_w:={\rm Lie}(B_w),\hspace{.5cm}U_w:=U\cap wUw^{-1}, \hspace{.5cm}\u_w:={\rm Lie}(U_w).
$$

Then

$$
\calZ_w=[\b_w/B_w],\hspace{1cm}\calY_w=[(\t^w+\u_w)/B_w]
$$
where $B_w$ acts by the adjoint action and $\t^w$ are the points of $\t$ fixed by $w$.

\bigskip

Since $B_w=TU_w$, we proved the following proposition.

\begin{proposition}
(1) For all $w\in W$ the projection
$$
\calZ_w\rightarrow\calT
$$
induced by the canonical projection $\b\rightarrow \t$ is a fibration with fibre isomorphic to $ [\u_w/U_w]$ where $U_w$ acts by the adjoint action.
\bigskip

(2) The fibers of the projection $\calZ_w\rightarrow\mathcal{O}_w$ (resp. $\calY_w\rightarrow\mathcal{O}_w$) are affine spaces all of same dimension $\b_w$ (resp. $\mathrm{dim}\,\t^w+\mathrm{dim}\,\u_w$). \label{SS1}\end{proposition}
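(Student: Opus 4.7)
The plan is to reduce everything to the presentations $\calZ_w\simeq[\b_w/B_w]$ and $\calY_w\simeq[(\t^w+\u_w)/B_w]$ established just before the proposition, combined with the split decompositions $B_w=T\ltimes U_w$ and $\b_w=\t\oplus\u_w$. Both splittings are compatible with the adjoint $B_w$-action: $T$ acts trivially on the $\t$-factor and by the roots on $\u_w$, while $U_w$ preserves the $\t$-component because $[\u_w,\t]\subseteq\u_w$.

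For (1), I would first recognise that the projection $\calZ_w\to\calT$ is exactly the map of quotient stacks induced by the compatible pair of surjections $(\b_w\twoheadrightarrow\t,\,B_w\twoheadrightarrow T)$, whose kernels form the pair $(\u_w,U_w)$. This fits into the diagram
$$
\xymatrix{[\u_w/U_w]\ar[r]\ar[d]&[\b_w/B_w]\ar[d]\\ \mathrm{pt}\ar[r]&[\t/T]}
$$
and the key step is to check that this square is $2$-cartesian. I would carry this out by unwinding $S$-points: an object of $[\b_w/B_w]\times_{\calT}\mathrm{pt}$ is a $B_w$-torsor $P$ with a $B_w$-equivariant map $\phi:P\to\b_w$ together with a trivialization of the induced $T$-torsor $P/U_w$ and of its map to $\t$. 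Such a trivialization reduces $P$ to a $U_w$-torsor $Q$ and forces $\phi$ to factor through $\u_w$, producing precisely an $S$-point of $[\u_w/U_w]$. Since $\b_w\to\t$ is smooth and $B_w\to T$ is a smooth surjection, the morphism $\calZ_w\to\calT$ is itself smooth, which justifies the terminology of a fibration with fibre $[\u_w/U_w]$.

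For (2), I would observe that the projection $\calZ_w\to\mathcal{O}_w=\B(B_w)$ is the map $[\b_w/B_w]\to[\mathrm{pt}/B_w]$ induced by the structural morphism $\b_w\to\mathrm{pt}$. Pulling back along the canonical atlas $\mathrm{pt}\to\B(B_w)$ identifies the (geometric) fibre with $\b_w$ itself, which is affine of dimension $\dim\b_w$. The same argument for $\calY_w=[(\t^w+\u_w)/B_w]$ yields fibre $\t^w+\u_w$, affine of dimension $\dim\t^w+\dim\u_w$ (the sum being direct since $\t^w\subseteq\t$ and $\u_w\subseteq\u$ with $\t\cap\u=0$).

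The main technical point is the cartesian-square check in (1); once that is in hand, the rest is bookkeeping about the split extension $B_w=T\ltimes U_w$ and the trivial observation that the atlas $\mathrm{pt}\to\B(H)$ identifies the fibre of $[X/H]\to\B(H)$ with the scheme $X$ itself.
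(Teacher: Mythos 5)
Your argument is correct and is essentially the paper's own (almost unwritten) proof: the paper deduces the proposition directly from the presentations $\calZ_w\simeq[\b_w/B_w]$, $\calY_w\simeq[(\t^w+\u_w)/B_w]$, $\mathcal{O}_w\simeq\B(B_w)$ and the splitting $B_w=T\ltimes U_w$, $\b_w=\t\oplus\u_w$, and your $S$-point check of the cartesian square and the atlas-pullback computation for (2) merely make those reductions explicit. One harmless caveat, which the paper's wording shares: your fibre identification is over the point $(0,\mathrm{triv})$ of $\calT$, while over $t_0\neq 0$ the fibre is $[(t_0+\u_w)/U_w]$ with a translated affine $U_w$-action, so ``fibration with fibre $[\u_w/U_w]$'' must be read loosely (smooth map, fibres of this shape), which is all that is used later since only the cohomological triviality of these fibres matters.
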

\bigskip

\subsection{Purity of cohomology}

\begin{proposition}The compactly supported cohomology of $\calZ$ and $\calY$ is pure and vanishes in odd degree.
\label{purity}\end{proposition}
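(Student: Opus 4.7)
My plan is to exploit the Bruhat-type stratification $\calZ = \coprod_{w \in W} \calZ_w$ from Proposition \ref{SS1} (and its analogue for $\calY$), establish the desired purity and parity stratum by stratum, and then propagate these properties to $\calZ$ and $\calY$ via the long exact sequences in compactly supported cohomology.

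On a single stratum, Proposition \ref{SS1} realizes $\calZ_w = [\b_w/B_w]$ as a vector bundle over $\B(B_w)$ of rank $\dim \b_w$, and likewise $\calY_w = [(\t^w + \u_w)/B_w]$ as a vector bundle of rank $\dim \t^w + \dim \u_w$. The Thom isomorphism therefore reduces the problem to $H_c^*(\B(B_w))$, up to an even degree shift and a Tate twist. To analyze the latter, I use the decomposition $B_w = T \ltimes U_w$, which induces a fibration $\B(U_w) \to \B(B_w) \to \B(T)$. Since $U_w$ is a connected unipotent group, the $\ell$-adic cohomology of $\B(U_w)$ is concentrated in degree $0$, so the Leray spectral sequence collapses to give $H^*(\B(B_w)) \simeq H^*(\B(T)) \simeq \Q[x_1, \ldots, x_n]$ with generators in degree and weight $2$. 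This is pure and lives in even degrees; Poincaré--Verdier duality on the smooth algebraic stack $\B(B_w)$ then transfers these properties to $H_c^*(\B(B_w))$, and hence to $H_c^*(\calZ_w)$ and $H_c^*(\calY_w)$.

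Finally, I choose a linear extension $w_1, \ldots, w_{|W|}$ of the Bruhat order on $W$ and set $Z_i := \bigcup_{j \leq i} \calZ_{w_j}$. Since the closure of a Schubert cell is a union of smaller Schubert cells, each $Z_i$ is a closed substack of $\calZ$ with $Z_{i+1} \setminus Z_i = \calZ_{w_{i+1}}$. Inductively assuming that $H_c^*(Z_i)$ is pure and concentrated in even degrees, the long exact sequence for the open-closed pair forces the connecting morphism to vanish (it would otherwise run between even and odd degrees), yielding short exact sequences
$$
0 \to H_c^{2k}(\calZ_{w_{i+1}}) \to H_c^{2k}(Z_{i+1}) \to H_c^{2k}(Z_i) \to 0
$$
with all three terms pure of weight $2k$ (the middle being pure by the standard extension argument for Frobenius-pure modules). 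Iterating up to $i = |W|$ yields the statement for $\calZ$; applying the same argument to $\calY_w = \calY \cap \calZ_w$, whose closure order is inherited from that of $\calZ$ because $\calY \subset \calZ$ is closed, handles $\calY$. The only nonformal ingredient is the parity and purity of $H_c^*(\B(B_w))$, which rests on the triviality of $\ell$-adic classifying-stack cohomology for connected unipotent groups in characteristic $\neq \ell$; everything else is routine long-exact-sequence bookkeeping.
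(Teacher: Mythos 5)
Your proof is correct and follows essentially the same route as the paper: stratify $\calZ$ and $\calY$ by $W$ using Proposition \ref{SS1}, reduce each stratum to $\B(B_w)$ and then to $\B(T)$ (whose compactly supported cohomology is pure and even), and glue via a filtration by closed unions of strata. The only cosmetic differences are that the paper invokes the SGA filtration spectral sequence where you run iterated long exact sequences, and it states the stratum computation directly in $H_c^*$ rather than via Poincar\'e duality on $\B(B_w)$; you should just make explicit, as the paper does, the choice of a finite subfield $k_o$ of definition so that the purity statements make sense.
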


\begin{proof}We prove it for $\calY$ but the proof is completely similar for $\calZ$. Let us choose $k_o$ a finite subfield of $k$ such that $G$, $B$ and $T$ are defined over $k_o$ and $T$ is split. Then $\mathrm{Gal}(k/k_o)$ acts trivially on $W$.
\bigskip

Recall (see \S\ref{SS}) that we have a partition into locally closed substacks
\begin{equation}
\calY=\coprod_{w\in W}\calY_w.
\end{equation}
By our choice of $k_o$ each stratum is defined over $k_o$.
\bigskip

We choose a total order $\{w_0,w_1,\dots\}$ on $W$ so that we have a decreasing filtration of closed substacks
$$
\calY=\calY_0\supset\calY_1\supset\cdots\supset\calY_{|W|-1}\supset\calZ_{|W|}=\emptyset
$$
satisfying $\calY_i\backslash\calY_{i+1}=\calY_{w_i}$ for all $i$. This defines a spectral sequence \cite[Chapter 6, Formula (2.5.2)]{SGA} from which we can reduce the proof of the purity of the cohomology of $\calY$ and the vanishing in odd degrees to the analogous statement for $\calY_w$.
\bigskip

The fibers of the projection $\calY_w\rightarrow\mathcal{O}_w$ are affine spaces all of same dimension $n(w)=\mathrm{dim}\,\t^w+\mathrm{dim}\,\u_w$ by Proposition \ref{SS} and so
$$
H_c^i(\calY_w,\Q)=H_c^{i-2n(w)}(\mathcal{O}_w,\Q)(-n(w)).
$$
Recall also that the map $\B(B_w)\rightarrow\mathcal{O}_w$ is an isomorphism and that
$$
H_c^{k+2\mathrm{dim}\, U_w}(\B(T),\Q)(\mathrm{dim}\, U_w)=H_c^k(\B(B_w),\Q).
$$
Therefore
$$
H_c^i(\calY_w,\Q)=H_c^{i-2\mathrm{dim}\,\t^w}(\B(T),\Q)(-\mathrm{dim}\,\t^w).
$$
Since the cohomology of $\B(T)$ is pure and vanishes in odd degree, the same is true for $H_c^i(\calY_w,\Q)$.
\end{proof}

\subsection{Restriction to the diagonal}

Notice that 

$$
H_c^i(\calZ,\Q)=H_c^i(\calG,p_!\Q\otimes p_!\Q)
$$
and so $H_c^i(\calZ,\Q)$ is naturally equipped with an action of  $W\times W$.
\bigskip

The aim of this section is to prove the following theorem.
\bigskip

\begin{theorem}The cohomological restriction

$$
H_c^{2i}(\calZ,\Q)\longrightarrow H_c^{2i}(\calB,\Q)
$$
of the diagonal morphism $\calB\rightarrow\calZ$ is $W$-equivariant for the Springer actions (where we consider the diagonal action of $W$ on the source).

\label{diag-res}

\end{theorem}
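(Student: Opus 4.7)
The plan is to reduce the theorem to the $W$-equivariance of a morphism of complexes on $\calG$, then check that equivariance by combining a direct verification on $\calG_\rss$ with a rigidity argument based on the purity of Proposition \ref{purity}.

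First, I would factor the diagonal $\iota:\calB\to\calZ$ as $\calB\to\calY\hookrightarrow\calZ$, using the closed substack $\calY=\calB\times_\calS\calB$ of $\calZ=\calB\times_\calG\calB$. Pushing the adjunction unit $\Q_\calZ\to\iota_*\Q_\calB$ along the structure map $\calZ\to\calG$ yields a morphism
$$
\alpha:\; p_!\Q\otimes p_!\Q \longrightarrow p_!\Q
$$
in $\dcb(\calG)$, whose image under $H_c^*(\calG,-)$ is the cohomological restriction of the theorem. Via $p_!\Q={\pr_2}_!\IC_{\t\times_\car\calG}$ and the Künneth formula, the Borho--MacPherson description of the Springer action identifies the $W\times W$-action on $p_!\Q\otimes p_!\Q$ with the one induced by the $W\times W$-equivariant structure on $\IC_{\t\times_\car\calG}\boxtimes_\calG\IC_{\t\times_\car\calG}$. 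Thus the theorem reduces to showing that $\alpha$ is equivariant for the diagonal $W\subset W\times W$.

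Second, I would verify this equivariance on the regular semisimple locus $\calG_\rss$. There, $\pr_2:\t_\rss\times_\car\calG_\rss\to\calG_\rss$ is an étale $W$-torsor and $\IC_{\t\times_\car\calG}$ restricts to the constant sheaf, so $p_!\Q|_{\calG_\rss}$ is the local system $\Q[W]$ with its left-regular $W$-action, while $p_!\Q\otimes p_!\Q|_{\calG_\rss}$ is $\Q[W\times W]$. Under these identifications $\alpha|_{\calG_\rss}$ becomes the ``restriction to the diagonal'' $\delta_{(w_1,w_2)}\mapsto\delta_{w_1,w_2}\,\delta_{w_1}$, which is manifestly equivariant for the diagonal $W$.

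Third, to propagate this equivariance to all of $\calG$, I would pass to the sheaf level on $\calS=\t\times_\car\calG$. The morphism $\alpha$ factors as
$$
p_!\Q\otimes p_!\Q = (\pr_2\times_\calG\pr_2)_!(\IC\boxtimes_\calG\IC) \;\longrightarrow\; {\pr_2}_!(\IC\otimes\IC) \;\xrightarrow{{\pr_2}_!\beta}\; {\pr_2}_!\IC = p_!\Q,
$$
where the first arrow is pushforward of the closed-diagonal adjunction on $\calS\hookrightarrow\calS\times_\calG\calS$ (diagonal-$W$-equivariant by the $W\times W$-structure on $\IC\boxtimes_\calG\IC$), and $\beta:\IC\otimes\IC\to\IC$ in $\dcb(\calS)$ comes from the diagonal $\calB\hookrightarrow\calY$. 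On $\calS_\rss$ the sheaf $\IC$ is the constant sheaf and $\beta|_{\calS_\rss}$ is the identity, trivially $W$-equivariant. Since $\IC$ is the intermediate extension from $\calS_\rss$ by Theorem \ref{Steinpro}(ii) and $\IC\otimes\IC$ is pure with vanishing odd cohomology by Proposition \ref{purity}, the restriction $\mathrm{Hom}_{\dcb(\calS)}(\IC\otimes\IC,\IC)\to\mathrm{Hom}_{\dcb(\calS_\rss)}(\IC\otimes\IC|_{\calS_\rss},\IC|_{\calS_\rss})$ is injective on the summands surviving to $\calS_\rss$, so $\beta$ is diagonal-$W$-equivariant on $\calS$, whence so is $\alpha$, and taking $H_c^{2i}$ proves the theorem.

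The principal obstacle is the injectivity step of the third paragraph: $\IC\otimes\IC$ is not itself irreducible and, via the decomposition theorem enabled by Proposition \ref{purity}, decomposes into shifted IC summands some of which could in principle be supported away from $\calS_\rss$, so one must argue that such summands cannot contribute to $\mathrm{Hom}(\IC\otimes\IC,\IC)$ in a way that would let non-$W$-equivariant morphisms restrict to zero on $\calS_\rss$. A cleaner alternative, which I would fall back on if that argument proves delicate, is to apply the $W$-equivariant Postnikov descent of Proposition \ref{W-prop} to a Postnikov tower of $\IC\otimes\IC$ compatible with the stratification of $\calS$ by Levi type, using the odd-degree vanishing from Proposition \ref{purity} to kill the relevant $\mathrm{Ext}^1$ obstructions and transport the $W$-equivariant structure of $\beta$ from $\calS_\rss$ stratum-by-stratum to all of $\calS$.
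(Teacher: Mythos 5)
Your reduction in the first two paragraphs is fine: identifying $H_c^*(\calZ,\Q)=H_c^*(\calG,p_!\Q\otimes p_!\Q)$, recasting the restriction to the diagonal as a morphism $\alpha:p_!\Q\otimes p_!\Q\to p_!\Q$ (equivalently $\beta:\IC_\calS\otimes\IC_\calS\to\IC_\calS$ on $\calS$), and checking diagonal $W$-equivariance over the regular semisimple locus is all correct, and it is the same general philosophy as the paper (prove equivariance generically, then rigidify). But the globalization step is a genuine gap, and you have put your finger on it yourself: you need injectivity of
$$
\mathrm{Hom}_{\dcb(\calS)}\left(\IC_\calS\otimes\IC_\calS,\IC_\calS\right)\longrightarrow \mathrm{Hom}_{\dcb(\calS_\rss)}\left(\Q,\Q\right),
$$
and nothing you cite establishes it. Since $\IC_\calS\otimes\IC_\calS=g_!\Q_\calY$ with $g:\calY\to\calS$ proper, its semisimple constituents include shifted IC sheaves supported on the non-regular locus, and such constituents can map nontrivially to $\IC_\calS$ in the relevant degree; Proposition \ref{purity} controls $H_c^*(\calY)$ (hence $\mathrm{Ext}^*(\IC_\calS,\IC_\calS)$ via Proposition \ref{Ext}), not the kernel of restriction on this Hom space. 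The paper's own Section \ref{calcul} (``Un calcul triste'') is a warning that exactly this type of ``a map between these kernels vanishing on the regular locus must vanish'' reasoning fails in nearby degrees: there $\overline{\sigma}\in\ext^2(\IC_{\overline S},\IC_{\overline S})$ restricts to zero on the regular part but is nonzero, and this is precisely why $\calN$ itself is not $W$-equivariant. Your fallback is not an argument as stated either: Proposition \ref{W-prop} descends Postnikov diagrams along a $W$-torsor, but $\calS\to[\t/W]\times_\car\calG$ is a $W$-torsor only over the regular semisimple locus, and the ``$\mathrm{Ext}^1$ obstructions'' you propose to kill stratum-by-stratum are not identified or bounded.

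For comparison, the paper avoids this Hom-injectivity issue by pushing down to $\t\times_\car\t$ rather than staying on $\calS$ or $\calG$: writing $H_c^*(\calZ,\Q)=H_c^*(\t\times_\car\t,(q',q')_!\Q)$, it uses the stratification $\calZ=\coprod_w\calZ_w$ with $((q',q')|_{\calZ_w})_!\Q=\Delta_{\t,w\,*}\Q\otimes R\Gamma_c(\B(T),\Q)$ and the odd-degree vanishing of $H_c^*(\B(T),\Q)$ to degenerate the spectral sequence at $E_1$; then Proposition \ref{IE} shows each cohomology sheaf $\calH^{2i}(q',q')_!\Q$ is the intermediate extension of its restriction to $\t_\reg\times_\car\t_\reg$, hence equals $\Delta_*\Q\otimes H_c^{2i}(\B(T),\Q)$ with $\Delta$ the normalization $W\times\t\to\t\times_\car\t$. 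At that level morphisms \emph{are} determined by their restriction to the regular locus, and the restriction map of the theorem becomes $\Delta_{\t,1}^*:H_c^{2n}(\t\times_\car\t,\Delta_*\Q)\to H_c^{2n}(\t,\Q)$, which is manifestly $W$-equivariant. To repair your write-up you would need to supply essentially this step (or an equivalent rigidity statement) in place of the asserted injectivity.
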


Let $q':\calB\rightarrow\t$ be induced by the canonical projection $\b\rightarrow\t$. Then

$$
H^i_c(\calZ,\Q)=H_c^i(\t\times_\car\t,(q',q')_!\Q),\hspace{1cm}H_c^i(\calB,\Q)=H_c^i(\t,q'_!\Q).
$$
Choose a total ordering $\{w_0,w_1,\dots\}$ on $W$ so that we have a decreasing filtration of closed substacks

$$
\calZ=\calZ_0\supset\calZ_1\supset\cdots\supset\calZ_{|W|-1}\supset\calZ_{|W|}=\emptyset
$$
satisfying $\calZ_i\backslash\calZ_{i+1}=\calZ_{w_i}$ for all $i$.
\bigskip

By \cite[Chapter 6, Formula (2.5.2)]{SGA} we have a spectral sequence

$$
E_1^{ij}=\calH^{i+j}\left(\left((q',q')|_{\calZ_{w_i}}\right)_!\Q\right)\Rightarrow \calH^{i+j}(q',q')_!\Q.
$$

For $w\in W$, consider the morphism 

$$
\Delta_{\t,w}:\t\longrightarrow \t\times_\car\t,\hspace{1cm}t\mapsto (w(t),t)
$$
and the following commutative diagram

$$
\xymatrix{\calZ_w\ar[r]\ar[d]_{q'_w}&\calZ\ar[d]^{(q',q')}\\
\t\ar[r]^-{\Delta_{\t,w}}&\t\times_\car\t}
$$
Then

$$
((q',q')|_{\calZ_w})_!\Q=\Delta_{{\t,w}\,*}q'_{w\,*}\Q=\Delta_{\t,w\,*}\Q\otimes R\Gamma_c(\B(T),\Q).
$$
As $H^{\rm odd}_c(\B(T),\Q)$ vanishes, the spectral sequence degenerates at $E_1$. 
\bigskip

\begin{proof}[Proof of Theorem \ref{diag-res}]Since the above spectral sequence degenerates at $E_1$, we have $\calH_c^{\rm odd}(q',q')_!\Q=0$ and $\calH^{\rm even}(q',q')_!\Q$ is a  successive extension of the $\Delta_{\t,w\,*}\Q\otimes H^{\rm even}_c(\B(T),\Q)$. By Proposition \ref{IE}, $\calH^{2i}(q',q')_!\Q$ is thus a perverse sheaf (up to a shift) that is the intermediate extension of its restriction to $\t_\reg\times_\car\t_\reg$.
\bigskip

We thus have

$$
\calH^{2i}(q',q')_!\Q=\Delta_*\Q\otimes H^{2i}_c(\B(T),\Q)
$$
where $\Delta:W\times\t\rightarrow\t\times_\car\t$, \,$(w,t)\mapsto (w(t),t)$ is the normalization morphism.
\bigskip

Using the spectral sequence

$$
H_c^i\left(\t\times_\car\t,\calH^j(q',q')_!\Q\right)\Rightarrow H_c^{i+j}(\calZ,\Q)
$$
together with the fact that $H_c^i(\t\times_\car\t,\Delta_*\Q)=0$ unless $i=2n$ we deduce that

$$
H_c^{-2i}(\calZ,\Q)=H_c^{2n}(\t\times_\car\t,\Delta_*\Q)\otimes H_c^{-2n-2i}(\B(T),\Q).
$$
The restriction morphism $H_c^i(\calZ,\Q)\rightarrow H_c^i(\calB,\Q)$ is induced by the restriction morphism

$$
\Delta_{\t,1}^*:H_c^{2n}(\t\times_\car\t,\Delta_*\Q)\rightarrow H_c^{2n}(\t,\Q)
$$
which is $W$-equivariant, hence the theorem.
\end{proof}

\section{Postnikov diagrams of  kernels}\label{kernel}

The aim of this section is to compute the kernel 

$$\calN:=(q,p)_!\Q$$
as well as its restriction to nilpotent elements.
\bigskip

To alleviate the notation we will denote by $f$ the representable morphism

$$
(q',p):\calB\rightarrow\calS
$$
considered in Theorem \ref{Steinpro}.
\bigskip

The map $(q,p)$ decomposes as
\begin{equation}
\xymatrix{\calB\ar[rr]^-{\delta:=(q,1_\calB)}&&\hat{\calB}:=\calT\times_\t\calB\ar[rr]^-{\hat{f}:=1_\calT\times p}&&\hat{\calS}:=\calT\times_\car\calG}
\label{DIAGvert}\end{equation}
or equivalently as
$$
\xymatrix{\calB\ar[rr]^-{\delta:=(\tau,1_\calB)}&&\B(T)\times\calB\ar[rr]^-{1_{\B(T)}\times f}&&\B(T)\times\calS}.
$$
under the identification $\calT=\t\times\B(T)$. Notice that $\tau$ corresponds to the $T$-torsor $\dot{\calB}=[\b/U]\rightarrow[\b/B]=\calB$.

The first map is a $T$-torsor which fits into the cartesian diagram
$$
\xymatrix{\calB\ar[r]^\tau\ar[d]_\delta\ar[r]&\B(T)\ar[r]\ar[d]&\mathrm{Spec}(k)\ar[d]\\
\hat{\calB}\ar[r]&\B(T)\times\B(T)\ar[r]& B(T)}
$$
where the map $\B(T)\rightarrow\B(T)\times\B(T)$ is the diagonal embbeding and $\B(T)\times\B(T)\rightarrow\B(T)$ is induced by $T\times T\rightarrow T, (t,h)\mapsto th^{-1}$.
\bigskip

\subsection{Postnikov diagrams associated to $T$-torsors}\label{Torsors}

Let $\calV$ be an   algebraic stack and $\dot{\calV}$ a $T$-torsor $\rho:\dot{\calV}\rightarrow\calV$. We want to compute $\rho_!\Q$.
\bigskip

This torsor corresponds to a morphism
$$
\sigma:\calV\rightarrow\B(T).
$$
We have a Chern class morphism
$$
c_\mathrm{can}:X^*(T)\rightarrow H^2(\B(T),\Q)(1)
$$
associated to the canonical $T$-torsor $\mathrm{Spec}(k)\rightarrow\B(T)$ which induces an isomorphism
$$
X^*(T)\otimes\Q\simeq H^2(\B(T),\Q)(1).
$$
Composed with $\sigma^*$, it induces a Chern class morphism
$$
c_\rho:X^*(T)\rightarrow H^2(\calV,\Q)(1).
$$
Concretely, if $\chi\in X^*(T)$, then $c_\rho(\chi)$ is the Chern class of the line bundle
$$
(\dot{\calV}\times\bbA^1)/T\rightarrow\calV,
$$
where $T$ acts on $\bbA^1$ as $x\cdot t=\chi(t^{-1})x$, for $(x,t)\in\bbA^1\times T$.
\bigskip

Since $H^2(\calV,\Q)=\ext^2_\calV(\Q,\Q)=\hom_\calV(\Q,\Q[2])$, we can regard $c$ as a map
$$
c_\rho:\Q\rightarrow X_*(T)\otimes\Q[2](1)
$$
in $\dcb(\calV)$.
\bigskip

This defines a complex of objects of  $\dcb(\calV)$
$$
\Q[-2n](-n)\overset{\partial_n}{\longrightarrow} X_*(T)\otimes\Q[2-2n](1-n)\overset{\partial_{n-1}}{\longrightarrow}\bigwedge^2X_*(T)\otimes\Q[4-2n](2-n)\overset{\partial_{n-2}}{\longrightarrow}\cdots
$$
\begin{equation}
\cdots\rightarrow\bigwedge^{n-1}X_*(T)\otimes\Q[-2](-1)\overset{\partial_1}{\longrightarrow}\bigwedge^nX_*(T)\otimes\Q,
\label{diagramhomo}
\end{equation}
where $\partial_{n-i+1}[2n-2i+2](n-i+1)$ is induced by the morphism
$$
\bigwedge^{i-1}X_*(T)\otimes\Q\longrightarrow\bigwedge^iX_*(T)\otimes\Q[2](1)
$$
that maps $v$ to $v\wedge c_\rho(1)$.
\bigskip

\begin{proposition}
(1) We have
$$
{^p}\calH^i(\rho_!\Q)=\calH^i(\rho_!\Q)\simeq\bigwedge^{2n-i}X_*(T)\otimes\Q(n-i).
$$
\noindent (2) The complex (\ref{diagramhomo}) is the base $\Lambda_b(\rho_!\Q)$ of the Postnikov diagram $\Lambda(\rho_!\Q)$.
\label{postnikovtore}
\end{proposition}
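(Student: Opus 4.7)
The $T$-torsor $\rho:\dot\calV\to\calV$ is classified by a morphism $\sigma:\calV\to\B(T)$ fitting into a cartesian square with the universal $T$-torsor $\pi:\mathrm{Spec}(k)\to\B(T)$. Proper base change gives $\rho_!\Q\simeq\sigma^*(\pi_!\Q)$, and by construction $c_\rho=\sigma^*c_{\mathrm{can}}$. Since cohomology sheaves, truncations, and the Postnikov construction $\Lambda(-)$ (characterized by Lemma \ref{Lambda}) all commute with $*$-pullback, it is enough to establish both assertions in the universal case $\rho=\pi$ on $\B(T)$.

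\textbf{The rank-one case.} For a line bundle $L\to\calX$ with associated $\mathbb{G}_m$-torsor $\rho:L^\times\to\calX$, the open-closed decomposition of $L$ along its zero section $i:\calX\hookrightarrow L$ produces on $L$ the distinguished triangle $j_!\Q_{L^\times}\to\Q_L\to i_*\Q_\calX\overset{+}{\to}$. Pushing forward along the vector bundle projection $p:L\to\calX$, together with the Thom isomorphism $p_!\Q_L\simeq\Q_\calX[-2](-1)$ and the standard identification of the restriction-to-zero-section map $\Q_L\to i_*\Q_\calX$ with the Chern class $c_1(L)=c_\rho(1)$, yields
$$\Q_\calX[-1]\longrightarrow\rho_!\Q\longrightarrow\Q_\calX[-2](-1)\overset{c_\rho}{\longrightarrow}\Q_\calX.$$
This simultaneously computes $\calH^1(\rho_!\Q)\simeq\Q$ and $\calH^2(\rho_!\Q)\simeq\Q(-1)$, and exhibits the canonical Postnikov diagram of $\rho_!\Q$ as one of length one with base $\Q[-2](-1)\overset{c_\rho}{\to}\Q$, matching (\ref{diagramhomo}) for $n=1$.

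\textbf{The general case via K\"unneth.} Choose an isomorphism $T\simeq\mathbb{G}_m^n$, so that $\B(T)\simeq\B(\mathbb{G}_m)^n$ and $\pi$ is the $n$-fold external product of the rank-one universal torsors $\pi_i$. By K\"unneth, $\pi_!\Q\simeq\pi_{1,!}\Q\boxtimes\cdots\boxtimes\pi_{n,!}\Q$. Using the rank-one triangle in each factor realizes $\pi_!\Q$ as the total object of an $n$-cube in $\dcb(\B(T))$ whose vertex at $S\subseteq\{1,\dots,n\}$ is $\Q[-2|S|](-|S|)$ and whose edges are the individual Chern classes $c_i$. Grouping vertices by $|S|$ yields a linear tower whose base, after identifying $\bigoplus_{|S|=j}\Q$ with $\bigwedge^{n-j}X_*(T)\otimes\Q$ via complements, is exactly (\ref{diagramhomo}) with differentials wedge-with-$c_\mathrm{can}$. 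The K\"unneth computation then gives $\calH^i(\pi_!\Q)\simeq\bigwedge^{2n-i}X_*(T)\otimes\Q(n-i)$, establishing (1); the equality ${}^p\calH^i=\calH^i$ follows as these are semisimple direct sums of constant sheaves on $\B(T)$. For (2), the Postnikov diagram $\Lambda(\pi_!\Q)$ is uniquely determined by its base (Lemma \ref{Lambda}), and the above linear tower exhibits this base as (\ref{diagramhomo}).

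\textbf{Main obstacle.} The crux is to match the $|S|$-filtration of the Koszul cube with the canonical truncation filtration of $\pi_!\Q$, and to verify that the connecting morphisms are \emph{precisely} wedge with $c_\mathrm{can}$ rather than some scalar multiple. This requires careful sign tracking when ordering the $n$ factors of $\mathbb{G}_m$ and fixing the normalization of the Euler/Chern class used in the Thom isomorphism. Both are standard but tedious, and are perhaps most cleanly organized by induction on $n$, using the rank-one computation directly for the inductive step.
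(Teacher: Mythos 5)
Your proposal is correct and follows essentially the same route as the paper: the paper's (one-sentence) proof likewise reduces to the rank-one case via an isomorphism $T\simeq(\mathbb{G}_m)^n$ and realizes the complex (\ref{diagramhomo}) as a tensor product of the analogous rank-one complexes, which is exactly your K\"unneth/cube argument. Your additional details (the reduction to the universal torsor over $\B(T)$ and the zero-section/Thom computation identifying the rank-one connecting map with $c_\rho$) just make explicit what the paper leaves implicit, including the sign/normalization bookkeeping you flag.
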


\begin{proof}
The proposition reduces to the case $n=1$ using an isomorphism $T\simeq (\mathbb{G}_m)^n$. Indeed, the complex (\ref{diagramhomo}) can be realized as the tensor product of analogous complexes associated to rank $1$ tori.
\end{proof}

\subsection{Postnikov diagram of  $\calN$}\label{Pos-K}

Since the map $\hat{f}:\hat{\calB}\rightarrow\hat{\calS}$ is a small resolution of singularities, we have
$$
\hat{f}_!\Q=\IC_{\hat{\calS}}.
$$
Following \S\ref{Torsors} we now regard the Chern class morphism
$$
c_\delta:X^*(T)\rightarrow H^2(\hat{\calB},\Q)(1)
$$
associated to the $T$-torsor $\delta:\calB\rightarrow\hat{\calB}$ as a morphism
$$
c_\delta:\Q\rightarrow X_*(T)\otimes\Q[2](1).
$$
Applying $\hat{f}_!$ we get
$$
\hat{f}_!(c_\delta):\IC_{\hat{\calS}}\rightarrow X_*(T)\otimes\IC_{\hat{\calS}}[2](1).
$$
Applying the functor $\hat{f}_!$ to the complex $\Lambda_b(\delta_!\Q)$ we obtain a complex
\begin{equation}
\IC_{\hat{\calS}}[-2n](-n)\overset{\partial_n}{\longrightarrow} X_*(T)\otimes\IC_{\hat{\calS}}[2-2n](1-n)\overset{\partial_{n-1}}{\longrightarrow}\cdots
\label{diagramhomoIC}
\end{equation}
\begin{flushright}
$\cdots\overset{\partial_1}{\longrightarrow}\left(\bigwedge^nX_*(T)\right)\otimes\IC_{\hat{\calS}}$,
\end{flushright}
where $\partial_{n-i+1}[2n-2i+2](n-i+1)$ is induced by the morphism
$$
\left(\bigwedge^{i-1}X_*(T)\right)\otimes\IC_{\hat{\calS}}\longrightarrow\left(\bigwedge^iX_*(T)\right)\otimes\IC_{\hat{\calS}}[2](1)
$$
given by $v\otimes s\mapsto v\wedge f_!(c_\delta)(s)$.
\bigskip

\bigskip

\begin{proposition}
(1) The perverse cohomology sheaves of $\calN$ are
$$
{^\p}\calH^i\left(\calN\right) =\left(\bigwedge^{2n-i}X_*(T)\right)\otimes\IC_{\hat{\calS}}(n-i).
$$

\noindent (2) We have $\Lambda(\calN)=\hat{f}_!\left(\Lambda(\delta_!\Q)\right)$. 

\noindent (3) Assume that $G$, $B$ and $T$ are defined over a finite subfield $k_o$ of $k$. The diagram $\Lambda(\calN)$ is the unique Postnikov diagram defined over $k_o$ whose base is the complex (\ref{diagramhomoIC}). 
\label{ladderunique}\end{proposition}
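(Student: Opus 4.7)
Plan:

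The approach for parts (1) and (2) is to apply the exact functor $\hat{f}_!$ (equal to $\hat{f}_*$ since $\hat{f}$ is proper) to the canonical Postnikov diagram $\Lambda(\delta_!\Q)$ supplied by Proposition \ref{postnikovtore}. By the projection formula together with the small-resolution identity $\hat{f}_!\Q = \IC_{\hat{\calS}}$ (a consequence of Theorem \ref{Steinpro}), the base (\ref{diagramhomo}) of $\Lambda(\delta_!\Q)$ is transformed termwise into the complex (\ref{diagramhomoIC}), and the outcome becomes $\hat{f}_!\delta_!\Q = (q,p)_!\Q = \calN$. This yields a Postnikov diagram for $\calN$ whose base is (\ref{diagramhomoIC}).

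To identify the pushed-forward diagram with $\Lambda(\calN)$ taken in the perverse $t$-structure, I would invoke the characterization of Remark \ref{remPostnikov}. Each term $A_j = \bigwedge^{n-j}X_*(T) \otimes \IC_{\hat{\calS}}(-j)[-2j]$ is a Tate-twisted and shifted copy (with multiplicity) of the perverse sheaf $\IC_{\hat{\calS}}[\dim \hat{\calS}]$, so it lies in a single perverse degree that grows by $2$ as $j$ grows. The $C_j$'s, being iterated extensions of $A_0,\dots,A_j$, then automatically sit in the correct perverse $t$-half. This verifies the characterization and proves (2); part (1) follows by reading off the base terms as the perverse cohomology sheaves of $\calN$.

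For part (3), I would apply Proposition \ref{laddercompletion} in the $k_o$-setting to the complex (\ref{diagramhomoIC}) equipped with its natural $k_o$-structure, reducing uniqueness to verifying the vanishing conditions (\ref{COND11}) and (\ref{COND21}) over $k_o$. For $j < i$, the relevant Hom groups reduce to $\ext^{2j-2i}$ between Tate-twisted copies of $\IC_{\hat{\calS}}$, of negative cohomological degree, hence vanish since perverse sheaves admit no negative self-extensions. For $j > i$ and $k \geq 1$, I would use the exact sequence (\ref{BBD}) to express the Ext groups over $k_o$ in terms of Frobenius-fixed and coinvariant pieces of Ext groups over $k$. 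Via adjunction together with the smallness of $\hat{f}$, these latter groups can be expressed as summands of the cohomology of Steinberg-type stacks with controlled Tate twist; combining the purity of $H^*(\calY,\Q)$ and its vanishing in odd degree (Proposition \ref{purity}) with the fact that the relevant Tate twist $i - j$ is nonzero forces these pure groups to have weight different from $0$, so their Frobenius-fixed parts vanish, giving the required vanishing over $k_o$.

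The main obstacle is the weight analysis in (3): one must carefully identify $\ext^*(\IC_{\hat{\calS}}, \IC_{\hat{\calS}}(m))$ as a Tate-twisted summand of the cohomology of $\calY$, then extract from the purity in Proposition \ref{purity} the vanishing of Frobenius-fixed parts needed to conclude via (\ref{BBD}). Everything else is a formal consequence of the exactness of $\hat{f}_!$, the perverse-degree characterization of $\Lambda(\calN)$, and the uniqueness assertion in Proposition \ref{laddercompletion}.
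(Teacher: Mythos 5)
Your proposal follows essentially the same route as the paper: push the Postnikov diagram $\Lambda(\delta_!\Q)$ forward by $\hat{f}_!$, use $\hat{f}_!\Q=\IC_{\hat{\calS}}$ to identify the base with (\ref{diagramhomoIC}) and the perverse-degree conditions of Remark \ref{remPostnikov} (equivalently Lemma \ref{Lambda}) to identify the outcome diagram with $\Lambda(\calN)$, then get uniqueness over $k_o$ from Proposition \ref{laddercompletion}, checking (\ref{COND11}) by negative-Ext vanishing and (\ref{COND21}) via the exact sequence (\ref{BBD}) combined with the purity/odd-vanishing of $H_c^*(\calY,\Q)$, which the paper packages as Proposition \ref{Ext}. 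One correction in your weight bookkeeping: for $j>i$ and $k\geq 1$ the geometric group $\ext^{2(j-i)-k}\left(\IC_{\hat{\calS}},\IC_{\hat{\calS}}(j-i)\right)$ is pure of weight $-k$ (and the coinvariant term in (\ref{BBD}) has weight $-k-1$), so the nonzero weight comes from $k\geq 1$ and not, as you state, from the Tate twist $j-i$ being nonzero --- taken literally your argument would also kill the $k=0$ groups, which have weight $0$ and must survive since they carry the differentials $\partial_i$ of the base complex.
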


\begin{proof}
The assertion (1) follows from the fact that the morphism $\hat{f}$ is small and that the cohomology groups of $\Lambda(\delta_!\Q)$ are constant (see Proposition \ref{postnikovtore}(1)). The assertion (2) follows from Lemma \ref{Lambda}.
\bigskip

To prove assertion (3), we choose a finite subfield $k_o$ of $k$ on which $G$, $T$ and $B$ are defined and we let $T_o$ and $\hat{\calS}_o$ be the induced $k_o$-structure on $T$ and $\hat{\calS}$. We want to prove that $\hat{f}_{o!}\left(\Lambda(\delta_{o!}\Q)\right)$ is the unique Postnikov diagram that completes the complex (\ref{diagramhomoIC}) with $T$ and $\hat{\calS}$ replaced by $T_o$ and $\hat{\calS}_o$.

We need to prove that the hypothesis of Lemma \ref{laddercompletion} are satisfied. The condition (\ref{COND11}) is verified as
$$
\mathrm{Hom}(\IC_{\hat{\calS}_o}[-2j](-j),\IC_{\hat{\calS}_o}[-2i](-i))=\mathrm{Ext}^{2(j-i)}(\IC_{\hat{\calS}_o},\IC_{\hat{\calS}_o}(j-i))=0
$$
for all $j<i$.

The proof of the condition (\ref{COND21}) reduces to prove that
$$
\mathrm{Hom}(\IC_{\hat{\calS}_o}[-2j](-j),\IC_{\hat{\calS}_o}[-2i-k](-i))=\mathrm{Ext}^{2(j-i)-k}(\IC_{\hat{\calS}_o},\IC_{\hat{\calS}_o}(j-i))=0
$$
for all $j>i$ and $k\geq 1$. But this is clear from the next proposition using the exact sequence (\ref{BBD}).
\end{proof}

\begin{proposition}
$\mathrm{Ext}^j_{\hat{\calS}}(\IC_{\hat{\calS}},\IC_{\hat{\calS}})=0$ if $j$ is odd and $\mathrm{Ext}^{2i}_{\hat{\calS}}(\IC_{\hat{\calS}},\IC_{\hat{\calS}})$ is pure of weight $2i$.
\label{Ext}\end{proposition}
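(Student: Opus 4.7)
The plan is to express $\mathrm{Ext}^j(\IC_{\hat{\calS}},\IC_{\hat{\calS}})$ as a Verdier dual of the compactly supported cohomology of $\hat{\calB}\times_{\hat{\calS}}\hat{\calB}$, and then to invoke the purity already established in Proposition~\ref{purity}. Since $\hat{f}$ is proper with $\hat{f}_!\Q=\IC_{\hat{\calS}}$, the $(\hat{f}_!,\hat{f}^!)$-adjunction together with $!$-base change in the Cartesian square of $\hat{f}$ against itself gives
$$\mathrm{Ext}^j(\IC_{\hat{\calS}},\IC_{\hat{\calS}})=H^j(\hat{\calB},\hat{f}^!\hat{f}_!\Q)=H^j(\hat{\calB}\times_{\hat{\calS}}\hat{\calB},\pi_2^!\Q),$$
where $\pi_1,\pi_2$ denote the two projections from $\hat{\calB}\times_{\hat{\calS}}\hat{\calB}$ to $\hat{\calB}$. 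The stack $\hat{\calB}=\calT\times_\t\calB\simeq\B(T)\times\calB$ is smooth of dimension $-n$, so $\pi_2^!\Q=\omega_{\hat{\calB}\times_{\hat{\calS}}\hat{\calB}}[2n](n)$, and global Verdier duality transforms the expression into
$$\mathrm{Ext}^j(\IC_{\hat{\calS}},\IC_{\hat{\calS}})\simeq H_c^{-2n-j}\bigl(\hat{\calB}\times_{\hat{\calS}}\hat{\calB},\Q\bigr)^\vee(n).$$

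Next, using the identification $\hat{\calS}=\calT\times_\car\calG\simeq\B(T)\times\calS$ (with $\calS=\t\times_\car\calG$) from \S\ref{kernel}, one reads off
$$\hat{\calB}\times_{\hat{\calS}}\hat{\calB}\simeq\B(T)\times(\calB\times_\calS\calB)=\B(T)\times\calY,$$
with $\calY$ the Steinberg stack of Section~\ref{SS}. Proposition~\ref{purity} (and its proof, which computes each $H_c^k(\calY_w,\Q)$ as a Tate twist of $H_c^{k-2\dim\t^w}(\B(T),\Q)$) gives that $H_c^k(\calY,\Q)$ is pure of weight $k$ and vanishes in odd degrees, and the same is true for $H_c^*(\B(T),\Q)$. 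By K\"unneth these properties pass to the product $\B(T)\times\calY$, and substitution into the displayed isomorphism yields: for $j$ odd the right-hand side lies in $H_c$ of odd degree and thus vanishes; for $j=2i$ a weight count $-(-2n-2i)-2n=2i$ produces the claimed pure weight $2i$.

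The mathematical content is entirely concentrated in Proposition~\ref{purity}; everything else is formal manipulation of adjunctions, base change, and duality. The only real obstacle is careful bookkeeping of cohomological shifts and Tate twists through this chain of formal isomorphisms, and the verification that the purity of the cohomology of $\calY$ supplied by Proposition~\ref{purity} is weight $k$ in degree $k$ on the nose (which does follow from its proof).
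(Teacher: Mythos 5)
Your argument is correct and is essentially the paper's own proof: both reduce $\mathrm{Ext}^\bullet(\IC_{\hat{\calS}},\IC_{\hat{\calS}})$ by adjunction, base change and Poincar\'e duality to the compactly supported cohomology of the Steinberg stack $\calY$ (times $\B(T)$) and then quote Proposition~\ref{purity}; the paper merely peels off the $\B(T)$ factor by K\"unneth at the start instead of at the end, working with $\calS$ and $\calB$ (of dimension $0$) so the shifts and twists you track explicitly disappear. Your added observation that the proof of Proposition~\ref{purity} gives weight exactly $k$ in degree $k$ is correct and is indeed what the weight claim requires.
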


\begin{proof}
We need to compute
$$
\mathrm{Ext}^i_{\hat{\calS}}(\IC_{\hat{\calS}},\IC_{\hat{\calS}})=\mathrm{Hom}(\IC_{\hat{\calS}},\IC_{\hat{\calS}}[i]).
$$
Since $\hat{\calS}=\B(T)\times\calS$ and since the statement of the proposition is true if we replace $\hat{S}$ by $\B(T)$ (noticing that $\IC_{\B(T)}=\Q$), by K\"unneth formula we are reduced to prove the proposition with $\calS$ instead of $\hat{\calS}$.
\bigskip

Consider the cartesian diagram
$$
\xymatrix{\calY=\calB\times_\calS\calB\ar[r]^-{\pr_2}\ar[d]_{\pr_1}&\calB\ar[d]^f\\
\calB\ar[r]^f&\calS}
$$
Since $f$ is representable, the morphism $\pr_1$ is also representable. We have

\begin{align*}
\mathrm{Hom}(\IC_\calS,\IC_\calS[i])&=\mathrm{Hom}(f_!\Q,f_!\Q[i])\\
&=\mathrm{Hom}(f^*f_!\Q,\Q[i])\\
&=\mathrm{Hom}(\pr_{1!}\Q,\Q[i])\\
&=\mathrm{Hom}(\Q,\pr_1^!\Q[i])\\
&=H^i(\calY,\pr_1^!\Q)\\
&=H_c^{-i}(\calY,\Q),
\end{align*}
where the last identity is by Poincar\'e duality (notice that $\mathrm{dim}\,\calY=0$).
\bigskip

We thus conclude from Proposition \ref{purity}. 
\end{proof}

\subsection{Restriction to nilpotent elements}\label{nil-diag}

We let $\frak{z}$ be a locally closed subset of $z(\frak{g})$.  We consider the Lusztig correspondence 

$$
\xymatrix{&&\calB_\frak{z}:=[(\frak{z}+\frak{u})/B]\ar[rrd]^{p_\frak{z}}\ar[lld]_{q_\frak{z}}\ar[d]^{(q_\frak{z},p_\frak{z})}&&\\
\calT_\frak{z}:=[\frak{z}/T]&&\hat{\calS}_\frak{z}:=\B(T)\times[(\frak{z}+\g_{\rm nil})/G]\ar[rr]^-{\pr_{\calG_\frak{z}}}\ar[ll]_-{\pr_{\calT_\frak{z}}}&&\calG_\frak{z}:=[(\frak{z}+\g_{\rm nil})/G]}
$$
over $\frak{z}=\frak{z}/\!/W\subset \car$.
\bigskip

The $T$-torsor $\delta_\frak{z}:\calB_\frak{z}\rightarrow\hat{\calB}_\frak{z}=\B(T)\times\calB_\frak{z}$ induced from $\delta$ gives rise to a Chern class morphism

$$
c_{\delta_\frak{z}}:\Q\rightarrow X_*(T)\otimes\Q[2](1)
$$
and so following the same lines as in \S \ref{Pos-K}  we end up with a complex

\begin{equation}
\left(\overline{\mathbb{Q}}_{\ell,\,\B(T)}\boxtimes p_{\frak{z}\,!}\Q\right)[-2n](-n)\rightarrow X_*(T)\otimes \left(\overline{\mathbb{Q}}_{\ell,\,\B(T)}\boxtimes p_{\frak{z}\,!}\Q\right)[2-2n](1-n)\rightarrow\cdots
\label{complex-nil}\end{equation}
\begin{flushright}
$\cdots\longrightarrow\left(\bigwedge^nX_*(T)\right)\otimes \left(\overline{\mathbb{Q}}_{\ell,\,\B(T)}\boxtimes p_{\frak{z}\,!}\Q\right)$.
\end{flushright}
which is the restriction of the complex (\ref{diagramhomoIC}) to $\hat{\calS}_\frak{z}$.
\bigskip

If $\frak{z}=0$ we will replace the subscript $\frak{z}$ by nil.
\bigskip

\noindent Let $\pr_{\rm nil}:\hat{\calS}_\frak{z}\rightarrow\hat{\calS}_{\rm nil}=\B(T)\times\calG_{\rm nil}$ be the morphism induced by the projection $\frak{z}+\g_{\rm nil}\rightarrow\g_{\rm nil}$.

\begin{proposition} (1) We have

$$
(q_\frak{z},p_\frak{z})_!\Q=\pr_{\rm nil}^*\left((q_{\rm nil},p_{\rm nil})_!\Q\right).
$$
(2) Assume also that $G, B$ and $T$ are defined over  a finite subfield $k_o$ of $k$. The Postnikov diagram $\Lambda((q_{\rm nil},p_{\rm nil})_!\Q)=\hat{f}_{{\rm nil}\, !}\big(\Lambda(\delta_{{\rm nil}\, !}\Q)\big)$ is the unique Postnikov diagram defined over $k_o$ whose base is the complex (\ref{complex-nil}) with $\frak{z}=0$.
\end{proposition}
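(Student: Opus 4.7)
For part (1), I would begin with the observation that $\frak{z} \subset z(\g)$ is pointwise fixed by the adjoint action of $G$ (hence of $B$ and $T$). This gives compatible product decompositions $\calB_\frak{z} \cong \frak{z} \times \calB_\nil$, $\calG_\frak{z} \cong \frak{z} \times \calG_\nil$, and $\hat{\calS}_\frak{z} \cong \frak{z} \times \hat{\calS}_\nil$, under which the correspondence maps become $(q_\frak{z},p_\frak{z}) = \id_\frak{z} \times (q_\nil,p_\nil)$ and $\pr_\nil$ is the second projection onto $\hat{\calS}_\nil$. K\"unneth (equivalently, proper base change on the resulting cartesian square) then immediately yields the required identification $(q_\frak{z},p_\frak{z})_!\Q \cong \pr_\nil^*(q_\nil,p_\nil)_!\Q$.

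For part (2), my approach would mirror Proposition \ref{ladderunique}. The equality $\Lambda((q_\nil,p_\nil)_!\Q)=\hat{f}_{\nil\,!}(\Lambda(\delta_{\nil\,!}\Q))$ would be deduced from Lemma \ref{Lambda}: since $p_\nil$ is the (stacky) Springer resolution it is semi-small, so $\hat{f}_{\nil\,!}$ is perverse $t$-exact; combined with the nilpotent analog of Proposition \ref{postnikovtore}(1), this shows that $\hat{f}_{\nil\,!}$ applied termwise to the base of $\Lambda(\delta_{\nil\,!}\Q)$ reproduces the perverse cohomology sheaves of $(q_\nil,p_\nil)_!\Q$, so the two Postnikov diagrams agree by Lemma \ref{Lambda}. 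For the uniqueness over $k_o$, I would invoke Proposition \ref{laddercompletion} applied to the complex (\ref{complex-nil}) with $\frak{z}=0$; the required Hom-vanishing conditions (\ref{COND11}) and (\ref{COND21}), after transit through the exact sequence (\ref{BBD}) in the same parity-plus-weight argument as in the proof of Proposition \ref{ladderunique}(3), reduce to the nilpotent analog of Proposition \ref{Ext}: $\ext^j(\hat{f}_{\nil\,!}\Q, \hat{f}_{\nil\,!}\Q)$ vanishes for odd $j$ and is pure of weight $j$ for even $j$.

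The main obstacle will be establishing this purity/parity statement. Following the pattern of Proposition \ref{Ext}, adjunction and proper base change identify the Ext algebra with $H_c^*(\hat{\calY}_\nil,\Q)$, where $\hat{\calY}_\nil:=\hat{\calB}_\nil\times_{\hat{\calS}_\nil}\hat{\calB}_\nil \cong \B(T)\times\calZ_\nil$ with $\calZ_\nil:=\calB_\nil\times_{\calG_\nil}\calB_\nil$. K\"unneth with the known purity of $H_c^*(\B(T),\Q)$ reduces matters to the purity and even-degree concentration of $H_c^*(\calZ_\nil,\Q)$, the exact nilpotent analog of Proposition \ref{purity}. I would prove this by repeating its stratification argument: the Bruhat decomposition restricted to the nilpotent cone gives $\calZ_\nil = \coprod_{w\in W} [\u_w/B_w]$ with each stratum a vector bundle of rank $\dim\u_w$ over $\B(B_w)$; using $B_w = TU_w$ with $U_w$ unipotent, $H_c^*(\B(B_w),\Q)$ is an even shift and Tate twist of $H_c^*(\B(T),\Q)$, which is pure and concentrated in even degrees; the associated stratification spectral sequence then degenerates by parity, yielding the desired purity of $H_c^*(\calZ_\nil,\Q)$.
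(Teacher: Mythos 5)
Your proposal is correct and follows essentially the same route as the paper, which simply declares assertion (1) clear and says assertion (2) ``goes exactly along the same lines'' as Proposition \ref{ladderunique}; your write-up supplies precisely those lines (base change along $\pr_{\rm nil}$ for (1); Lemma \ref{Lambda}, Proposition \ref{laddercompletion} and the sequence (\ref{BBD}) for (2), with the needed input being the nilpotent analogues of Propositions \ref{Ext} and \ref{purity}, proved by the same adjunction/base-change computation and the Bruhat stratification $\calZ_\nil=\coprod_w[\u_w/B_w]$). Only note that you do not need (and should not claim) full perverse $t$-exactness of $\hat{f}_{\nil\,!}$: semi-smallness of the Springer map gives perversity of the shifted pushforward of the constant sheaf, which is all that Lemma \ref{Lambda} requires here.
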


\begin{proof} The assertion (1) is clear. The proof of  assertion (2) goes exactly along the same lines as the proof of Proposition \ref{ladderunique}. 
\end{proof}

\section{Descent}\label{descent}

As explained in \S \ref{calcul}, the kernel $\calN$ does not descend to $\ocalT\times_\car\calG$. However we will prove in this section that we have a descent result if we restrict ourself to strata of $\car$ that we are now defining.

\subsection{The main theorem}

We have a natural stratification of $\car$ into smooth locally closed subsets indexed by the set $\frak{L}$ of $G$-conjugacy classes of Levi subgroups (of parabolic subgroups) of $G$

$$
\car=\coprod_{\lambda\in\frak{L}}\car_\lambda.
$$
Concretely, let $L$ be a representative of $\lambda\in\frak{L}$ containing $T$, then $\car_\lambda$ is the image of 

$$
z(\frak{l})^o:=\{x\in \frak{l}\,|\,C_G(x)=L\}\subset z(\frak{l})\subset \t
$$
by the quotient map $\t\rightarrow\car$. 
\bigskip

If $\calX$ is either $\calT,\calB,\hat{\calB}, \calS, \hat{\calS}, \ocalT $ or $\calG$ we put

$$
\calX_\lambda:=\calX\times_\car\car_\lambda
$$
for any $\lambda\in\frak{L}$.

\bigskip

We consider the Lusztig correspondence over $\lambda\in \frak{L}$

$$
\xymatrix{\calT_\lambda&&\calT_\lambda\times_{\car_\lambda}\calG_\lambda\ar[ll]_-{\pr_{\calT_\lambda}}\ar[rr]^-{\pr_{\calG_\lambda}}&&\calG_\lambda}.
$$

\bigskip

Notice that $\calT_\lambda=[\t_\lambda/T]\simeq \t_\lambda\times\B(T)$ with

$$
\t_\lambda=\coprod_{g\in N_G(T)/N_G(L,T)}g\,z(\frak{l})^og^{-1}.
$$
Notice that if for some $g\in G$ and $s\in z(\frak{l})^o$ we have ${\rm Ad}(g)(s)\in z(\frak{l})^o$, then $g\in N_G(L)$ \cite[Lemma 2.6.16]{Letellier}.
\bigskip

We thus  have isomorphisms of stacks

$$
\calG_\lambda\simeq [(z(\frak{l})^o+\frak{l}_{\rm nil})/N_G(L)],\hspace{1cm}\overline{\calT}_\lambda=[\t_\lambda/N]\simeq [z(\frak{l})^o/N_G(L,T)].
$$
Put $B^L:=B\cap L$ and let $\u^L$ be the Lie algebra of the unipotent radical of $B^L$. We consider the following commutative diagram (as in \S \ref{nil-diag} with $G=L$ and $\frak{z}=z(\frak{l})^o$)

\begin{equation}
\xymatrix{&[(z(\frak{l})^o+\u^L)/B^L]\ar[rd]^{p^L}\ar[ld]_{q^L}\ar[d]^{(q^L,p^L)}&\\
 [z(\frak{l})^o/T]\ar[d]& [z(\frak{l})^o/T]\times_{z(\frak{l})^o} [(z(\frak{l})^o+\frak{l}_{\rm nil})/L]\ar[r]\ar[l]\ar[d]&[(z(\frak{l})^o+\frak{l}_{\rm nil})/L]\ar[d]\\
 \overline{\calT}_\lambda&\overline{\calT}_\lambda\times_{\car_\lambda}\calG_\lambda\ar[r]\ar[l]&\calG_\lambda}
\label{diag-nil}\end{equation}
Put

$$
W^L:=N_L(T)/T,\hspace{1cm}\widehat{W}^L:=N_G(L,T)/T,\hspace{1cm}W_L:=N_G(L)/L
$$
We have an exact sequence

$$
1\rightarrow W^L\rightarrow \widehat{W}^L\rightarrow W_L\rightarrow 1
$$
and the group $\widehat{W}^L$ acts on

$$
 [z(\frak{l})^o/T]\times_{z(\frak{l})^o} [(z(\frak{l})^o+\frak{l}_{\rm nil})/L]
$$
in the natural way on the first factor and via $W_L$ on the second factor.
\bigskip

The first two vertical arrows of the diagram (\ref{diag-nil}) are $\widehat{W}^L$-torsors while the right vertical arrows is a $W_L$-torsor.
\bigskip

\begin{theorem}The complex $\calN^L:=(q^L,p^L)_!\Q$ is $\widehat{W}^L$-equivariant, i.e. it descends to a complex $\bN_\lambda$ on  $\overline{\calT}_\lambda\times_{\car_\lambda}\calG_\lambda$.
\label{maintheo-descent}\end{theorem}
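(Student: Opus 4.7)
The plan is to realize $\calN^L$ as the outcome of a canonical Postnikov diagram whose base visibly descends along the $\widehat{W}^L$-torsor of (\ref{diag-nil}), and then to invoke Proposition \ref{W-prop} to descend the entire diagram (and hence its outcome) to $\overline{\calT}_\lambda\times_{\car_\lambda}\calG_\lambda$. Concretely, I would specialize the analysis of \S\ref{nil-diag} to $L$ in place of $G$, with central parameter $\frak{z}=z(\frak{l})^o$. The map $(q^L,p^L)$ factors as $\hat{f}^L\circ\delta^L$ with $\delta^L$ a $T$-torsor, and by the $L$-analogue of Proposition \ref{ladderunique}, $\calN^L$ arises as the outcome of the Postnikov diagram $\hat{f}^L_!\bigl(\Lambda(\delta^L_!\Q)\bigr)$, whose base (the $L$-version of (\ref{complex-nil})) lives on $\B(T)\times[(z(\frak{l})^o+\frak{l}_{\rm nil})/L]$ and has terms
$$
A_i \;=\; \bigwedge\nolimits^{n-i}X_*(T)\otimes\bigl(\overline{\mathbb{Q}}_{\ell,\B(T)}\boxtimes p^L_!\Q\bigr)[-2i](-i),\qquad 0\le i\le n,
$$
with connecting maps induced by the Chern class $c_{\delta^L}$.

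Next I would equip this base with a $\widehat{W}^L$-equivariant structure along the $\widehat{W}^L$-torsor $\B(T)\times[(z(\frak{l})^o+\frak{l}_{\rm nil})/L]\to\overline{\calT}_\lambda\times_{\car_\lambda}\calG_\lambda$. The lattice $X_*(T)$ inherits the natural action of $\widehat{W}^L\subset W$; the factor $\overline{\mathbb{Q}}_{\ell,\B(T)}$ is canonically equivariant; and the parametrized Springer sheaf $p^L_!\Q$ acquires a $\widehat{W}^L$-equivariant structure by assembling the Springer action of $W^L$ (which \emph{is} a genuine $W^L$-equivariant structure, because $W^L$ acts trivially on $[(z(\frak{l})^o+\frak{l}_{\rm nil})/L]$) with the $W_L$-equivariance coming from conjugation by $N_G(L)$, which canonically intertwines Springer sheaves for different Borels of $L$. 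The Chern-class differentials are then automatically equivariant, since $\delta^L$ itself is $\widehat{W}^L$-equivariant.

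To apply Proposition \ref{W-prop} it then remains to verify (\ref{COND1}) and (\ref{COND2}) for $(A_i)_{0\le i\le n}$. Setting $F=\overline{\mathbb{Q}}_{\ell,\B(T)}\boxtimes p^L_!\Q$ and using the K\"unneth formula (noting that $H^\bullet(\B(T),\Q)$ is concentrated in even degrees and pure of the expected weights), one reduces to computing $\mathrm{Ext}^\bullet(p^L_!\Q,p^L_!\Q)$ on $[(z(\frak{l})^o+\frak{l}_{\rm nil})/L]$. Following the proof of Proposition \ref{Ext} with $L$ in place of $G$, these Ext groups vanish in odd degree and are pure of weight equal to the degree; the purity rests on the $L$-analogues of Propositions \ref{SS1} and \ref{purity} (the fibration analysis of the corresponding $L$-Steinberg stacks). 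Combined with the exact sequence (\ref{BBD}) over a finite subfield of definition, this yields (\ref{COND1}) and (\ref{COND2}) exactly as in the proof of Proposition \ref{ladderunique}(3). Proposition \ref{W-prop} then produces a unique descended Postnikov diagram on $\overline{\calT}_\lambda\times_{\car_\lambda}\calG_\lambda$, whose outcome is the required $\bN_\lambda$.

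The main obstacle I foresee is the cocycle compatibility in the second step: one must check that the Springer $W^L$-action on $p^L_!\Q$ and the $W_L$-action by conjugation of Borels genuinely assemble into a single $\widehat{W}^L$-equivariant structure, rather than merely a pair of actions compatible up to a coboundary. This boils down to tracing Borho--MacPherson's construction of the Springer action for the Grothendieck--Springer resolution of $G$, restricting it to the stratum $\calG_\lambda$, and matching the resulting decomposition against the extension $1\to W^L\to\widehat{W}^L\to W_L\to 1$. Once this compatibility is secured, the weight-theoretic bookkeeping in the third step is routine and the theorem follows by direct appeal to Proposition \ref{W-prop}.
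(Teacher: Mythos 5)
Your overall skeleton (realize $\calN^L$ as the outcome of the Postnikov diagram $\hat f^L_!\bigl(\Lambda(\delta^L_!\Q)\bigr)$, put a $\widehat{W}^L$-equivariant structure on its base, check the Hom-vanishing conditions (\ref{COND11})--(\ref{COND21}) by the purity argument of Propositions \ref{purity} and \ref{Ext} over a finite field of definition, and conclude by Proposition \ref{W-prop}) is exactly the strategy of \S\ref{main-descent}. But there is a genuine gap at the decisive step: you assert that ``the Chern-class differentials are then automatically equivariant, since $\delta^L$ itself is $\widehat{W}^L$-equivariant.'' This is false as stated and is precisely the point the paper has to work for. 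The group $W$ (resp.\ $\widehat{W}^L$) does not act on $\calB_{\rm nil}$, nor on $\hat\calB_{\rm nil}$ compatibly with the Springer action downstairs, so $\delta$ is not an equivariant torsor in any sense that would make the arrows equivariant for free; what must be proved is that the composite
$$
X^*(T)\overset{c_{\delta}}{\longrightarrow} H^2(\hat{\calB},\Q)(1)\longrightarrow\ext^2\left(\hat p_!\Q,\hat p_!\Q\right)(1)
$$
is $W$-equivariant when the target carries the Springer action (Theorem \ref{Step1}, i.e.\ Theorem \ref{maintheo2}). The paper explicitly warns that this is \emph{a priori} not obvious because the $W$-action on $\hat\calG$ does not lift to $\hat\calB$, and it splits the proof into two nontrivial pieces: Proposition \ref{WChern} (equivariance of $c_\delta$, which already needs Lemma \ref{canW} comparing the Springer action on the zero fibre with the natural action on $H^\bullet(\B(T))$), and Proposition \ref{Z} (equivariance of the map induced by $\hat p_!$), which by duality reduces to Theorem \ref{diag-res}: the cohomological restriction $H_c^{2i}(\calZ,\Q)\to H_c^{2i}(\calB,\Q)$ along the diagonal of the Steinberg stack is $W$-equivariant for the diagonal Springer action — itself proved by a genuine analysis of $\calH^\bullet(q',q')_!\Q$ as intermediate extensions. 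That such equivariance cannot be ``automatic'' is the whole content of \S\ref{calcul} (Verdier's ``calcul triste''): for the kernel over $\t\times_\car\calG$ the analogous arrows are \emph{not} $W$-equivariant, already for $\SL_2$, so the statement genuinely depends on working over $\B(T)$ (equivalently over the strata), and some argument of the type of Theorem \ref{diag-res} is unavoidable.

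Two further remarks. First, the obstacle you single out (assembling the $W^L$-Springer action and the $W_L$-conjugation action into a single $\widehat{W}^L$-structure on $p^L_!\Q$) is a legitimate point for general $L$, but it is secondary: the paper reduces to the essential case $L=G$ (where $\widehat{W}^L=W$ and no extension issue arises) by observing that $(q_{\rm nil},p_{\rm nil})_!\Q$ is the restriction of the global kernel $(\tau,p)_!\Q$ on $\B(T)\times\calG$, whose vertices are built from $p_!\Q=\IC_\calG(\mathcal{L})$ with its classical Springer $W$-action; working with the global Grothendieck--Springer sheaf rather than directly with $p^L_!\Q$ is what makes the equivariance of the vertices immediate. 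Second, your purity step is sound and matches Proposition \ref{ladderunique}(3) and the proposition of \S\ref{nil-diag}; once the equivariance of the arrows is actually established, the appeal to Proposition \ref{W-prop} closes the argument exactly as you describe.
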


\bigskip

\begin{remark} We have the following commutative diagram

$$
\xymatrix{\calB_\lambda\ar[d]_{(q_\lambda,p_\lambda)}&&[(z(\frak{l})^o+\u^L)/B^L]\ar[d]^{(q^L,p^L)}\ar[ll]\\
\calT_\lambda\times_{\car_\lambda}\calG_\lambda\ar[d]&&[z(\frak{l})^o/T]\times_{z(\frak{l})^o} [(z(\frak{l})^o+\frak{l}_{\rm nil})/L]\ar[ll]\ar[dll]\\
\ocalT_\lambda\times_{\car_\lambda}\calG_\lambda&&}
$$
where the top square is cartesian. From this diagram we see that

$$
\bN_\lambda|_{\calT_\lambda\times_\car\calG_\lambda}\simeq\calN|_{\calT_\lambda\times_\car\calG_\lambda}=(q_\lambda,p_\lambda)_!\Q
$$
where $\calN$ is as in \S \ref{kernel}. In other words, the complex $\calN_\lambda:=\calN|_{\calT_\lambda\times_\car\calG_\lambda}$ is $W$-equivariant.

\label{rem33}\end{remark}

\subsection{Proof of Theorem \ref{maintheo-descent}}\label{main-descent}

The essential case is when $L=G$ which case reduces to the nilpotent elements (see \S \ref{nil-diag}). We thus consider the following diagram 

$$
\xymatrix{&&\calB_{\rm nil}=[\frak{u}/B]\ar[rrd]^{p_{\rm nil}}\ar[lld]_{q_{\rm nil}}\ar[d]^{(q_{\rm nil},p_{\rm nil})}&&\\
\calT_{\rm nil}=\B(T)&&\hat{\calS}_{\rm nil}=\B(T)\times[\g_{\rm nil}/G]\ar[rr]^-{\pr_{\calG_{\rm nil}}}\ar[ll]_-{\pr_{\calT_{\rm nil}}}&&\calG_{\rm nil}=[\g_{\rm nil}/G]}
$$
and we want to prove that the complex $(q_{\rm nil},p_{\rm nil})_!\Q$ is $W$-equivariant and so descends to $\B(N)\times\calG_{\rm nil}$.
\bigskip

Consider now the diagram

$$
\xymatrix{&&\calB\ar[rrd]^p\ar[lld]_{\tau}\ar[d]^{(\tau,p)}&&\\
\B(T)&&\B(T)\times\calG\ar[rr]^-{\pr_{\calG}}\ar[ll]_-{\pr_{\B(T)}}&&\calG}
$$
Notice that the complex $(q_{\rm nil},p_{\rm nil})_!\Q$ is the restriction of $(\tau,p)_!\Q$ to $\B(T)\times\calG_{\rm nil}$. 
\bigskip

It is thus enough to prove that $(\tau,p)_!\Q$ is $W$-equivariant.

\bigskip

The map $(\tau,p)$ decomposes as 

$$
\xymatrix{\calB\ar[rr]^-{\delta}&&\hat{\calB}=\B(T)\times\calB\ar[rr]^-{\hat{p}:=1_{\B(T)}\times p}&&\hat{\calG}:=\B(T)\times\calG}
$$

Following the strategy of \S \ref{Torsors}, we see that the complex $(\tau,p)_!\Q$ is the outcome of the Postnikov diagram $\hat{p}_!\Lambda(\delta_!\Q)$.  The base of this Postnikov diagram is the complex

\begin{equation}
\IC_{\hat{\calG}}(\hat{\mathcal{L}})[-2n](-n)\overset{\partial_n}{\longrightarrow} X_*(T)\otimes\IC_{\hat{\calG}}(\hat{\mathcal{L}})[2-2n](1-n)\overset{\partial_{n-1}}{\longrightarrow}\cdots
\label{diagramIC}
\end{equation}
\begin{flushright}
$\cdots\overset{\partial_1}{\longrightarrow}\left(\bigwedge^nX_*(T)\right)\otimes\IC_{\hat{\calG}}(\hat{\mathcal{L}})$,
\end{flushright}
where $\hat{\mathcal{L}}$ is the semisimple local system $\Q\boxtimes \mathcal{L}$ with $\mathcal{L}=p_{\rss\, !}\Q$ where $p_\rss:\calB_\rss\rightarrow\calG_\rss$ is the restriction of $p$ to semisimple regular elements.
\bigskip

We consider on  vertices  of (\ref{diagramIC}) the $W$-action given by the Springer action  on $\IC_\calG(\mathcal{L})=p_!\Q$ and the obvious one on $X_*(T)$.

\begin{theorem}
The arrows of the complex (\ref{diagramIC}) are $W$-equivariant.
\label{Step1}\end{theorem}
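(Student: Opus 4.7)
I would first reduce to showing a single morphism is $W$-equivariant. All arrows $\partial_{n-i+1}$ in the complex (\ref{diagramIC}) are constructed from a single morphism — the Chern class map $\hat{p}_!(c_\delta) : \IC_{\hat{\calG}}(\hat{\mathcal{L}}) \to X_*(T) \otimes \IC_{\hat{\calG}}(\hat{\mathcal{L}})[2](1)$ — by wedging with elements of $\bigwedge^{i-1} X_*(T)$. Since the wedge construction is functorial for the natural $W$-action on $X_*(T)$, $W$-equivariance of all $\partial_i$ is equivalent to $W$-equivariance of this single morphism, with the target carrying the diagonal action (natural on $X_*(T)$, Springer on $\IC_{\hat{\calG}}(\hat{\mathcal{L}}) = \Q \boxtimes p_!\Q$).

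Next I would analyze $\hat{p}_!(c_\delta)$ by exploiting the product decompositions $\hat{\calB} = \B(T) \times \calB$, $\hat{\calG} = \B(T) \times \calG$, and the factorization $\hat{p} = 1_{\B(T)} \times p$. The cartesian diagram at the end of Section 4 identifies the classifying map of the $T$-torsor $\delta$ as the composition $\hat{\calB} \xrightarrow{1 \times \tau} \B(T) \times \B(T) \to \B(T)$, the last arrow being subtraction on the commutative group stack $\B(T)$. Since Chern classes on $\B(T)$ are primitive, this yields the decomposition
\[
c_\delta = c_{\mathrm{can}} \boxtimes 1 - 1 \boxtimes \tau^* c_{\mathrm{can}} \quad \text{in} \quad X_*(T) \otimes H^2(\hat{\calB}, \Q)(1),
\]
with $c_{\mathrm{can}}$ the universal Chern class on $\B(T)$. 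After applying $\hat{p}_!$ and using Künneth, the $W$-equivariance problem splits into (i) $W$-equivariance of $c_{\mathrm{can}} : \Q \to X_*(T) \otimes \Q[2](1)$ on $\B(T)$, and (ii) $W$-equivariance of $p_!(\tau^* c_{\mathrm{can}}) : p_!\Q \to X_*(T) \otimes p_!\Q[2](1)$ on $\calG$.

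Assertion (i) is elementary: under $H^2(\B(T),\Q)(1) \simeq X^*(T)$, the class $c_{\mathrm{can}}$ is the canonical pairing $\sum_i e_i \otimes e_i^* \in X_*(T) \otimes X^*(T)$, manifestly invariant under the diagonal $W$-action. Assertion (ii) is where Lemma \ref{canW} enters decisively: that lemma asserts the pullback $b^* : H^*(\B(B), \Q) \to H^*(\calB, \Q)$ along the vector bundle $b : \calB \to \B(B)$ is $W$-equivariant for the Springer actions, while the natural map $\B(B) \to \B(T)$ is a $U$-fibration inducing an isomorphism on cohomology that identifies the natural $W$-action on $H^*(\B(T),\Q)$ with the Springer action on $H^*(\B(B),\Q)$. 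Composing, $\tau^* c_{\mathrm{can}} \in X_*(T) \otimes H^2(\calB, \Q)(1)$ is $W$-invariant for the natural action on $X_*(T)$ and the Springer action on $H^*(\calB,\Q)$; applying $p_!$ and using the very definition of Springer's action on $p_!\Q$ transports this to the desired equivariance on $\calG$.

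The main obstacle is precisely this last step: reconciling the intrinsically-defined Springer action on $p_!\Q$ (a Borho–MacPherson construction over $\calG$) with Chern classes pulled back from $\B(T)$ through $\tau$, and verifying that passing through $p_!$ preserves the $W$-equivariance in the correct bookkeeping. Lemma \ref{canW} is the essential bridge: it anchors the Springer action on the cohomology of $\calB$ to the natural action of $W = N/T$ on $T$ via the inclusion of the base point $\B(B) \hookrightarrow \calB$, and this compatibility is the conceptual content of the theorem.
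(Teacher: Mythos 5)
Your reduction is the same as the paper's: you reduce the $W$-equivariance of all arrows of (\ref{diagramIC}) to that of the single composite map $X^*(T)\to H^2(\hat{\calB},\Q)(1)\to\ext^2_{\hat{\calG}}(\IC_{\hat{\calG}}(\hat{\mathcal{L}}),\IC_{\hat{\calG}}(\hat{\mathcal{L}}))(1)$ (Theorem \ref{maintheo2}), and your treatment of the first arrow — splitting $c_\delta$ by K\"unneth into $c_{\mathrm{can}}$ on the $\B(T)$ factor and $\tau^*c_{\mathrm{can}}$ on the $\calB$ factor, and using Lemma \ref{canW} to see that the latter lands $W$-equivariantly in $H^2(\calB,\Q)$ with its Springer action — is exactly Proposition \ref{WChern}. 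So far, so good.

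The gap is your last step. You dispose of the $W$-equivariance of the second arrow, namely the map $H^2(\calB,\Q)(1)=\ext^2_\calB(\Q,\Q)(1)\to\ext^2_\calG(\IC_\calG(\mathcal{L}),\IC_\calG(\mathcal{L}))(1)$ induced by the functor $p_!$, with the phrase ``applying $p_!$ and using the very definition of Springer's action transports this to the desired equivariance.'' This is not definitional and is in fact the hard point (Proposition \ref{Z} in the paper, flagged there as \emph{a priori} non-obvious because the $W$-action on $\hat{\calG}$ does not lift to $\hat{\calB}$): the source carries the Springer action on $H^*(\calB,\Q)=H^*(\calG,p_!\Q)$, the target carries the conjugation action on $\ext^\bullet(p_!\Q,p_!\Q)$, and there is no formal reason the functorial map intertwines them. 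The paper proves it by identifying $\ext^i_\calG(p_!\Q,p_!\Q)\simeq H_c^{-i}(\calZ,\Q)$ with $\calZ=\calB\times_\calG\calB$ the Steinberg stack, dualizing the map to the restriction $H_c^{-2}(\calZ,\Q)\to H_c^{-2}(\calB,\Q)$ along the diagonal, and then invoking Theorem \ref{diag-res}, whose proof requires the purity of $H_c^\bullet(\calZ,\Q)$ (Proposition \ref{purity}), degeneration of the stratification spectral sequence, Proposition \ref{IE}, and the identification of $\calH^{2i}(q',q')_!\Q$ with $\Delta_*\Q\otimes H^{2i}_c(\B(T),\Q)$ via the normalization $\Delta:W\times\t\to\t\times_\car\t$. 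That such a statement cannot be obtained by formal bookkeeping is precisely the lesson of \S\ref{calcul} (``Un calcul triste''): for the closely analogous map into $\ext^2(\IC_\calS,\IC_\calS)$ over $\calS=\t\times_\car\calG$, the corresponding equivariance \emph{fails} already for $\SL_2$, so any correct argument must use something special about pushing all the way down to $\calG$ — here, the geometry of the Steinberg stack. Your proposal is missing this entire ingredient.
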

\bigskip

Theorem \ref{Step1} will be a consequence of Theorem \ref{maintheo2} below.
\bigskip

\begin{corollary}
The complex (\ref{diagramIC}) descends to a unique complex $\overline{\Lambda}_b$ in $\dcb(\hat{\calG})$ whose vertices are
$$
\left(\bigwedge^{2n-i}X_*(T)\right)\otimes\left(\overline{\mathbb{Q}}_{\ell,\, \B(N)}\boxtimes \IC_\calG(\mathcal{L})\right)[2n-2i](n-i).
$$
If $k_o$ is a finite subfield of $k$ on which $G, T$ and $B$ are defined, then $\overline{\Lambda}_b$ is naturally defined over $k_o$.
\label{coromain}\end{corollary}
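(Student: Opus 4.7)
My plan is to apply the descent Lemma \ref{descentlem} to the $W$-torsor $\pi:\hat{\calG}=\B(T)\times\calG\to\B(N)\times\calG=:\overline{\hat{\calG}}$, where $W$ acts only on the first factor via the natural torsor $\B(T)\to\B(N)$. The essential input is Theorem \ref{Step1}.

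First I would equip each vertex of (\ref{diagramIC}) with its natural $W$-equivariant structure on $\hat{\calG}$. Using the K\"unneth decomposition $\IC_{\hat{\calG}}(\hat{\mathcal{L}})=\overline{\mathbb{Q}}_{\ell,\,\B(T)}\boxtimes\IC_\calG(\mathcal{L})$, I would form the tensor of the obvious $W$-action on $\bigwedge^{2n-i}X_*(T)$, the canonical structure on $\overline{\mathbb{Q}}_{\ell,\,\B(T)}$ arising from the torsor, and the Springer action on $\IC_\calG(\mathcal{L})=p_!\Q$. Theorem \ref{Step1} then asserts exactly that the differentials $\partial_i$ are $W$-equivariant for these structures, so the whole complex (\ref{diagramIC}) upgrades to a complex in $\dcb(\hat{\calG},W)$.

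Next I would invoke Lemma \ref{descentlem}: the functor $\pi_1^*:\dcb(\overline{\hat{\calG}})\to\dcb(\hat{\calG},W)$ is an equivalence of categories. Applying its quasi-inverse vertex-by-vertex and arrow-by-arrow yields a complex $\overline{\Lambda}_b$ in $\dcb(\overline{\hat{\calG}})$ whose pullback under $\pi_1^*$ is (\ref{diagramIC}); uniqueness up to unique isomorphism follows from full faithfulness. To identify the descended vertices I would use that $\pi_1^*(\overline{\mathbb{Q}}_{\ell,\,\B(N)})$ equals $\overline{\mathbb{Q}}_{\ell,\,\B(T)}$ with its canonical $W$-structure and that $\pi_1^*$ is the identity on the $\calG$-factor, recovering the expression stated in the corollary.

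For the $k_o$-rationality, I would note that when $G$, $T$, and $B$ are defined over $k_o$ all ingredients---the torsor $\pi$, the sheaf $\IC_\calG(\mathcal{L})$, the Springer action, and the Chern-class morphisms defining (\ref{diagramIC})---are naturally defined over $k_o$. Since the descent equivalence of Lemma \ref{descentlem} commutes with $\mathrm{Gal}(k/k_o)$, the complex $\overline{\Lambda}_b$ inherits a canonical $k_o$-structure. The real work in this corollary lies upstream in Theorem \ref{Step1}; once the $W$-equivariance of the differentials is in hand, the present statement is a formal consequence of Lemma \ref{descentlem}.
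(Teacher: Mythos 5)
Your proposal is correct and follows essentially the same route as the paper: the paper's (very terse) proof likewise deduces both assertions from the descent equivalence $\pi_1^*$ of Lemma \ref{descentlem} applied to the $W$-torsor $\B(T)\times\calG\rightarrow\B(N)\times\calG$, with the $W$-equivariance of the arrows supplied by Theorem \ref{Step1} and the $k_o$-structure coming from the fact that all the geometric constructions are defined over $k_o$. Your write-up just makes explicit the equivariant structures on the vertices and the full-faithfulness argument for uniqueness, which is exactly what the paper leaves implicit.
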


\begin{theorem}
The complex $\overline{\Lambda}_b$ of Corollary \ref{coromain} can be completed into a unique Postnikov diagram $\overline{\Lambda}$ (up to a unique isomorphism) such that for any finite subfield $k_o$ of $k$ on which $G,T$ and $B$ are defined, the diagram $\overline{\Lambda}$ is also defined over $k_o$ with $\overline{\Lambda}_b$ equipped with its natural $k_o$-structure.
\bigskip

We have
$$
\overline{\Lambda}|_{\hat{\calG}}\simeq\hat{p}_!\Lambda(\delta_!\Q).
$$
In particular the outcome $(\tau,p)_!\Q$ of $\hat{p}_!\Lambda(\delta_!\Q)$ descends naturally to $\B(N)\times\calG$.
\label{Maintheo1}\end{theorem}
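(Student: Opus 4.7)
The plan is to invoke Proposition~\ref{W-prop} for the $W$-torsor $\pi\times 1_\calG:\hat{\calG}=\B(T)\times\calG\to \B(N)\times\calG$, applied to the Postnikov diagram $\Lambda:=\hat{p}_!\Lambda(\delta_!\Q)$ in $\dcb(\hat{\calG})$, whose base $\Lambda_b$ is the complex (\ref{diagramIC}). By Theorem~\ref{Step1} the arrows of $\Lambda_b$ are $W$-equivariant, so Corollary~\ref{coromain} already provides the descent of the base to a complex $\overline{\Lambda}_b\in\dcb(\B(N)\times\calG)$, naturally defined over any $k_o$ over which $G$, $T$, $B$ are.

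The main step, and the principal difficulty, is to verify the Hom-vanishing hypotheses (\ref{COND1})--(\ref{COND2}) on the vertices $A_i=\bigwedge^{n-i}X_*(T)\otimes M[-2i](-i)$ of $\Lambda_b$, where $M:=\IC_{\hat{\calG}}(\hat{\mathcal{L}})=\overline{\mathbb{Q}}_{\ell,\B(T)}\boxtimes p_!\Q$. After factoring out the finite-dimensional coefficient spaces, these reduce to the vanishing of $\mathrm{Ext}^{2(j-i)}(M,M(j-i))$ for $j<i$ (automatic, as negative Ext between constructible sheaves vanish) and of $\mathrm{Ext}^{2(j-i)-k}(M,M(j-i))$ for $j>i$ and $k\geq 1$. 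K\"unneth gives
\[
\mathrm{Ext}^{\bullet}(M,M)=\bigoplus_{a+b=\bullet}H^a(\B(T),\Q)\otimes \mathrm{Ext}^b_\calG(p_!\Q,p_!\Q),
\]
and the same manipulation as in the proof of Proposition~\ref{Ext} identifies $\mathrm{Ext}^b_\calG(p_!\Q,p_!\Q)\cong H_c^{-b}(\calZ,\Q)$. Both tensor factors vanish in odd degree and are pure in even degree, by the standard computation of $H^\bullet(\B(T),\Q)$ on the one hand and by Proposition~\ref{purity} on the other. Therefore $\mathrm{Ext}^{\mathrm{odd}}(M,M)=0$ and $\mathrm{Ext}^{2\ell}(M,M)$ is pure of weight $2\ell$. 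The required vanishing follows over $k$, and over each $k_o$ it follows from the Frobenius exact sequence (\ref{BBD}) combined with purity, exactly as in the proof of Proposition~\ref{ladderunique}(3).

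Proposition~\ref{W-prop} then produces a unique (up to a unique isomorphism) Postnikov diagram $\overline{\Lambda}$ in $\dcb(\B(N)\times\calG)$ with base $\overline{\Lambda}_b$ and satisfying $(\pi\times 1_\calG)^*\overline{\Lambda}\simeq\Lambda$. The Hom-vanishing established over $k_o$ simultaneously lets us apply Proposition~\ref{laddercompletion} directly to $(\overline{\Lambda}_b)_o$, and its uniqueness clause, combined with the uniqueness already used over $k$, forces the $k_o$-completion to base change to $\overline{\Lambda}$; hence $\overline{\Lambda}$ is naturally defined over $k_o$. Taking outcomes now produces a complex on $\B(N)\times\calG$ whose pullback under $\pi\times 1_\calG$ is the outcome $(\tau,p)_!\Q$ of $\Lambda$, which is the announced descent.
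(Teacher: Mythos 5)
Your plan follows the paper's architecture (descend the base via Theorem \ref{Step1} and Corollary \ref{coromain}, then complete it using Proposition \ref{W-prop} and the purity statement Proposition \ref{Ext}), but there is a genuine error at the decisive step: the vanishing hypotheses (\ref{COND1})--(\ref{COND2}) do \emph{not} hold over $k$, contrary to your claim that ``the required vanishing follows over $k$''. Take $j=i+1$ and $k=2$ in (\ref{COND2}): up to tensoring with the finite-dimensional coefficient spaces $\bigwedge^{\bullet}X_*(T)$, the group $\mathrm{Hom}(A_{i+1},A_i[-2])$ is $\mathrm{Ext}^0\bigl(\IC_{\hat{\calG}}(\hat{\mathcal{L}}),\IC_{\hat{\calG}}(\hat{\mathcal{L}})(1)\bigr)$, which over the algebraically closed field $k$ is just $\mathrm{End}\bigl(\IC_{\hat{\calG}}(\hat{\mathcal{L}})\bigr)\neq 0$ up to a Tate twist. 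Purity of weight $-2$ does not make this space vanish; it only kills its Frobenius invariants (and the coinvariants of the adjacent odd group) via (\ref{BBD}). So Proposition \ref{W-prop} cannot be invoked in $\dcb(\hat{\calG})$ over $k$ to produce $\overline{\Lambda}$, and there is no ``uniqueness over $k$'' available for the compatibility argument in your last paragraph.

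The paper's proof sidesteps this by working entirely over a finite subfield: by the proof of Proposition \ref{ladderunique}(3) (Proposition \ref{Ext} plus the exact sequence (\ref{BBD})), the hypotheses of Proposition \ref{W-prop} hold for the $k_o$-forms, so Proposition \ref{W-prop} applied over $k_o$ yields a unique Postnikov diagram $\overline{\Lambda}_o$ completing the natural $k_o$-descent of $\overline{\Lambda}_b$ and satisfying $\pi_o^*(\overline{\Lambda}_o)\simeq\Lambda_o$; the diagram $\overline{\Lambda}$ of the theorem is then \emph{defined} as the extension of scalars of $\overline{\Lambda}_o$ from $k_o$ to $k$, which gives $\overline{\Lambda}|_{\hat{\calG}}\simeq\hat{p}_!\Lambda(\delta_!\Q)$ by base change. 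This is also why the uniqueness in the statement is formulated relative to $k_o$-structures rather than absolutely over $k$. The $k_o$-verification in the second half of your argument is correct and is exactly what is needed; the fix is to discard the over-$k$ application of Proposition \ref{W-prop} and to take $\overline{\Lambda}$ to be the base change of the unique $k_o$-completion.
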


\begin{proof}
We choose a finite subfield $k_o$ on which $G$, $T$ and $B$ are defined. By the proof of Proposition \ref{ladderunique} , the conditions of Proposition \ref{W-prop} are satisfied over $k_o$. Therefore by Proposition \ref{W-prop} there is a unique Postnikov diagram $\overline{\Lambda}_o$ that completes the natural descent of $\overline{\Lambda}_b$ to $k_o$. The wanted Postnikov diagram $\overline{\Lambda}$ is the one induced by $\overline{\Lambda}_o$ from $k_o$ to $k$.
\end{proof}
\bigskip

As explained in \S \ref{W-equivariant}, the group $W$ acts on 

$$
\ext^i_{\hat{\calG}}\left(\IC_{\hat{\calG}}(\hat{\mathcal{L}}),\IC_{\hat{\calG}}(\hat{\mathcal{L}})\right)={\rm Hom}\left(\IC_{\hat{\calG}}(\hat{\mathcal{L}}),\IC_{\hat{\calG}}(\hat{\mathcal{L}})[i]\right).
$$

\bigskip

\begin{theorem}
The map
$$
X^*(T)\overset{c_{\delta}}{\longrightarrow} H^2(\hat{\calB},\Q)(1)=\ext^2_{\hat{\calB}}(\Q,\Q)(1)\longrightarrow\ext^2_{\hat{\calG}}\left(\IC_{\hat{\calG}}(\hat{\mathcal{L}}),\IC_{\hat{\calG}}(\hat{\mathcal{L}})\right)(1),
$$
where the first arrow is the Chern class morphism of $\delta$ and the second arrow is given by $\hat{p}_!$, is $W$-equivariant.
\label{maintheo2}\end{theorem}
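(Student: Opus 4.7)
The plan is to split the map along the Künneth decomposition $\hat{\calB}=\B(T)\times\calB$ and treat each component separately, reducing $W$-equivariance in turn to the canonical Chern class isomorphism, to Lemma~\ref{canW}, and to Theorem~\ref{diag-res}. The $T$-torsor $\delta$ is the pullback of the universal $T$-torsor $\mathrm{Spec}(k)\to\B(T)$ along $\hat{\calB}=\B(T)\times\calB\xrightarrow{\id\times\tau}\B(T)\times\B(T)\xrightarrow{\mathrm{diff}}\B(T)$, so in the Künneth splitting $H^2(\hat{\calB},\Q)(1)=H^2(\B(T),\Q)(1)\oplus H^2(\calB,\Q)(1)$ we have $c_\delta(\chi)=(c_{\mathrm{can}}(\chi),-\tau^*c_{\mathrm{can}}(\chi))$. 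Applying $\hat{p}_!$ (which is essentially $\id\boxtimes p_!$ on Künneth factors since $\hat{p}_!\Q=\Q\boxtimes p_!\Q=\IC_{\hat{\calG}}(\hat{\mathcal{L}})$), the composition sends $\chi$ to $(c_{\mathrm{can}}(\chi)\otimes\id_{p_!\Q},\,-p_!(\tau^*c_{\mathrm{can}}(\chi)))$ in $H^2(\B(T),\Q)(1)\otimes\mathrm{End}^0(p_!\Q)\;\oplus\;\ext^2_\calG(p_!\Q,p_!\Q)(1)$. Unwinding Remark~\ref{W-Hom} with $\theta_w=\id\boxtimes w_{\Sp}$ coupled with the natural $W$-action on the $\B(T)$-factor of $\hat{\calG}$, this Künneth splitting is $W$-stable: $W$ acts on the first summand by its natural action via $W\curvearrowright T$ (tensored with the trivial action on $\id_{p_!\Q}$), and on the second summand by Springer conjugation $f\mapsto w_{\Sp}\circ f\circ w_{\Sp}^{-1}$.

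The first component is tautologically $W$-equivariant, since $c_{\mathrm{can}}:X^*(T)\to H^2(\B(T),\Q)(1)$ is the canonical Chern class isomorphism intertwining the natural $W$-actions. For the second component I would factor $\tau=\pi_T\circ b$, where $b:\calB\to\B(B)$ is the vector bundle with fiber $\b$ and $\pi_T:\B(B)\to\B(T)$ is the $U$-fibration induced by $B\twoheadrightarrow T$. Then $\tau^*c_{\mathrm{can}}$ factors as $X^*(T)\xrightarrow{\pi_T^*\circ c_{\mathrm{can}}}H^2(\B(B),\Q)(1)\xrightarrow{b^*}H^2(\calB,\Q)(1)$; the first arrow is a $W$-equivariant isomorphism for the natural actions on both sides, while Lemma~\ref{canW} supplies the $W$-equivariance of $b^*$ from the natural action to the Springer action. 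Hence $\tau^*c_{\mathrm{can}}$ is $W$-equivariant into $H^2(\calB,\Q)(1)$ equipped with its Springer action.

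The hard part will be showing that $p_!:H^2(\calB,\Q)\to\ext^2_\calG(p_!\Q,p_!\Q)$ is itself $W$-equivariant, from Springer on the source to Springer conjugation on the target. The plan is to identify $\ext^i_\calG(p_!\Q,p_!\Q)\simeq H^{i+2d}(\calZ,\Q)(d)$ (with $d=\dim\calB$ and $\calZ=\calB\times_\calG\calB$ the Steinberg stack) via the projection formula together with the self-duality $Dp_!\Q\simeq p_!\Q[2d](d)$, which comes from properness of $p$ and smoothness of $\calB$. Under this identification Springer conjugation on $\ext^*$ corresponds to the diagonal Springer action on $H^*(\calZ,\Q)=H^*(\calG,p_!\Q\otimes p_!\Q)$, and $p_!$ becomes the Gysin-type pushforward $\Delta_*$ along the diagonal $\Delta:\calB\to\calZ$. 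Theorem~\ref{diag-res} asserts the $W$-equivariance of the diagonal restriction $\Delta^*$ from the diagonal action to the Springer action, and by Verdier duality $\Delta_*$ carries the dual equivariance (Springer to diagonal), which is exactly what we need. The technical subtlety is that $\calZ$ is singular, so the Gysin--duality formalism must be handled with care; a clean workaround is to perform the duality directly on $p_!\Q\otimes^L Dp_!\Q\in\dcb(\calG)$ rather than on the cohomology of $\calZ$.
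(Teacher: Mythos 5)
Your proposal follows essentially the same route as the paper: the Künneth splitting of $c_\delta$ with the $\B(T)$-component handled by the canonical Chern class and the $\calB$-component by Lemma \ref{canW} is exactly Proposition \ref{WChern}, and your treatment of the pushforward step --- identifying $\ext^i_\calG(p_!\Q,p_!\Q)$ with (compactly supported) cohomology of the Steinberg stack $\calZ$ and dualizing so that the map becomes the diagonal restriction governed by Theorem \ref{diag-res} --- is precisely the paper's Proposition \ref{Z}, where the singularity of $\calZ$ is sidestepped by the same adjunction/duality computation you sketch. So the argument is correct and matches the paper's proof.
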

\bigskip

This is \emph{apriori} not obvious as the natural action of $W$ on $\hat{\calG}$ does not lift to $\hat{\calB}$.
\bigskip

We have the following proposition.

\begin{proposition}
The Chern class morphism
$$
c_\delta:X^*(T)\rightarrow H^2(\hat{\calB},\Q)(1)
$$
of the $T$-torsor $\delta$ is $W$-equivariant.
\label{WChern}\end{proposition}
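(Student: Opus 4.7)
The strategy is to recognize $\delta$ as a pullback of the universal $T$-torsor on $\B(T)$ along the first projection of $\hat{\calB} = \B(T) \times \calB$. Under this identification the proposition reduces to the $W$-equivariance of the canonical Chern class of $\B(T)$, which is a standard naturality statement.

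Using the identification $\calT \simeq \t \times \B(T)$, one has $\hat{\calB} = \calT \times_\t \calB \simeq \B(T) \times \calB$, and the morphism $\delta = (q, 1_\calB) : \calB \to \hat{\calB}$ becomes $\delta = (\tau, 1_\calB)$, where $\tau : \calB \to \B(T)$ is the structure morphism induced by $B \to T$. In particular, $\delta$ is the pullback of the universal $T$-torsor $\mathrm{Spec}(k) \to \B(T)$ along the first projection $\pr_1 : \hat{\calB} \to \B(T)$, and the functoriality of Chern classes gives
\[
c_\delta = \pr_1^* \circ c_{\mathrm{can}},
\]
where $c_{\mathrm{can}} : X^*(T) \to H^2(\B(T), \Q)(1)$ is the canonical Chern class.

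The relevant $W$-action on $H^2(\hat{\calB}, \Q)(1)$ is the one induced by the action of $W = N/T$ on $\hat{\calB} = \B(T) \times \calB$ given by $w \cdot (x, b) = (w \cdot x, b)$, i.e.\ the natural action on the first factor and the trivial action on the second. With this convention the projection $\pr_1 : \hat{\calB} \to \B(T)$ is tautologically $W$-equivariant, so that $\pr_1^*$ intertwines the $W$-actions on cohomology. The proposition therefore reduces to the $W$-equivariance of $c_{\mathrm{can}}$, which is standard: for $w \in W$, the automorphism of $\B(T)$ induced by $w$ pulls back the universal line bundle attached to $\chi \in X^*(T)$ onto the one attached to $w^*\chi$, whence $w^* c_{\mathrm{can}}(\chi) = c_{\mathrm{can}}(w^*\chi)$.

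The argument is essentially formal once $\delta$ is identified correctly, so there is no genuine obstacle. The only conceptual subtlety is to notice that Proposition \ref{WChern} concerns the \emph{natural} (non-Springer) $W$-action induced on $\hat{\calB}$ from the $W$-action on the $\B(T)$-factor, in contrast with the Ext groups on $\hat{\calG}$ appearing in Theorem \ref{maintheo2} whose $W$-action incorporates the Springer structure on $p_!\Q$; reducing to $\B(T)$ via the pullback formula $c_\delta = \pr_1^* \circ c_{\mathrm{can}}$ sidesteps the Springer side entirely, leaving only a standard naturality check.
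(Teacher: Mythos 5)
Your identification of the torsor $\delta$ is incorrect, and this is not a cosmetic slip: it removes exactly the part of the statement that has content. Under the canonical identification $\hat{\calB}=\B(T)\times\calB$, the map $\delta=(\tau,1_\calB)$ is \emph{not} the pullback of the universal $T$-torsor along $\pr_1$; that would force $\tau$ to be the constant map, whereas $\tau$ classifies the nontrivial $T$-torsor $\dot{\calB}=[\b/U]\rightarrow[\b/B]=\calB$. The correct classifying map of $\delta$ is the composite $\hat{\calB}\rightarrow\B(T)\times\B(T)\rightarrow\B(T)$, where the second arrow is the ``difference'' map $(t,h)\mapsto th^{-1}$ (this is precisely the cartesian diagram given in the paper just before \S\ref{Torsors}). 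Consequently, under the K\"unneth decomposition $H^2(\hat{\calB},\Q)(1)=H^2(\calB,\Q)(1)\oplus H^2(\B(T),\Q)(1)$ one has $c_\delta=c_\pi\oplus(-c_{\mathrm{can}})$, where $c_\pi$ is the Chern class of $\dot{\calB}\rightarrow\calB$; your formula $c_\delta=\pr_1^*\circ c_{\mathrm{can}}$ drops the component $c_\pi$ (and has the wrong sign on the other one). Since $c_\pi$ is essentially an isomorphism onto $H^2(\calB,\Q)(1)$, the component you discard is the whole point.

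Relatedly, the $W$-action you put on $H^2(\hat{\calB},\Q)(1)$ (natural on the $\B(T)$ factor, trivial on the $\calB$ factor) is not the one the proposition is about: on the summand $H^2(\calB,\Q)(1)$ the relevant action is the Springer action, because Proposition \ref{WChern} is composed in Theorem \ref{maintheo2} with the map to $\ext^2_{\hat{\calG}}\left(\IC_{\hat{\calG}}(\hat{\mathcal{L}}),\IC_{\hat{\calG}}(\hat{\mathcal{L}})\right)(1)$, whose equivariance (Proposition \ref{Z}) is established via Theorem \ref{diag-res} for the Springer actions. With the trivial action on $H^2(\calB,\Q)(1)$, the equivariance of $c_\pi$ would in fact be false. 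The genuine content of the proposition is precisely that $c_\pi$ is $W$-equivariant for the Springer action, which the paper proves by factoring $c_\pi$ through $H^2(\B(B),\Q)\rightarrow H^2(\calB,\Q)$ and invoking Lemma \ref{canW} (the Springer action on $H^*(\calB)$ agrees with the natural $W$-action transported from $H^*(\B(T))\simeq H^*(\B(B))$). Your closing remark that the argument ``sidesteps the Springer side entirely'' is therefore the symptom of the gap rather than a simplification: no correct proof of this proposition can avoid comparing the Springer action on $H^2(\calB,\Q)$ with the natural action of $W$ on $X^*(T)$.
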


\begin{proof}
As $\hat{\calB}=\B(T)\times\calB$, by K\"unneth formula we have
$$
H^2(\hat{\calB},\Q)(1)= H^2(\calB,\Q)(1)\oplus H^2(\B(T),\Q)(1).
$$
Then
$$
c_\delta=c_\pi\oplus -c_\mathrm{can},
$$
where $\pi$ is the $T$-torsor $\dot{\calB}\rightarrow \calB$. The Chern class morphism $c_\mathrm{can}$ of the trivial $T$-torsor $\mathrm{Spec}(k)\rightarrow\B(T)$ is $W$-equivariant.

We thus need to prove that $c_\pi$ is $W$-equivariant. 
\bigskip

The Chern class morphism $c_\pi$  decomposes as
$$
X^*(T)\rightarrow H^2(\B(T),\Q)\simeq H^2(\B(B),\Q)\rightarrow H^2(\calB,\Q),
$$
where the first map is $c_\mathrm{can}$ and the second one is the cohomological restriction  of the canonical map $\calB\rightarrow\B(B)$. It is $W$-equivariant by Lemma \ref{canW}.
\end{proof}

Theorem \ref{maintheo2} is a consequence of Proposition \ref{WChern} together with the following one.

\begin{proposition}
The map
\begin{equation}
H^2(\hat{\calB},\Q)(1)=\ext^2_{\hat{\calB}}(\Q,\Q)(1)\rightarrow\ext^2_{\hat{\calG}}\left(\IC_{\hat{\calG}}(\hat{\mathcal{L}}),\IC_{\hat{\calG}}(\hat{\mathcal{L}})\right)(1)
\label{Func}\end{equation}
induced by $\hat{p}_!$ is $W$-equivariant.
\label{Z}\end{proposition}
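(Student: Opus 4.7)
The plan is to reduce Proposition \ref{Z} to Theorem \ref{diag-res} via Verdier duality, identifying the pushforward $\hat{p}_!$ on $\mathrm{Ext}$ with the Verdier transpose of the diagonal restriction on compactly supported cohomology.

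Set $\hat{\calZ}:=\hat{\calB}\times_{\hat{\calG}}\hat{\calB}$, which by proper base change is isomorphic to $\B(T)\times\calZ$ (where $\calZ=\calB\times_\calG\calB$), and let $\pr_1,\pr_2:\hat{\calZ}\to\hat{\calB}$ denote the two projections. The $(\hat{p}_!,\hat{p}^!)$-adjunction combined with proper base change yields
$$
\ext^i_{\hat{\calG}}\bigl(\hat{p}_!\Q,\hat{p}_!\Q\bigr)=\hom_{\hat{\calB}}\bigl(\Q,\hat{p}^!\hat{p}_!\Q[i]\bigr)=H^i\bigl(\hat{\calZ},\pr_2^!\Q_{\hat{\calB}}\bigr),
$$
and Verdier duality on $\hat{\calZ}$ and $\hat{\calB}$ (using $\omega_{\hat{\calB}}=\Q[-2n](-n)$ as $\dim\hat{\calB}=-n$) gives isomorphisms
$$
\ext^i_{\hat{\calG}}\bigl(\hat{p}_!\Q,\hat{p}_!\Q\bigr)\simeq H^{-i-2n}_c(\hat{\calZ},\Q)^\vee(n)\quad\text{and}\quad H^i(\hat{\calB},\Q)\simeq H^{-i-2n}_c(\hat{\calB},\Q)^\vee(n).
$$

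Next, I would show that under these identifications the map $\hat{p}_!$ corresponds to the Verdier transpose of the diagonal restriction
$$
\hat{\Delta}^{*}:H^j_c(\hat{\calZ},\Q)\longrightarrow H^j_c(\hat{\calB},\Q),
$$
where $\hat{\Delta}:\hat{\calB}\hookrightarrow\hat{\calZ}$ is the diagonal embedding; equivalently, for $\alpha\in H^i(\hat{\calB},\Q)$, the image $\hat{p}_!(\alpha)$ is the Borel--Moore pushforward $\hat{\Delta}_*(\alpha)$. This is the technical heart of the argument and follows by unwinding the $(\hat{p}_!,\hat{p}^!)$-adjunction: for such an $\alpha$, $\hat{p}_!(\alpha)$ corresponds to $\eta[i]\circ\alpha$, where $\eta:\Q_{\hat{\calB}}\to\hat{p}^!\hat{p}_!\Q_{\hat{\calB}}=\pr_{1,*}\pr_2^!\Q_{\hat{\calB}}$ is the unit of the adjunction, and $\eta$ is represented via $(\pr_1^*,\pr_{1,*})$-adjunction by the fundamental class of $\hat{\Delta}$ in $H^0(\hat{\calZ},\pr_2^!\Q_{\hat{\calB}})$.

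Finally, since $\hat{\calZ}\simeq\B(T)\times\calZ$ and $\hat{\Delta}=\mathrm{id}_{\B(T)}\times\Delta$, the K\"unneth formula gives $\hat{\Delta}^{*}=\mathrm{id}\otimes\Delta^{*}$, which is $W$-equivariant by Theorem \ref{diag-res} applied to the $\calZ$-factor (the $\B(T)$-factor contributing trivially). Dualizing yields the $W$-equivariance of $\hat{p}_!$, and hence of the map in Proposition \ref{Z}. The principal obstacle is the identification in the second step, which requires careful bookkeeping with adjunctions and Tate twists coming from $\omega_{\hat{\calB}}$, but it is otherwise a standard consequence of proper base change and Verdier duality.
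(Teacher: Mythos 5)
Your proposal is correct and follows essentially the same route as the paper: identify $\ext^\bullet_{\hat{\calG}}(\hat{p}_!\Q,\hat{p}_!\Q)$ with the (dual of the) compactly supported cohomology of the Steinberg stack $\hat{\calB}\times_{\hat{\calG}}\hat{\calB}$ via adjunction, base change and duality, recognize the map induced by $\hat{p}_!$ as the transpose of restriction along the diagonal, strip off the $\B(T)$ factor by K\"unneth, and conclude by Theorem \ref{diag-res}. The only differences are cosmetic (the paper applies K\"unneth at the outset and uses the $(p^*,p_*)$-adjunction, while you use $(\hat{p}_!,\hat{p}^!)$ and spell out that the unit corresponds to the fundamental class of the diagonal, a point the paper leaves implicit).
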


\begin{proof} By K\"unneth formula we are reduced to prove that the same statement is true with $\hat{\calB}$ and $\hat{\calG}$ replaced by $\calB$ and $\calG$, i.e. we need to see that the map

\begin{equation}
H^2(\calB,\Q)(1)=\ext^2_{\calB}(\Q,\Q)(1)\rightarrow\ext^2_{\calG}\left(\IC_\calG(\mathcal{L}),\IC_\calG(\mathcal{L})\right)(1)
\label{Func1}\end{equation}
given by the functor $p_!$ is $W$-equivariant.
\bigskip

Consider the cartesian diagram

$$
\xymatrix{\calZ\ar[r]^-{\pr_2}\ar[d]_-{\pr_1}&\calB\ar[d]^p\\
\calB\ar[r]^p&\calG}
$$
We have
\begin{align*}
\ext^i_\calG(\IC_\calG(\mathcal{L}),\IC_\calG(\mathcal{L}))&={\rm Hom}(p_!\Q,p_!\Q[i])\\
&={\rm Hom}(p^*p_!\Q,\Q[i])\\
&=H^i(\calZ,\pr_1^!\Q)\\
&=H^{-i}_c(\calZ,\Q).
\end{align*}

The dual of the map (\ref{Func1}) is the cohomological restriction 

$$
H_c^{-2}(\calZ,\Q)\longrightarrow H_c^{-2}(\calB,\Q)
$$
of the diagonal morphism $\calB\rightarrow\calZ$. The proposition is thus a consequence of Theorem \ref{diag-res}.
\end{proof}

\subsection{Descent over regular elements}\label{descentrege}

We consider the diagram (\ref{DIAGvert}) over regular elements
\bigskip
$$
\xymatrix{\calB_\reg\ar[rr]^{\delta_\reg}\ar@/^2.0pc/@[red][rrrr]^{(q,p)_\reg}&&\hat{\calB}_\reg\ar[rr]^{\hat{f}_\reg}&&\hat{\calS}_\reg}
$$
and put 
$$
\calN_\reg:=(q,p)_{\reg\, !}\Q=\calN|_{\hat{\calS}_\reg}.
$$
\begin{theorem}The kernel $\calN_\reg$ is $W$-equivariant, i.e. it descends to $\ocalT\times_\car\calG_\reg$.
\end{theorem}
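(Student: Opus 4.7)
My plan is to reproduce, above the open substack $\calG_\reg$, the Postnikov-descent argument that proves Theorem \ref{maintheo-descent} in the nilpotent case (\S\ref{main-descent}). Concretely, I would apply Proposition \ref{W-prop} to the $W$-torsor $\pi\times 1:\hat{\calS}_\reg\to\ocalT\times_\car\calG_\reg$, taking as input the restriction to $\hat{\calS}_\reg$ of the Postnikov diagram $\Lambda(\calN)=\hat{f}_!\,\Lambda(\delta_!\Q)$ from Proposition \ref{ladderunique}(2).

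The base of this restricted Postnikov diagram has vertices $\bigwedge^i X_*(T)\otimes \IC_{\hat{\calS}_\reg}$ (up to shifts and Tate twists), with differentials induced by $\hat{f}_{\reg\,!}$ applied to the Chern-class morphism $c_\delta$. The vertices carry a natural $W$-equivariant structure: $\IC_{\hat{\calS}_\reg}$ is equivariant for the $W$-action on $\hat{\calS}_\reg$ coming from the $N/T=W$-action on $\calT$ (trivial on $\calG_\reg$), and $X_*(T)$ is a $W$-representation, so by Lemma \ref{descentlem} these vertices descend to complexes on $\ocalT\times_\car\calG_\reg$. For the differentials one needs $W$-equivariance in $\ext^2_{\hat{\calS}_\reg}(\IC_{\hat{\calS}_\reg},\IC_{\hat{\calS}_\reg})(1)$. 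The input here is Proposition \ref{WChern}, which gives $W$-equivariance of $c_\delta$ on $\hat{\calB}$ for the Springer action on $H^2(\calB,\Q)$; I would then argue, parallel to Theorem \ref{maintheo2} and Proposition \ref{Z}, that pushforward by $\hat{f}_\reg$ identifies the Springer action on $\hat{f}_{\reg\,!}\Q = \IC_{\hat{\calS}_\reg}$ with the canonical $W$-equivariant structure coming from the $W$-action on $\hat{\calS}_\reg$. This identification is essentially the content of Stein factorization: by Theorem \ref{Steinpro}(ii) one has $(q',p)_!\Q=\IC_\calS$, and by construction the Springer action on $p_!\Q$ is the pushforward via $\pr_2$ of the canonical $W$-action on $\IC_\calS$; restricting to the regular locus, where $(q',p)$ is an isomorphism over the dense open $\calS_\rss$, makes the identification transparent, and the intermediate-extension property of $\IC$ propagates it to $\hat{\calS}_\reg$.

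To invoke Proposition \ref{W-prop} I must also verify the Hom-vanishing conditions (\ref{COND1}) and (\ref{COND2}) on the base. As in the proof of Proposition \ref{ladderunique}(3), these reduce, by the weight arguments of the exact sequence (\ref{BBD}), to showing that $\ext^j_{\hat{\calS}_\reg}(\IC_{\hat{\calS}_\reg},\IC_{\hat{\calS}_\reg})$ vanishes for odd $j$ and is pure of weight $j$ for even $j$. Poincar\'e duality together with K\"unneth for $\hat{\calS}_\reg=\B(T)\times\calS_\reg$ identifies these Ext groups with $H^*_c(\calY_\reg,\Q)$, where $\calY_\reg=\calB_\reg\times_{\calS_\reg}\calB_\reg$, so the desired purity and odd-degree vanishing follow from restricting the stratification argument of Proposition \ref{purity} to the regular locus. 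Granted these inputs, Proposition \ref{W-prop} produces the descended Postnikov diagram on $\ocalT\times_\car\calG_\reg$, and hence $\calN_\reg$ descends. The main technical obstacle I anticipate is the middle step—the regular-locus analogue of Theorem \ref{maintheo2}—namely the identification of the Springer and canonical $W$-actions after pushforward by $\hat{f}_\reg$; I expect this to be handled by a diagonal-restriction argument in the spirit of Theorem \ref{diag-res}, with $\calY$ replacing $\calZ$, to make the compatibility completely canonical.
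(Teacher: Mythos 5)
The core of your argument for the equivariance of the base of the restricted Postnikov diagram --- vertices via the canonical equivariance of $\IC_{\hat{\calS}_\reg}$, arrows via Proposition \ref{WChern} together with the identification of the Springer structure with the canonical one over $\calG_\rss$ propagated by intermediate extension --- is indeed how the paper proceeds (the paper phrases the last point dually, as the $W$-equivariance of the restriction $H_c^{-2}(\calS_\reg)\rightarrow H_c^{-2}(\calB_\reg)$, and simplifies the vertices by observing that $\hat{f}_\reg$ is an isomorphism, so they are constant sheaves). The genuine gap is in your verification of the hypotheses of Proposition \ref{W-prop}. Since $(q',p)$ is an isomorphism over the regular locus, $\calY_\reg=\calB_\reg\times_{\calS_\reg}\calB_\reg\simeq\calB_\reg=[\b_\reg/B]$, and the stratification argument of Proposition \ref{purity} does not restrict to this locus: the fibres of $\calY_{w,\reg}\rightarrow\mathcal{O}_w$ are the $G$-regular loci of $\t^w+\u_w$, which are proper open subsets of affine spaces. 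In fact the asserted odd-degree vanishing and purity are false: already for $G=\SL_2$ one has $\b_\reg=\b\setminus\{0\}$, the $T$-equivariant Euler class of $\b$ vanishes (one of the two weights is $0$), and the Gysin sequence gives $H^3([\b_\reg/B],\Q)\cong\Q(-2)\neq 0$, an odd-degree class of weight $4$; equivalently $\ext^3_{\calS_\reg}(\IC_{\calS_\reg},\IC_{\calS_\reg})\neq 0$.

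For a rank-one torus this does not yet contradict the Hom-vanishing you need (no pairs with $j-i\geq 2$ occur), but in rank at least two it does: for $j-i=2$ and $k=1$ the group $\mathrm{Hom}(A_j,A_i[-1])$ is a sum of copies of $H^3(\hat{\calS}_\reg)(2)$, which contains weight-zero, Frobenius-invariant classes of the above Tate type (already for $G=\SL_2\times\SL_2$, by K\"unneth). Hence condition (\ref{COND2}) fails, both over $k$ and over a finite subfield $k_o$, so Proposition \ref{W-prop} cannot be applied to the full regular-locus base as you propose: the weight argument of Proposition \ref{ladderunique}(3) and Theorem \ref{Maintheo1} genuinely does not carry over to $\hat{\calS}_\reg$. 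Once the base is known to be $W$-equivariant, the completion and descent of the whole diagram over the regular locus therefore requires a different justification (note that the paper's own write-up establishes only the equivariance of the base and is silent on this step), so as it stands this part of your proof does not go through.
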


\begin{proof}Following the strategy of \S \ref{main-descent}, we need to prove that the base $\Lambda_b(\calN_\reg)$  of the Postnikov diagram $\Lambda(\calN_\reg)$ is $W$-equivariant. The map $\hat{f}_\reg$ being an isomorphism, the vertices of $\Lambda_b(\calN_\reg)$ are constant sheaves (up to a shift) and so are clearly $W$-equivariant. To prove that the arrows are $W$-equivariant, we need to prove (analogously to Theorem \ref{maintheo2}) that the map

$$
X^*(T)\longrightarrow H^2(\hat{\calB}_\reg,\Q)(1)\longrightarrow\ext^2(\hat{f}_{\reg\, !}\Q,\hat{f}_{\reg\, !}\Q)(1)
$$
is $W$-equivariant. The first map being $W$-equivariant (see Proposition \ref{WChern}), we need to see that  the second map too. However, notice that $\hat{f}_\reg$ is an isomorphism and so the second map is an isomorphism

$$
H^2(\hat{\calB}_\reg,\Q)\longrightarrow H^2(\hat{\calS}_\reg,\Q)
$$
which is dual to the restriction morphism

$$
H^{-2}_c(\hat{\calS}_\reg,\Q)\longrightarrow H^{-2}_c(\hat{\calB}_\reg,\Q).
$$
By K\"unneth formula we are reduced to prove that the restriction map (which is an isomorphism)

$$
H_c^{-2}(\calS_\reg,\Q)\longrightarrow H^{-2}_c(\calB_\reg,\Q)
$$
is $W$-equivariant. Notice that $H_c^{-2}(\calS_\reg,\Q)$ and $H^{-2}_c(\calB_\reg,\Q)$ are both the hypercohomology of the proper pushforwards of $\Q$ along the maps $\pr_2:\calS_\reg\rightarrow\calG_\reg$ and $p_\reg:\calB_\reg\rightarrow\calG_\reg$ respectively and these pushforwards are the intermediate extensions the pusforwards over semisimple regular elements. The result follows from the fact that $W$ acts on $\calB_\rss$ and that the isomorphism $\calB_\rss\simeq \calS_\rss$ is $W$-equivariant.

\end{proof}

\subsection{"Un calcul triste"}\label{calcul}

In this section we give an example where the kernel $\calN$ is not $W$-equivariant.

\begin{nothing}Notice that if $\calN$ were $W$-equivariant, then by Remark \ref{W-Hom}, the diagram $\Lambda(\calN)$ would be also $W$-equivariant and by Proposition \ref{ladderunique}(2) so would be the arrows of the complex $\Lambda_b(\calN)$. \end{nothing}
\bigskip

For $w\in W$ put $\calB_w:=[{\rm Ad}(w)\b/wBw^{-1}]$. We then have a cartesian diagram

$$
\xymatrix{\calB\ar[d]_{(q',p)}\ar[rr]^w&&\calB_w\ar[d]^{(q'_w,p_w)}\\
\calS\ar[rr]^w&&\calS}
$$
from which we get a commutative diagram (base change)

$$
\xymatrix{H^2(\calB_w,\Q)\ar[d]_{w^*}\ar[rr]^-{a_w}&&\ext^2\left((q'_w,p_w)_!\Q,(q'_w,p_w)_!\Q\right)\ar[d]^{w^*}\\
H^2(\calB,\Q)\ar[rr]^-a&&\ext^2\left((q',p)_!\Q,(q',p)_!\Q\right)}
$$
where the horizontal arrows are given by the functors $(q'_w,p_w)_!$ and $(q,p)_!$ respectively. Identifying $H^2(\calB,\Q)$ and $H^2(\calB_w,\Q)$ with $IH^2(\calS,\Q)$ on one hand, and $(q',p)_!\Q$ and $(q'_w,p_w)_!\Q$ with $\IC_\calS$ on the other hand, we end up with a commutative diagram

$$
\xymatrix{IH^2(\calS,\Q)\ar[d]_-{w^*}\ar[rr]^-{a_w}&&\ext^2(\IC_\calS,\IC_\calS)\ar[d]^{w^*}\\
IH^2(\calS,\Q)\ar[rr]^-a&&\ext^2(\IC_\calS,\IC_\calS)}
$$
Similarly to \S \ref{main-descent}, we see that that the arrows of the diagram $\Lambda_b(\calN)$ are $W$-equivariant if and only if $a=a_w$ for all $w\in W$.

\bigskip

\begin{nothing} Assume that $G=\SL_2$ and put $a_-:=a_w$ for $w\neq 1$ and $a_+:=a$. We prove below that $a_+\neq a_-$  proving thus the non-equivariance of $\calN$.
\bigskip
\end{nothing}

Note that

\begin{align*}
&S:=\t\times_\car\g=\{(x,t)\in \sl_2\times\bbA^1\,|\, \det(x)=-t^2\}\\
&X_+=\{(x,t,D)\in\sl_2\times\bbA^1\times\mathbb{P}^1\,|\, (x,t)\in S, D\subset{\rm Ker}(x-t{\rm Id})\}
\end{align*}
and denote by $\rho_+:X_+\rightarrow S$ the map $(x,t,D)\mapsto (x,t)$. We consider also the analogous map $\rho_-:X_-\rightarrow S$ where we replace ${\rm Ker}(x-t{\rm Id})$ in the definition of $X_+$ by ${\rm Ker}(x+t{\rm Id})$. 

Note that if $B$ denotes the Borel subgroup of upper triangular matrices, then 

$$
X_+\simeq \{(x,t,gB)\in\sl_2\times\bbA^1\times \SL_2/B\,|\, g^{-1}xg\in(t,-t)+\u\}
$$
and $X_-$ corresponds to choosing the opposite Borel subgroup of lower triangular matrices.

We have compactifications

\begin{align*}
&\overline{S}:=\{(X;T;Z)\in\mathbb{P}^4\,|\, \det(X)=-T^2\}\\
&\overline{X}_+:=\{((X;T;Z),D)\in\overline{S}\times\mathbb{P}^1\,|\, D\subset{\rm Ker}(X-T{\rm Id})\}\\
&\overline{X}_-:=\{((X;T;Z),D)\in\overline{S}\times\mathbb{P}^1\,|\, D\subset{\rm Ker}(X+T{\rm Id})\}\
\end{align*}
where we identify $\sl_2$ with $\bbA^3$ in the obvious way, and the small resolutions $\overline{\rho}_+:\overline{X}_+\rightarrow\overline{S}$ and $\overline{\rho}_-:\overline{X}_-\rightarrow\overline{S}$.

Identifying $IH^2(\overline{S},\Q)$ with $H^2(\overline{X}_+,\Q)$ (resp. with $H^2(\overline{X}_-,\Q)$) and $\IC_{\overline{S}}$ with $\overline{\rho}_{+*}\Q$ (resp. $\overline{\rho}_{-\, *}\Q$) we get a map $$\overline{a}_+:IH^2(\overline{S},\Q)\rightarrow\ext^2\left(\IC_{\overline{S}},\IC_{\overline{S}}\right)$$(resp. we get a map  $\overline{a}_-:IH^2(\overline{S},\Q)\rightarrow\ext^2\left(\IC_{\overline{S}},\IC_{\overline{S}}\right)$).
\bigskip

If we compose $\overline{a}_+$ (resp. $\overline{a}_-$) with the natural map

$$
\ext^2\left(\IC_{\overline{S}},\IC_{\overline{S}}\right)\rightarrow{\rm Hom}\left(IH^2(\overline{S},\Q),IH^4(\overline{S},\Q)\right)
$$
we get the cup-product 

$$
IH^2(\overline{S},\Q)\otimes IH^2(\overline{S},\Q)\overset{\smile_+}{\longrightarrow} IH^4(\overline{S},\Q)
$$
induced by the one on $H^*(\overline{X}_+,\Q)$ (resp. we get the cup-product $\smile_-$ induced by the one on $H^*(\overline{X}_-,\Q)$).

We have the following result \cite{Verdier}.

\begin{proposition}[Verdier] The two cup-products 

$$
\xymatrix{IH^2(\overline{S},\Q)\otimes IH^2(\overline{S},\Q)\ar@<-3pt>[r]_-{\smile_-}\ar@<3pt>[r]^-{\smile_+}& IH^4(\overline{S},\Q)}
$$
are not the same.

\end{proposition}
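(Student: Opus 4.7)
The plan is to compute both cup products $\smile_+$ and $\smile_-$ directly via the small resolutions $\overline{X}_\pm$ and to exhibit a class in $IH^2(\overline{S},\Q)$ whose self-pairing differs under the two products.

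First I unwind the geometry. $\overline{S}=\{T^2=a^2+bc\}\subset\mathbb{P}^4$ is a quadric threefold with a single ordinary double point at $[0{:}0{:}0{:}0{:}1]$. The natural map $((X;T;Z),D)\mapsto D$ exhibits each $\overline{X}_\pm$ as a $\mathbb{P}^2$-bundle $\pi_\pm:\overline{X}_\pm\to\mathbb{P}^1$, and solving the defining equations shows that both bundles are abstractly isomorphic to $\mathbb{P}\bigl(\mathcal{O}_{\mathbb{P}^1}(-1)^{\oplus 2}\oplus\mathcal{O}_{\mathbb{P}^1}\bigr)$. Setting $h_\pm:=\pi_\pm^*[\mathrm{pt}]$ and $\xi_\pm:=\overline{\rho}_\pm^*(H_{\overline{S}})$, Leray--Hirsch gives $H^2(\overline{X}_\pm)=\Q h_\pm\oplus\Q\xi_\pm$ and $H^4(\overline{X}_\pm)=\Q\,h_\pm\xi_\pm\oplus\Q\,\xi_\pm^2$, with relations $h_\pm^2=0$ and $\xi_\pm^3=2\,h_\pm\xi_\pm^2$ (the coefficient $2$ being the degree of the quadric, extracted from $\int\xi_\pm^3=2$ and $\int h_\pm\xi_\pm^2=1$).

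Since $\overline{\rho}_\pm$ are small, $\overline{\rho}_{\pm\,*}\Q=\IC_{\overline{S}}$, yielding canonical isomorphisms $H^*(\overline{X}_\pm)\cong IH^*(\overline{S})$. The hyperplane class $H\in IH^2$ corresponds to $\xi_\pm$ in both pictures, while $h_\pm$ corresponds to the class $E_\pm\in IH^2$ of a plane in the ``$\pm$-family'' passing through the node. The crucial step is to compute the image of $E_-$ under $\overline{\rho}_+^*$. Taking the concrete plane $P_-^{0,1}=\{b=0,\,a=T\}$ from the $-$-family, one checks that on $P_-^{0,1}$ the kernel of $X-T\cdot\mathrm{Id}$ is spanned by $(2T,c)$, so the strict transform $\widetilde{P_-^{0,1}}\subset\overline{X}_+$ is a bona fide divisor; the intersection constraints $\int\widetilde{P_-^{0,1}}\cup h_+\xi_+=1$ (one transverse point in the intersection with a line inside a fiber of $\pi_+$) and $\int\widetilde{P_-^{0,1}}\cup\xi_+^2=1$ (one point in the intersection of the plane with a conic $\overline{S}\cap H_1\cap H_2$) then force $[\widetilde{P_-^{0,1}}]=\xi_+-h_+$ in $H^2(\overline{X}_+)$. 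Hence $\overline{\rho}_+^*(E_-)=\xi_+-h_+$, i.e.\ $H=E_++E_-$ in $IH^2(\overline{S})$.

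The conclusion is then immediate: on $\overline{X}_+$ one has $E_+\smile_+ E_+=h_+^2=0$, whereas on $\overline{X}_-$ the symmetric computation gives $\overline{\rho}_-^*(E_+)=\xi_--h_-$, so that
\[
E_+\smile_- E_+\;=\;(\xi_--h_-)^2\;=\;\xi_-^2-2\,h_-\xi_-\;\in\;H^4(\overline{X}_-),
\]
which is manifestly nonzero in the basis $\{h_-\xi_-,\,\xi_-^2\}$. Thus the two cup products assign distinct values in $IH^4(\overline{S})$ to the same input $E_+\otimes E_+$, which proves $\smile_+\neq\smile_-$. The main obstacle is the strict-transform computation expressing a plane from one ruling as a class on the resolution associated with the opposite ruling: the two small resolutions are related by a flop along their respective exceptional $\mathbb{P}^1$'s, and the discrepancy $-2\,h_-\xi_-$ between the two self-pairings is precisely the twist that this flop introduces into the intersection form---this is the heart of Verdier's ``calcul triste.''
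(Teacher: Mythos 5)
Your proof is correct and takes essentially the same route as the paper's outline of Verdier's argument: both compare the self-cup-product of the class of a plane through the node, which is a fiber class (square zero) on its own small resolution but corresponds to the strict-transform class $\xi_\mp-h_\mp$ (with nonzero square, the paper's ${\rm cl}(D_{2,\mp})$) on the opposite resolution. You merely make explicit the intersection-theoretic details ($\mathbb{P}^2$-bundle structure, Leray--Hirsch, $\int\xi^3=2$, $\int h\xi^2=1$) that the paper leaves as asserted calculations.
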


\begin{proof} For the convenience of the reader we outline the proof. 

Consider the following commutative diagram

$$
\xymatrix{&&\mathbb{P}^1\times\mathbb{P}^1\ar[dll]_{\pr_1}\ar[drr]^{\pr_2}&&\\
\mathbb{P}^1&&\overline{X}_+\times_{\overline{S}}\overline{X}_-\ar[drr]^{\pr_-}\ar[dll]_{\pr_+}\ar[u]^f&&\mathbb{P}^1\\
\overline{X}_+\ar[rrd]^{\overline{\rho}_+}\ar[u]^{f_+}&&&&\overline{X}_-\ar[lld]_{\overline{\rho}_-}\ar[u]_{f_-}\\
&&\overline{S}&&}
$$
Consider the divisors

\begin{align*}
&D_{1,+}:=\pr_+\left(f^{-1}(\{\infty\}\times\mathbb{P}^1)\right)=f_+^{-1}(\infty),\hspace{.5cm}D_{2,+}:=\pr_+\left(f^{-1}(\mathbb{P}^1\times\{\infty\})\right)\\
&D_{1,-}:=\pr_-\left(f^{-1}(\mathbb{P}^1\times\{\infty\})\right)=f_-^{-1}(\infty),\hspace{.5cm}D_{2,-}:=\pr_-\left(f^{-1}(\{\infty\}\times\mathbb{P}^1)\right)
\end{align*}
Under the natural identifications

$$
H^2(\overline{X}_+,\Q)\simeq IH^2(\overline{S},\Q)\simeq H^2(\overline{X}_-,\Q)
$$
The class ${\rm cl}(D_{1,+/-})\in H^2(\overline{X}_{+/-},\Q)$ corresponds to ${\rm cl}(D_{2,-/+})\in H^2(\overline{X}_{-/+},\Q)$ and so the proposition follows from the calculations

$$
{\rm cl}(D_{1,+/-})\smile_{+/-}{\rm cl}(D_{1,+/-})=0,\hspace{.5cm}{\rm cl}(D_{2,+/-})\smile_{+/-}{\rm cl}(D_{2,+/-})\neq 0.
$$

\end{proof}

As a consequence of the proposition we get that the two maps $\overline{a}_+$ and $\overline{a}_-$ are different. 
\bigskip

Let us now deduce from this that $a_+$ and $a_-$ are also different.

Put 

$$
K:=\underline{\rm Hom}(\IC_{\overline{S}},\IC_{\overline{S}}).
$$
Notice that 

$$
K|_S=\underline{\rm Hom}(\IC_S,\IC_S).
$$
Put

\begin{align*}
&\overline{\sigma}:=\overline{a}_+-\overline{a}_-:IH^2(\overline{S},\Q)\rightarrow \ext^2(\IC_{\overline{S}},\IC_{\overline{S}})=H^2(\overline{S},K)\\
&\sigma:=a_+-a_-:IH^2(S,\Q)\rightarrow \ext^2(\IC_S,\IC_S)=H^2(S,K|_S).
\end{align*}
We have the following commutative diagram

$$
\xymatrix{IH^2(\overline{S},\Q)\ar[r]^{\overline{\sigma}}\ar[d]&H^2(\overline{S},K)\ar[d]^{\varphi_S}\\
IH^2(S,\Q)\ar[r]^{\sigma}&H^2(S,K|_S)}
$$
where the vertical arrows are the restriction maps.

Let $\overline{\alpha}\in IH^2(\overline{S},\Q)$ be such that $\overline{\sigma}(\overline{\alpha})\neq 0$ and denote by $\alpha\in IH^2(S,\Q)$ the image of $\overline{\alpha}$.

\begin{proposition}We have

$$
\sigma(\alpha)\neq 0
$$
and so $a_+\neq a_-$.

\end{proposition}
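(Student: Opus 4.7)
The strategy is to combine commutativity of the displayed square --- which gives $\sigma(\alpha) = \varphi_S(\overline{\sigma}(\overline{\alpha}))$ --- with an explicit description of $\ker \varphi_S$ to show $\overline{\sigma}(\overline{\alpha}) \notin \ker \varphi_S$. The plan proceeds by first computing the kernel via a localization triangle at infinity, then identifying its generator as cup-product by the ample hyperplane class, and finally separating $\overline{\sigma}(\overline{\alpha})$ from this line by testing its action on $IH^2(\overline{S}, \Q)$ against two judicious classes.

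First I would compute $\ker \varphi_S$. Let $j: S \hookrightarrow \overline{S}$ and $i: Z \hookrightarrow \overline{S}$, where $Z = \overline{S} \cap \{Z = 0\} \subset \mathbb{P}^4$. A direct inspection of the partial derivatives of $\det(X) + T^2$ shows the unique singularity of $\overline{S}$ lies in the affine chart $S$ (at the origin), so $Z$ is a smooth quadric surface contained in the smooth locus of $\overline{S}$. Consequently $\IC_{\overline{S}}|_Z = \Q_Z$, $K|_Z = \Q_Z$, and Gysin gives $i^!K = \Q_Z[-2](-1)$, whence $H^2(Z, i^!K) = \Q(-1)$. Combined with the vanishing $\ext^1(\IC_S, \IC_S) = 0$ (the analogue over $S$ of Proposition \ref{Ext}, established by the same Steinberg-variety purity argument), the long exact sequence attached to $i_*i^!K \to K \to Rj_*(K|_S)$ yields
$$
0 \longrightarrow \Q(-1) \longrightarrow H^2(\overline{S}, K) \xrightarrow{\,\varphi_S\,} H^2(S, K|_S).
$$

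Next I would identify the generator of this kernel: via the $H^*(\overline{S}, \Q)$-module structure on $\ext^*(\IC_{\overline{S}}, \IC_{\overline{S}})$, the Gysin image of $1 \in H^0(Z, \Q)(-1)$ is cup-product by the hyperplane class $[Z] \in H^2(\overline{S}, \Q)(1)$. Under the canonical evaluation map
$$
\mu : \ext^2(\IC_{\overline{S}}, \IC_{\overline{S}}) \longrightarrow \hom\bigl(IH^2(\overline{S}, \Q), IH^4(\overline{S}, \Q)\bigr),
$$
the kernel of $\varphi_S$ therefore becomes the operator $\overline{\beta} \mapsto [Z] \smile \overline{\beta}$, which by hard Lefschetz for the ample class $[Z]$ is an isomorphism. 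Finally, under $\mu$, $\overline{a}_\pm(\overline{\alpha})$ is the operator $\overline{\beta} \mapsto \overline{\alpha} \smile_\pm \overline{\beta}$, so if $\overline{\sigma}(\overline{\alpha}) = c \cdot [Z]$ for some $c \in \Q$ then
$$
\overline{\alpha} \smile_+ \overline{\beta} - \overline{\alpha} \smile_- \overline{\beta} = c \cdot [Z] \smile \overline{\beta} \quad \text{for every } \overline{\beta} \in IH^2(\overline{S}, \Q).
$$
Taking $\overline{\beta} = \overline{\alpha} = [D_{1,+}] \leftrightarrow [D_{2,-}]$, Verdier's computation gives $0 - [D_{2,-}]^2 \neq 0$ on the left, so $c \neq 0$; taking instead $\overline{\beta} = [D_{2,+}] \leftrightarrow [D_{1,-}]$, the Weyl involution swapping the two small resolutions $\overline{X}_+ \leftrightarrow \overline{X}_-$ identifies $[D_{j,+}]$ with $[D_{j,-}]$ and hence the cross-intersection numbers $[D_{1,+}] \cdot [D_{2,+}]$ and $[D_{1,-}] \cdot [D_{2,-}]$ coincide, so the left side vanishes while $c \cdot [Z] \smile \overline{\beta} \neq 0$ by hard Lefschetz, contradicting $c \neq 0$.

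The main obstacle is the middle step --- the identification of the Gysin generator with cup-product by the ample class $[Z]$ through the module structure of $\ext^*(\IC_{\overline{S}}, \IC_{\overline{S}})$, combined with the invocation of hard Lefschetz for intersection cohomology of the singular projective quadric $\overline{S}$. Once this identification is in hand, the final separation reduces to the two explicit intersection-number computations on the small resolutions $\overline{X}_\pm$ that were already extracted in Verdier's proof.
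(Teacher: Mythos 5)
Your reduction $\sigma(\alpha)=\varphi_S(\overline{\sigma}(\overline{\alpha}))$ and your description of $\ker\varphi_S$ (granting the vanishing $\ext^1_S(\IC_S,\IC_S)=0$, which you assert rather than prove, but which does hold) follow a genuinely different route from the paper, which instead covers $\overline{S}$ by $S$ and $\overline{S}_\reg$, deduces from $H^1(S_\reg,\Q)=0$ that the pair of restrictions out of $H^2(\overline{S},K)$ is injective, and checks $\varphi_{\overline{S}_\reg}\circ\overline{\sigma}=0$. The problem is that your final separation step is false: the two cross terms do \emph{not} agree. Indeed, writing $x=\bigl(\begin{smallmatrix}a&b\\ c&-a\end{smallmatrix}\bigr)$ and taking $\infty=\langle e_1\rangle$, the images $\overline{\rho}_\pm(D_{1,\pm})$ and $\overline{\rho}_\pm(D_{2,\pm})$ are the two planes $\{C=0,\ A=\pm T\}$ of the quadric $\overline{S}$, whose union is the hyperplane section $\{C=0\}\cap\overline{S}$; hence $cl(D_{1,\pm})+cl(D_{2,\pm})=\overline{\rho}_\pm^*[H]$ and so $\overline{\alpha}+\overline{\beta}=[H]$ in $IH^2(\overline{S},\Q)$, where $[H]$ is the hyperplane class. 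Since cup product with a class pulled back from $H^2(\overline{S},\Q)$ is the same for both resolutions, $\overline{\alpha}\smile_\pm\overline{\beta}=[H]\smile\overline{\alpha}-\overline{\alpha}\smile_\pm\overline{\alpha}$, whence
$$
\overline{\alpha}\smile_+\overline{\beta}-\overline{\alpha}\smile_-\overline{\beta}\;=\;\overline{\alpha}\smile_-\overline{\alpha}-\overline{\alpha}\smile_+\overline{\alpha}\;=\;{\rm cl}(D_{2,-})\smile_-{\rm cl}(D_{2,-})\;\neq\;0,
$$
by the very computation of Verdier that you invoke to get $c\neq 0$. So your second test yields no contradiction, and the proof collapses at its crucial point.

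The source of the error is the ``Weyl involution'' argument: the isomorphism $\overline{X}_+\simeq\overline{X}_-$ exchanging the small resolutions lies over the nontrivial involution $w$ of $\overline{S}$ (namely $(X;T;Z)\mapsto(X;-T;Z)$), not over the identity, so it only gives $\overline{\alpha}\smile_-\overline{\beta}=w^*\bigl(\overline{\alpha}\smile_+\overline{\beta}\bigr)$; and $w^*$ is not the identity on the two-dimensional space $IH^4(\overline{S},\Q)$, since $w$ interchanges the two rulings of the quadric surface at infinity. Moreover, equality of intersection \emph{numbers} would not suffice in any case: the identity you need is an equality of classes in $IH^4(\overline{S},\Q)$. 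Concretely, $cl(D_{1,+})\smile cl(D_{2,+})$ is the class of a line in a fibre of $f_+$, $cl(D_{2,-})\smile cl(D_{1,-})$ is the class of a line in a fibre of $f_-$, and under the canonical identifications these pair with $\overline{\alpha}$ to $0$ and $1$ respectively, so they are distinct elements of $IH^4(\overline{S},\Q)$. To salvage your approach you would have to exhibit a test genuinely separating $\overline{\sigma}(\overline{\alpha})$ from the line $\Q\cdot[H]$ inside $\ext^2(\IC_{\overline{S}},\IC_{\overline{S}})$, or else argue as the paper does, where the needed injectivity is obtained from the open cover $\{S,\overline{S}_\reg\}$ rather than from the boundary divisor at infinity.
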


\begin{proof}We need to prove that 

\begin{equation}
\varphi_S\left(\overline{\sigma}(\overline{\alpha})\right)\neq 0.
\label{alpha}\end{equation}
We have the open covering
$$
\overline{S}=\overline{S}_\reg\cup S
$$
where $\overline{S}_\reg$ is the smooth open subset of elements $(X;T;Z)$ in $\overline{S}$ such that $X\neq 0$.
\bigskip

Notice that $S_\reg=\overline{S}_\reg\cap S$ and so we have the exact sequence (Mayer-Vietoris)

$$
\cdots\rightarrow H^1(S_\reg,K|_{S_\reg})\rightarrow H^2(\overline{S},K)\overset{\varphi}{\longrightarrow} H^2(\overline{S}_\reg,K|_{\overline{S}_\reg})\oplus H^2(S,K|_S)\rightarrow\cdots
$$
Since $S_\reg$ and $\overline{S}_\reg$ are smooth open subsets of $S$ and $\overline{S}$ respectively, we have

$$
K|_{S_\reg}=\underline{\rm Hom}_{S_\reg}(\Q,\Q)=\Q,\hspace{.5cm}\text{ and }\hspace{.5cm} K|_{\overline{S}_\reg}=\Q.
$$
Notice also that
$$
H^1(S_\reg,\Q)=0
$$
as $\rho|_{X_\reg}:X_\reg\simeq S_\reg$ and $H^1(X_\reg,\Q)\simeq H^1(X,\Q)=0$.
\bigskip

The map $\varphi$ is thus injective and so to prove (\ref{alpha}) we are reduced to prove that

$$
\varphi_{\overline{S}_\reg}(\overline{\sigma}(\overline{\alpha}))= 0
$$
where $\varphi_{\overline{S}_\reg}:H^2(\overline{S},K)\rightarrow H^2(\overline{S}_\reg,\Q)$ is the restriction map. 

As the restrictions of $\overline{\rho}_+$ and $\overline{\rho}_-$ over regular elements induce isomorphisms $\overline{X}_{+,\reg}\simeq \overline{S}_\reg$ and $\overline{X}_{-,\reg}\simeq\overline{S}_\reg$ we have the following commutative diagram
$$
\xymatrix{H^2(\overline{X}_+,\Q)\simeq IH^2(\overline{S},\Q)\ar[r]^-{\overline{a}_+}\ar[d]&H^2(\overline{S},K)\ar[d]^{\varphi_{\overline{S}_\reg}}\\
H^2(\overline{X}_{+,\reg},\Q)\simeq H^2(\overline{S}_\reg,\Q)\ar[r]^-{\rm Id}&H^2(\overline{S}_\reg,\Q)}
$$
and the analogous diagram with $\overline{X}_-$ instead of $\overline{X}_+$. From these two commutative diagrams we deduce that

$$
\varphi_{\overline{S}_\reg}\circ \overline{\sigma}=0.
$$

\end{proof}

\section{The functors $\I_\lambda$ and $\R_\lambda$}\label{I-R}

\subsection{Definition}

We consider the cohomological correspondence  where $\pr_i$ is the $i$-th projection
$$
\xymatrix{\ocalT_\lambda&&\ocalT_\lambda\times_{\car_\lambda}\calG_\lambda\ar[rr]^-{\pr_2}\ar[ll]_-{\pr_1}&&\calG_\lambda}.
$$
with kernel $\bN_\lambda$. 

We define the associated  restriction and induction functors
\begin{align*}
&\R_\lambda:\dcb(\calG_\lambda)\rightarrow\dcb(\ocalT_\lambda),\hspace{.5cm}K\mapsto\pr_{1!}\left(\pr_2^*(K)\otimes\bN_\lambda\right),\\
&\I_\lambda:\dcb(\ocalT_\lambda)\rightarrow\dcb(\calG_\lambda),\hspace{.5cm}K\mapsto \pr_{2*}\,\underline{\rm Hom}\left(\bN_\lambda,\pr_1^!(K)\right)
\end{align*}
as  in \S \ref{cor}. 

If we denote $\Ind_\lambda$ and $\Res_\lambda$ the induction and restriction functors associated to the correspondence 

$$
\xymatrix{\calT_\lambda&&\calT_\lambda\times_{\car_\lambda}\calG_\lambda\ar[rr]^-{\pr_2}\ar[ll]_-{\pr_1}&&\calG_\lambda}
$$
with kernel $\calN_\lambda=\calN|_{\calT_\lambda\times_\car\calG_\lambda}$, then by Remark \ref{rem33}

$$
\Ind_\lambda=\I_\lambda\circ\pi_{\lambda\, *},\hspace{1cm}\Res_\lambda=\pi_\lambda^*\circ\R_\lambda.
$$
Since  $q_\lambda^!=q_\lambda^*$ and $p_{\lambda\,!}=p_{\lambda\, *}$, we see that $\Ind_\lambda$ decomposes also as the composition of $\pi_{\lambda\, !}$ followed by the functor 

$$
\dcb(\ocalT_\lambda)\rightarrow\dcb(\calG_\lambda),\hspace{.5cm}K\mapsto \pr_{2!}\left(\pr_1^*(K)\otimes\bN_\lambda\right)
$$
and so by Remark \ref{Inv} we have

\begin{equation}
\I_\lambda(K)=\pr_{2!}\left(\pr_1^*(K)\otimes\bN_\lambda\right).
\label{form2}\end{equation}

\bigskip

As in \S \ref{perInd}, we define a pair of adjoint functors $({^p}\R_\lambda,{^p}\I_\lambda)$

$$
\xymatrix{\calM(\overline{\t}_\lambda)\ar@/_/[rr]_-{{^p}\I_\lambda}&&\calM(\calG_\lambda)\ar@/_/[ll]_-{{^p}\R_\lambda}}
$$
where $\overline{\t}_\lambda:=[\t_\lambda/W]$. We let $\overline{s}_\lambda:\ocalT_\lambda\rightarrow\overline{\t}_\lambda$ be the quotient map of  the identity map $\t_\lambda\rightarrow\t_\lambda$ by $N\rightarrow W$. 

\bigskip

\begin{lemma} We have

$$
{^p}\I_\lambda=\I_\lambda\circ \overline{s}_\lambda^![n](n),\hspace{.5cm}{^p}\R_\lambda={^p}\calH^0\circ \overline{s}_{\lambda\, !}\circ \R_\lambda [-n](-n).
$$

\label{compatible}\end{lemma}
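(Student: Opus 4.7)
The plan is to transport $\bN_\lambda$ along the smooth morphism $\overline{s}_\lambda\times 1:\ocalT_\lambda\times_{\car_\lambda}\calG_\lambda\to\overline{\t}_\lambda\times_{\car_\lambda}\calG_\lambda$ (whose fibre is $\B(T)$) and to identify its pushforward with the $\IC$-kernel that defines ${^p}\I_\lambda$ and ${^p}\R_\lambda$ in Proposition \ref{propdecomp}. Once this single identification is in place, each of the two identities of the lemma becomes a routine manipulation in the six-functor formalism.

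The heart of the argument is to establish
\[
(\overline{s}_\lambda\times 1)_!\,\bN_\lambda \;\simeq\; \IC_{\overline{\t}_\lambda\times_{\car_\lambda}\calG_\lambda}.
\]
I would prove this by descent along the $W$-torsor $\pi_\lambda\times 1:\calT_\lambda\times_{\car_\lambda}\calG_\lambda\to\ocalT_\lambda\times_{\car_\lambda}\calG_\lambda$. By Theorem \ref{maintheo-descent}, $(\pi_\lambda\times 1)^*\bN_\lambda = \calN_\lambda$; combining this with the factorisation $(s_\lambda\times 1)\circ(q_\lambda,p_\lambda) = (q'_\lambda,p_\lambda)$ and smallness of $(q'_\lambda,p_\lambda)$ from Theorem \ref{Steinpro}(ii), one obtains
\[
(s_\lambda\times 1)_!(\pi_\lambda\times 1)^*\bN_\lambda \;=\; (s_\lambda\times 1)_!\calN_\lambda \;=\; (q'_\lambda,p_\lambda)_!\Q \;=\; \IC_{\t_\lambda\times_{\car_\lambda}\calG_\lambda}.
\]
The $W$-equivariant structure of $\bN_\lambda$ induces a canonical $W$-equivariant structure on $(\overline{s}_\lambda\times 1)_!\bN_\lambda$ matching that of $\IC_{\overline{\t}_\lambda\times_{\car_\lambda}\calG_\lambda}$, and so Lemma \ref{descentlem} identifies them on the $W$-quotient.

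Granted this kernel identification, the identity for ${^p}\R_\lambda$ is the simpler one. Using $\overline{s}_\lambda\circ\pr_1 = \widetilde{\pr}_1\circ(\overline{s}_\lambda\times 1)$ and $\pr_2 = \widetilde{\pr}_2\circ(\overline{s}_\lambda\times 1)$ (where $\widetilde{\pr}_i$ are the projections from $\overline{\t}_\lambda\times_{\car_\lambda}\calG_\lambda$), together with the projection formula, one computes for $K\in\dcb(\calG_\lambda)$
\[
\overline{s}_{\lambda,!}\,\R_\lambda(K) \;=\; \widetilde{\pr}_{1,!}\bigl((\overline{s}_\lambda\times 1)_!\,\bN_\lambda\otimes\widetilde{\pr}_2^*K\bigr) \;=\; \widetilde{\pr}_{1,!}\bigl(\IC_{\overline{\t}_\lambda\times_{\car_\lambda}\calG_\lambda}\otimes\widetilde{\pr}_2^*K\bigr),
\]
so that ${^p}\calH^0\bigl(\overline{s}_{\lambda,!}\R_\lambda(K)[-n](-n)\bigr)$ reproduces the defining formula of ${^p}\R_\lambda$ from \S\ref{perInd}.

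For the $\I$-identity I would run the same strategy, replacing the projection formula with the internal-hom adjunction $f_*\,\underline{\rm Hom}(A,f^!B)=\underline{\rm Hom}(f_!A,B)$ applied with $f=\overline{s}_\lambda\times 1$, $A=\bN_\lambda$, $B=\widetilde{\pr}_1^!K[n](n)$. This converts $\I_\lambda(\overline{s}_\lambda^!K[n](n)) = \pr_{2,*}\underline{\rm Hom}(\bN_\lambda,\pr_1^!\overline{s}_\lambda^!K[n](n))$ into $\widetilde{\pr}_{2,*}\,\underline{\rm Hom}(\IC_{\overline{\t}_\lambda\times_{\car_\lambda}\calG_\lambda},\widetilde{\pr}_1^!K)[n](n)$, which by Proposition \ref{propdecomp} is exactly ${^p}\I_\lambda(K)$. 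The principal obstacle is Step~1: the quotient $a_\lambda:\t_\lambda\to\overline{\t}_\lambda$ fails to be étale on the non-open strata (the stabilisers are $\widehat{W}^L$), so the descent must be phrased via the canonical $W$-equivariant structures and Lemma \ref{descentlem} rather than through naive étale base change.
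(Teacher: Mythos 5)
Your argument hinges on the identification $(\overline{s}_\lambda\times 1)_!\,\bN_\lambda\simeq\IC_{\overline{\t}_\lambda\times_{\car_\lambda}\calG_\lambda}$, and this is where it breaks down. Theorem \ref{Steinpro}(ii) gives smallness of $(q',p)$ over all of $\car$; neither smallness nor the identity $(q',p)_!\Q=\IC_{\t\times_\car\calG}$ survives restriction to a locally closed stratum. By proper base change, $(q'_\lambda,p_\lambda)_!\Q$ is the restriction of $\IC_{\t\times_\car\calG}$ to $\t_\lambda\times_{\car_\lambda}\calG_\lambda$, which is not $\IC_{\t_\lambda\times_{\car_\lambda}\calG_\lambda}$: for $\lambda$ the class of $G$ itself (so $\t_\lambda=\{0\}$ and $\calG_\lambda=\calG_\nil$) the complex $(q'_\nil,p_\nil)_!\Q$ is the full Springer sheaf, with simple constituents indexed by the irreducible characters of $W$, whereas $\IC_{\calG_\nil}$ is a single intersection cohomology complex. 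For the same reason there is no stratum analogue of Proposition \ref{propdecomp}: the functors ${^p}\I_\lambda$ and ${^p}\R_\lambda$ are \emph{not} defined by an $\IC$-kernel on $\overline{\t}_\lambda\times_{\car_\lambda}\calG_\lambda$ (the relevant kernel would be exactly $(\overline{s}_\lambda\times 1)_!\bN_\lambda$, which is not an $\IC$); they are defined, as in \S\ref{perInd}, by factoring ${^p}\underline{\Ind}_\lambda:=\Ind_\lambda\circ s_\lambda^![n](n)$ through the $W$-torsor $\pi_{\t_\lambda}$. So both your key step and the target formulas you match against fail off the open stratum. (Your closing worry is also moot: $\overline{\t}_\lambda=[\t_\lambda/W]$ is the stack quotient, so $\pi_{\t_\lambda}:\t_\lambda\rightarrow\overline{\t}_\lambda$ is a genuine $W$-torsor on every stratum.)

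The intended proof is much softer and requires no computation of a pushforward of the kernel. Since $\overline{s}_\lambda^![n](n):\calM(\overline{\t}_\lambda)\rightarrow\calM(\ocalT_\lambda)$ and ${^p}\calH^0\circ\overline{s}_{\lambda\,!}[-n](-n)$ are mutually inverse equivalences, the second identity follows from the first by adjunction. For the first, Remark \ref{Inv} shows that ${^p}\I_\lambda$ is completely determined by the relation ${^p}\underline{\Ind}_\lambda={^p}\I_\lambda\circ\pi_{\t_\lambda\,!}$, so it suffices to check that $\I_\lambda\circ\overline{s}_\lambda^![n](n)$ satisfies the same relation. Using $\Ind_\lambda=\I_\lambda\circ\pi_{\lambda\,!}$ (Remark \ref{rem33}), this reduces to the base-change identity $\overline{s}_\lambda^![n](n)\circ\pi_{\t_\lambda\,!}=\pi_{\lambda\,!}\circ s_\lambda^![n](n)$ in the cartesian square formed by $s_\lambda$, $\pi_\lambda$, $\overline{s}_\lambda$, $\pi_{\t_\lambda}$, where $s_\lambda$ and $\overline{s}_\lambda$ are smooth of the same relative dimension. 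Your projection-formula and internal-hom manipulations would be harmless if carried out with the kernel $(\overline{s}_\lambda\times 1)_!\bN_\lambda$ itself rather than with an $\IC$, but then they yield nothing beyond what this base-change argument already gives.
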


\begin{proof} Since the functors $\overline{s}_\lambda^![n](n)$ and ${^p}\calH^0\circ \overline{s}_{\lambda\, !}[-n](-n)$ between $\calM(\overline{\t}_\lambda)$ and $\calM(\ocalT_\lambda)$ are inverse to each other, by adjunction we deduce the second equality from the first one. 

Analogously to the definition ${^p}\underline{\Ind}$ (see \S \ref{perInd}) we define ${^p}\underline{\Ind}_\lambda$ by

$$
{^p}\underline{\Ind}_\lambda:=\Ind_\lambda\circ s_\lambda^![n](n)={^p}\I_\lambda\circ \pi_{\t_\lambda\, !}.
$$
where 

$$
\xymatrix{\calT_\lambda\ar[rr]^{s_\lambda}\ar[d]_{\pi_\lambda}&&\t_\lambda\ar[d]^{\pi_{\t_\lambda}}\\
\ocalT_\lambda\ar[rr]^{\overline{s}_\lambda}&&\overline{\t}_\lambda}
$$

By Remark \ref{Inv}, to prove the first equality, we need  to see that

$$
{^p}\underline{\Ind}_\lambda=\I_\lambda\circ \overline{\pr}_{\t_\lambda}^![n](n)\circ  \pi_{\t_\lambda\, !}
$$
Using that $\Ind_\lambda=\I_\lambda\circ\pi_{\lambda\, !}$, we are reduced to prove that

$$
\overline{s}_\lambda^![n](n)\circ  \pi_{\t_\lambda\, !}=\pi_{\lambda\, !}\circ s_\lambda^![n](n).
$$
But this follows from the base change theorem as $\overline{s}_\lambda$ and $s_\lambda$ are smooth of same relative dimension.

\end{proof}

\subsection{Semisimple regular elements}\label{semisimplereg}

Let us see that $\I_\lambda$ and $\R_\lambda$ are the identity functors when $\lambda$ represents semisimple regular elements (i.e. when $\lambda$ is the conjugacy class of maximal tori) in which case we use the subscript $\rss$ instead of $\lambda$ in the notation.

Recall that
$$
U\times\t_{\reg}\longrightarrow\b_{\rss},\hspace{.5cm}(u,t)\mapsto \mathrm{Ad}(u)(t)
$$
is an isomorphism and so taking the quotient by $B$ on both sides we end up with an isomorphism
$$
\calT_{\reg}\overset{\sim}{\longrightarrow}\calB_{\rss}.
$$
Moreover the natural injection $\t_{\reg}\rightarrow\g_{\rss}$ induces an isomorphism
$$
\ocalT_{\!\reg}\simeq\calG_{\rss}.
$$
Therefore, the morphism $(q_\rss,p_\rss)$  is the diagonal morphism
$$
\calT_{\reg}\longrightarrow\calT_{\reg}\times_\car\calG_{\rss}=\calT_{\reg}\times_\car\ocalT_{\!\reg}.
$$
We thus have the following cartesian diagram
$$
\xymatrix{\calT_{\reg}\ar[d]_{(q_\rss,p_\rss)}\ar[r]^{\pi_\reg}&\ocalT_{\!\reg}\ar[d]^{\Delta_{\ocalT_{\!\reg}}}\\
\calT_{\reg}\times_\car\ocalT_{\!\reg}\ar[r]^{\pi_{\reg}\times 1}&\ocalT_{\!\reg}\times_\car\ocalT_{\!\reg}}
$$
where the horizontal arrows are the quotient maps by $W$ and the vertical ones are the diagonal morphisms (which are $T$-torsors). Therefore
$$
\calN_\rss=(q_\rss,p_\rss)_!\Q\simeq (\pi_\reg\times 1)^*\Delta_{\ocalT_{\!\reg}!}\Q
$$
i.e. $\bN_{\rss}=\Delta_{\ocalT_{\!\reg}!}\Q$. The functors $\I_{\rss}:\dcb(\ocalT_{\!\reg})\rightarrow\dcb(\calG_{\rss})\simeq\dcb(\ocalT_{\!\reg})$ and $\R_{\rss}:\dcb(\ocalT_{\!\reg})\simeq\dcb(\calG_{\rss})\rightarrow\dcb(\ocalT_{\!\reg})$  are thus the identity functors. 

\subsection{The isomorphism $\R_\lambda\circ\I_\lambda\simeq 1$}\label{counit-section}

By (\ref{form2}) we can regard $\I_\lambda$ as a restriction functor. We thus consider the composition of correspondences

$$
\begin{footnotesize}
\xymatrix{&&\ocalT_\lambda\times_{\car_\lambda}\ocalT_\lambda\ar@/_3pc/[dddll]_b\ar@/^3pc/[dddrr]^a&&\\
&&\ocalT_\lambda\times_{\car_\lambda}\calG_\lambda\times_{\car_\lambda}\ocalT_\lambda\ar[u]_{\pr_{\ocalT_\lambda,\ocalT_\lambda}}\ar[rd]^{\pr_{23}}\ar[ld]_{\pr_{12}}&&\\
&\ocalT_\lambda\times_{\car_\lambda}\calG_\lambda\ar[rd]\ar[ld]&&\calG_\lambda\times_{\car_\lambda}\ocalT_\lambda\ar[rd]\ar[ld]&\\
\ocalT_\lambda&&\calG_\lambda&&\ocalT_\lambda}
\end{footnotesize}
$$
where the arrows are the obvious projections.
\bigskip

We put
$$
\bN_\lambda\circ_\calG\bN_\lambda:=\pr_{\ocalT_\lambda,\ocalT_\lambda !}(\pr_{12}^*\bN_\lambda\otimes\pr_{23}^*\bN_\lambda)=\pr_{\ocalT_\lambda,\ocalT_\lambda!}(\bN_\lambda\boxtimes_{\calG_\lambda}\bN_\lambda).
$$

Then it follows from \S\ref{cor} that the functor $\R_\lambda\circ\I_\lambda$ is the restriction functor of the cohomological correspondence
$$
(\ocalT_\lambda\times_{\car_\lambda}\ocalT_\lambda,\bN_\lambda\circ_\calG\bN_\lambda,a,b).
$$
\bigskip

\begin{theorem}
There is a natural isomorphism
$$
\bN_\lambda\circ_\calG\bN_\lambda\overset{\simeq}{\longrightarrow}\Delta_{\ocalT_\lambda!}\Q,
$$
where
$$
\Delta_{\ocalT_\lambda}:\ocalT_\lambda\rightarrow\ocalT_\lambda\times_{\car_\lambda}\ocalT_\lambda
$$
is the diagonal morphism. In particular we have $\R_\lambda\circ\I_\lambda\simeq 1$.
\label{maintheo1}\end{theorem}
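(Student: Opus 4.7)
The plan is to reduce the isomorphism on $\ocalT_\lambda \times_{\car_\lambda} \ocalT_\lambda$ to an analogous statement upstairs, compute the composition via a Steinberg-type stack, and then descend. Applying Lemma \ref{descentlem} componentwise to the $\widehat{W}^L$-torsor $[z(\frak{l})^o/T] \to \ocalT_\lambda$ used in the definition of $\bN_\lambda$ (see the diagram (\ref{diag-nil})), it suffices to construct a $\widehat{W}^L \times \widehat{W}^L$-equivariant isomorphism between the two pullbacks. By proper base change, the pullback of $\bN_\lambda \circ_\calG \bN_\lambda$ identifies with $\calN^L \circ_{\calG_\lambda} \calN^L$, while the pullback of $\Delta_{\ocalT_\lambda !}\Q$ identifies with $\iota_!\Q$, where $\iota$ is the canonical inclusion of the union of the $\widehat{W}^L$-twisted diagonals.

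A second application of proper base change identifies $\calN^L \circ_{\calG_\lambda} \calN^L$ with $(q_1, q_2)_! \Q$, where $q_1, q_2$ are the two natural projections from the Steinberg-type stack $[(z(\frak{l})^o + \u^L)/B^L] \times_{\calG_\lambda} [(z(\frak{l})^o + \u^L)/B^L]$ to $[z(\frak{l})^o/T]$. I would then compute this pushforward using the Bruhat-type stratification of \S\ref{SS} (applied to the Levi $L$): it expresses $(q_1, q_2)_! \Q$ as an iterated extension whose strata map through the $w$-twisted diagonals (one for each $w \in W^L$) and are affine bundles with pure cohomology vanishing in odd degrees (Proposition \ref{purity}). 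As in the proof of Theorem \ref{diag-res}, the corresponding spectral sequence degenerates at $E_1$.

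The Postnikov-diagram machinery of \S\ref{descent} (Propositions \ref{laddercompletion} and \ref{W-prop}) together with the $\ext$-vanishing of Proposition \ref{Ext} then lifts the cohomology-level identification uniquely to a canonical derived-category isomorphism $(q_1, q_2)_!\Q \simeq \iota_!\Q$. Naturality of the construction ensures this isomorphism is $\widehat{W}^L \times \widehat{W}^L$-equivariant, hence descends to the desired isomorphism $\bN_\lambda \circ_\calG \bN_\lambda \simeq \Delta_{\ocalT_\lambda !}\Q$. The conclusion $\R_\lambda \circ \I_\lambda \simeq 1$ then follows formally from Lemma \ref{comp}, since $\Delta_{\ocalT_\lambda !}\Q$ is the kernel of the identity functor on $\dcb(\ocalT_\lambda)$.

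The hardest part will be tracking equivariance for both factors simultaneously: because the subgroup $W_L = N_G(L)/L$ of $\widehat{W}^L$ permutes the strata of the Steinberg-type stack nontrivially while fixing the relevant diagonals, verifying that the piecewise identifications assemble into a genuinely $\widehat{W}^L \times \widehat{W}^L$-equivariant isomorphism requires a careful simultaneous application of the uniqueness statements of Lemma \ref{DIAGLEM2} and Proposition \ref{W-prop} to both factors of the convolution.
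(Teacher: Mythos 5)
Your reduction to the Levi/nilpotent situation and the final formal step (an isomorphism of kernels plus Lemma \ref{comp} gives $\R_\lambda\circ\I_\lambda\simeq 1$) match the paper, but the heart of your argument has a genuine gap: you never construct any morphism between $(q_1,q_2)_!\Q$ and $\iota_!\Q$, and the abstract-uniqueness route you propose in its place is not supported by what you cite. Proposition \ref{Ext} computes $\mathrm{Ext}^\bullet_{\hat{\calS}}(\IC_{\hat{\calS}},\IC_{\hat{\calS}})$, whereas the vanishing conditions (\ref{COND11}) and (\ref{COND21}) needed for Proposition \ref{laddercompletion}/Proposition \ref{W-prop} would here have to be checked for the actual vertices in play, namely (Tate twists and shifts of) direct sums of pushforwards along the $w$-twisted diagonals as in (\ref{PE}) --- a different computation that you do not carry out. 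More seriously, an identification ``at the level of cohomology sheaves'' is not a licence to invoke the Postnikov machinery: uniqueness of the completion only applies once the \emph{base complexes}, including their differentials, have been identified, and for both $(q_1,q_2)_!\Q$ and $\bigoplus_w\Delta_{w\,!}\Q$ these differentials are Chern-class-type maps; matching them (equivariantly) is a statement of the same nature as Theorem \ref{maintheo2} and is nowhere addressed. Finally, even the cohomology-level identification is not established by your spectral-sequence step: degeneration at $E_1$ of the Bruhat filtration only exhibits $\calH^i(q_1,q_2)_!\Q$ as an \emph{iterated extension} of the strata contributions, not as their direct sum, so the comparison with $\iota_!\Q$ is exactly what remains to be proved.

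The paper proceeds quite differently and fills precisely these holes. First it constructs a canonical map: the closed embedding $\Delta_{\calB_\nil}\colon\calB_\nil\to\calZ_\nil$ gives $\Q\to\Delta_{\calB_\nil\,!}\Q$ by adjunction, and applying $(q_\nil,q_\nil)_!$ together with the isomorphism $q_{\nil\,!}\Q\simeq\Q$ (fibres $[\u/U]$ are cohomologically trivial) yields the morphism (\ref{Mor1}); composing with $\pi_{\nil\,!}\pi_\nil^*\to 1$ gives the co-unit (\ref{co-unit}) on the descended side, whose pullback along $\pi_\nil\times\pi_\nil$ is (\ref{Mor2}). Second, to prove this map is an isomorphism it shows (Proposition \ref{IE1}, resting on Proposition \ref{IE} and the purity/weight degeneration) that the cohomology sheaves of $(q,q)_!\Q$ are, up to shift, perverse and are intermediate extensions of their restrictions to the regular locus --- this is what splits the iterated extension and produces the $W\times W$-equivariant structure --- and then checks the map is an isomorphism over regular elements, where both sides are visibly the twisted diagonals. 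If you want to salvage your plan, you must either reproduce this construction of the map and the intermediate-extension argument, or else genuinely verify the Hom-vanishing and the identification of the Postnikov bases (with differentials) for the twisted-diagonal objects; as written, the key step does not go through.
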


As we did in \S \ref{main-descent}, we explain the proof in the nilpotent case which is the essential case. We thus consider the first diagram of \S \ref{main-descent} and we put

$$
\calN_\nil:=(q_\nil,p_\nil)_!\Q \in\dcb(\B(T)\times\calG_\nil)
$$

We have
$$
(\bN_\nil\circ_\calG\bN_\nil)|_{\B(T)\times\B(T)}=\pr_{\B(T),\B(T)\,!}(\calN_\nil\boxtimes_{\calG_\nil}\calN_\nil)
$$
where
$$
\pr_{\B(T),\B(T)}:\hat{\calS}_\nil\times_\calG\hat{\calS}_\nil=\B(T)\times\calG_\nil\times\B(T)\longrightarrow\B(T)\times\B(T)
$$
is the obvious projection.

Consider the decomposition of $(q_\nil,q_\nil)$ as
$$
\xymatrix{\calZ_\nil=\calB_\nil\times_{\calG_\nil}\calB_\nil\ar[r]^-{\delta_\nil\times\delta_\nil}&\hat{\calB}_\nil\times_{\calG_\nil}\hat{\calB}_\nil\ar[r]^-{\hat{f}_\nil\times\hat{f}_\nil}&\hat{\calS}_\nil\times_{\calG_\nil}\hat{\calS}_\nil\ar[r]^-{\pr_{\B(T),\B(T)}}&\B(T)\times\B(T)}
$$
By K\"unneth formula we have
\begin{equation}
(\bN_\nil\circ_\calG\bN_\nil)|_{\B(T)\times\B(T)}=(q_\nil,q_\nil)_!\Q.
\label{qq}\end{equation}
\bigskip

\noindent We consider the following commutative diagram
\begin{equation}
\xymatrix{\calB_\nil\ar[rr]^{\Delta_{\calB_\nil}}\ar[d]_{q_\nil}&&\calZ_\nil\ar[d]^{(q_\nil,q_\nil)}\\
\B(T)\ar[rr]^-{\Delta_{\B(T)}}&&\B(T)\times\B(T)}
\label{diagW}\end{equation}
with diagonal morphisms $\Delta_{\calB_\nil}$ and $\Delta_{\B(T)}$.

The morphism $\Delta_{\calB_\nil}:\calB_\nil\rightarrow\calZ_\nil$ is schematic closed and so by adjunction we have a morphism
$$
\Q\rightarrow\Delta_{\calB_\nil*}\Q=\Delta_{\calB_\nil!}\Q,
$$
and so by applying the functor $(q_\nil,q_\nil)_!$ we get a morphism
\begin{equation}
\xymatrix{(q_\nil,q_\nil)_!\Q\ar[r]&\Delta_{\B(T) !}q_{\nil\,!}\Q.}
\label{eq11}\end{equation}
On the other hand the morphism $q_\nil$ being smooth with fibers isomorphic to $[\u/U]$, we have by adjunction a morphism
\begin{equation}
\xymatrix{q_{\nil\,!}\Q=q_{\nil\,!}(q_\nil)^!\Q\ar[r]&\Q}
\label{q!}\end{equation}
which is an isomorphism and so we obtain an isomorphism
\begin{equation}
\xymatrix{\Delta_{\B(T) !}q_{\nil\, !}\Q\ar[r]&\Delta_{\B(T) !}\Q.}
\label{eq12}\end{equation}
Composing (\ref{eq11}) with (\ref{eq12}), we end up with a morphism
\begin{equation}
\xymatrix{(q_\nil,q_\nil)_!\Q\ar[r]&\Delta_{\B(T) !}\Q.}
\label{Mor1}\end{equation}

From (\ref{qq}) together with (\ref{Mor1}) we get a morphism
$$
\bN_\nil\circ_\calG\bN_\nil\longrightarrow(\pi_\nil\times\pi_\nil)_!\Delta_{\B(T) !}\Q=\Delta_{\B(N) !}\pi_{\nil\,!}\Q
$$
where $\pi_\nil$ is the canonical map $\B(T)\rightarrow\B(N)$.
\bigskip

Composing with the adjunction morphism
$$
\pi_{\nil\,!}(\pi_\nil)^*=\pi_{\nil\,!}(\pi_\nil)^!\longrightarrow 1
$$
we find
\begin{equation}
\xymatrix{\bN_\nil\circ_\calG\bN_\nil\ar[r]&\Delta_{\B(N) !}\Q}
\label{co-unit}\end{equation}
\bigskip

Consider the cartesian diagram
$$
\xymatrix{ W\times\B(T)\ar[rr]^f\ar[d]^{\pi_\nil}&&\B(T)\times\B(T)\ar[d]^{\pi_\nil\times\pi_\nil}\\
\B(N)\ar[rr]^-{\Delta_{\B(N)}}&&\B(N)\times\B(N)}
$$
where the top arrow is given by $(w,t)\mapsto (w(t),t)$ and the bottom arrow is the diagonal morphism.

If we denote by $\Delta_{\B(T),w}:\B(T)\rightarrow\B(T)\times\B(T)$ the $w$-twisted diagonal morphism induced by $t\mapsto (w(t),t)$, then 	
$$
f_!\Q=\bigoplus_{w\in W}\Delta_{\B(T),w\, !}\Q=(\pi_\nil\times\pi_\nil)^*\Delta_{\B(N)!}\Q
$$
and so the pullback of (\ref{co-unit}) along the map $\pi_\nil\times\pi_\nil$ provides a morphism
\begin{equation}
(q_\nil,q_\nil)_!\Q\rightarrow\bigoplus_{w\in W}\Delta_{\B(T),w\,!}\Q.
\label{Mor2}\end{equation}
of $W\times W$-equivariant  complexes.

Theorem \ref{maintheo1} is thus a consequence of the following result.

\begin{theorem}
The morphism (\ref{Mor2}), and so the morphism (\ref{co-unit}), is an isomorphism.
\label{theoMor2}\end{theorem}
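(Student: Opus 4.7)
I would prove Theorem \ref{theoMor2} by combining the Bruhat stratification of $\calZ_\nil$ with a computation of each graded piece of the induced filtration on $(q_\nil, q_\nil)_!\Q$, then identifying the morphism (\ref{Mor2}) on these pieces.

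First, I would use the stratification $\calZ_\nil = \coprod_{w \in W} \calZ_{w, \nil}$ coming from Proposition \ref{SS1}, with $\calZ_{w, \nil} = [\u_w/B_w]$. The restriction of $(q_\nil, q_\nil)$ to this stratum factors through the $w$-twisted diagonal: it is the composition $[\u_w/B_w] \to \B(T) \xrightarrow{\Delta_{\B(T), w}} \B(T) \times \B(T)$, where the first arrow is induced by the group homomorphism $B_w = TU_w \to T$ and has fiber $[\u_w/U_w]$. Choosing a total order on $W$ refining the Bruhat order yields a decreasing filtration of $(q_\nil, q_\nil)_!\Q$ whose associated graded is $\bigoplus_w ((q_\nil, q_\nil)|_{\calZ_{w,\nil}})_!\Q$, and I would verify that (\ref{Mor2}) carries this filtration into the obvious one on $\bigoplus_w \Delta_{\B(T), w, !}\Q$.

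Next I would identify each graded piece. The fiber $[\u_w/U_w]$ is a vector bundle over $\B(U_w)$ of rank $\dim U_w$. Since $U_w$ is unipotent in characteristic prime to $\ell$, one has $H^*(\B(U_w), \Q) = \Q$ concentrated in degree zero; combined with Poincaré-Verdier duality on the zero-dimensional stack $[\u_w/U_w]$, this yields $R\Gamma_c([\u_w/U_w], \Q) = \Q$. By proper base change along $\Delta_{\B(T), w}$, the stratum $\calZ_{w, \nil}$ therefore contributes exactly $\Delta_{\B(T), w, !}\Q$ to the associated graded, matching the $w$-piece of the target of (\ref{Mor2}).

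Finally, I would verify that (\ref{Mor2}) induces the identity on each graded piece. On the closed diagonal stratum $\calZ_{1, \nil} \simeq \calB_\nil$, the construction unwinds directly: the adjunction unit $\Q \to \Delta_{\calB_\nil, !}\Q$ restricts to the identity on its image via $\Delta_{\calB_\nil}^*\Delta_{\calB_\nil, *} = \id$, and the canonical isomorphism $q_{\nil, !}\Q \simeq \Q$ (since $q_\nil$ is smooth of relative dimension zero with $\Q$-acyclic fibers $[\u/U]$) completes the identification on the $w=1$ piece. For general $w$, I would propagate the identification using the $W \times W$-Springer equivariance of both sides of (\ref{Mor2}), noting that $W \times W$ acts transitively on the set of graded pieces via $(w_1, w_2) \cdot w = w_1 w w_2^{-1}$. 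The main obstacle will be making this equivariance precise at the level of the filtered complex; this will likely require descending everything to a finite subfield $k_o \subset k$ and invoking a weight/purity argument in the spirit of Proposition \ref{ladderunique} combined with the exact sequence (\ref{BBD}).
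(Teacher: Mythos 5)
Your computation of the strata and their contributions is sound and agrees with the paper's formula (\ref{Mor3}): each $\calZ_{w,\nil}=[\u_w/B_w]$ maps through the $w$-twisted diagonal and, since $R\Gamma_c([\u_w/U_w],\Q)=\Q$, contributes exactly $\Delta_{\B(T),w\,!}\Q$ to the associated graded. The genuine gap is in the last step, where you identify the map (\ref{Mor2}) on the graded pieces for $w\neq 1$ by ``propagating'' the $w=1$ case through the $W\times W$-equivariance, claiming $W\times W$ acts transitively on the graded pieces by $(w_1,w_2)\cdot w=w_1ww_2^{-1}$. The Springer action of $W\times W$ on $(q_\nil,q_\nil)_!\Q$ is \emph{not} induced by automorphisms of $\calZ_\nil$ (it is constructed via intermediate extension from the regular semisimple locus, resp.\ via the equivariance of $\IC_{\t\times_\car\calG}$), so it neither preserves the Bruhat filtration nor permutes its graded pieces in the stated way; the interaction of the Springer action with the Bruhat stratification of the Steinberg stack is upper-triangular at best, not a permutation. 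A related difficulty is that over $\B(T)\times\B(T)$ all the objects $\Delta_{\B(T),w\,!}\Q$ have constant cohomology sheaves, so the $w$-components cannot be separated by supports, the filtration has no canonical splitting, and even the asserted compatibility of (\ref{Mor2}) with the two filtrations requires knowing the $w\neq 1$ components of (\ref{Mor2}), which are defined by descent and are exactly what you are trying to compute. So the argument, as it stands, is circular at the crucial point, and weights alone over the nilpotent fibre will not rescue it.

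The paper's way around this is to globalize rather than to work over the nilpotent point: it suffices to prove that $\calH^i(q,q)_!\Q\rightarrow\calH^i\bigl(\bigoplus_{w\in W}\Delta_{\calT,w\,!}\Q\bigr)$ is an isomorphism over $\calT\times_\car\calT$, since the nilpotent statement follows by base change to the fibre over $0$. Over the global base the twisted diagonals have distinct generic supports, and the weight argument (Lemma \ref{specseq}) shows the stratification spectral sequence degenerates at $E_1$, so that each $\calH^i(q,q)_!\Q$ is a successive extension of the $\Delta_{\calT,w\,!}$-pieces; Proposition \ref{IE} then gives that it is a shifted perverse sheaf which is the \emph{intermediate extension} of its restriction to the regular locus (Proposition \ref{IE1}), and in particular carries a $W\times W$-equivariant structure. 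The map to $\bigoplus_w\calH^i\Delta_{\calT,w\,!}\Q$ is produced from this equivariance, and since source and target are both intermediate extensions from the regular locus, it is an isomorphism because it visibly is one over regular elements, where $\calZ$ really is the disjoint union of the twisted diagonals. If you want to keep a purely nilpotent, stratum-by-stratum proof, you would instead have to compute the Springer $W\times W$-action on the associated graded of the Bruhat filtration explicitly, which is a substantially harder statement than the theorem itself.
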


It is enough to show that the morphism
$$
\calH^i(q_\nil,q_\nil)_!\Q\rightarrow \calH^i\left(\bigoplus_{w\in W}\Delta_{\B(T),w\,!}\Q\right).
$$
induced by the morphism (\ref{Mor2}) is an isomorphism for all $i$. 
\bigskip

To prove this we  prove that we have an isomorphism

$$
\calH^i(q,q)_!\Q\rightarrow \calH^i\left(\bigoplus_{w\in W}\Delta_{\calT,w\,!}\Q\right)
$$
where $\Delta_{\calT,w}:\calT\rightarrow\calT\times_\car\calT$ is induced by $t\mapsto (w(t),t)$.

\bigskip

\begin{proposition}
The sheaf $\calH^i(q,q)_!\Q$ is  (up to a shift by $-n$) perverse  and is the intermediate extension of its restriction to the open substack of regular elements of $\calT\times_\car\calT$. In particular, it is  $W\times W$-equivariant.\label{IE1}\end{proposition}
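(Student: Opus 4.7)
I would follow the same template as the proof of Theorem~\ref{diag-res}. Choose a total ordering on $W$ giving a decreasing filtration by closed substacks
$$\calZ=\calZ_0\supset\calZ_1\supset\cdots\supset\calZ_{|W|}=\emptyset$$
with $\calZ_i\setminus\calZ_{i+1}=\calZ_{w_i}$, and form the associated spectral sequence
$$E_1^{ij}=\calH^{i+j}\bigl(((q,q)|_{\calZ_{w_i}})_!\Q\bigr)\Longrightarrow\calH^{i+j}((q,q)_!\Q).$$

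For each $w\in W$, the restriction $(q,q)|_{\calZ_w}$ factors through a twisted closed diagonal $\Delta_{\calT,w}:\calT\to\calT\times_\car\calT$, $[t]\mapsto([w(t)],[t])$, via a map $q_w:\calZ_w\to\calT$ induced by the $B_w$-equivariant projection $\b_w=\t\oplus\u_w\to\t$ and by $B_w=TU_w\to T$. An argument parallel to the proof of Proposition~\ref{purity} shows that $(q_w)_!\Q$ is a pure constant complex on $\calT$ concentrated in even degrees: the $\u_w$-part contributes only even pure cohomology via the unipotent quotient $[\u_w/U_w]$, and the $\B(T)$-factor of $\calT=\t\times\B(T)$ contributes only even pure cohomology. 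Consequently $E_1^{ij}$ vanishes when $i+j$ is odd, and the spectral sequence degenerates at $E_1$.

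Hence $\calH^{\mathrm{odd}}(q,q)_!\Q=0$, and each $\calH^{2k}(q,q)_!\Q$ is a successive extension of pieces of the form $\Delta_{\calT,w,!}\Q$ tensored with pure even-degree Tate-twisted modules. The regular open substack satisfies
$$(\calT\times_\car\calT)_\reg=\coprod_{w\in W}\Delta_{\calT,w}(\calT_\reg),$$
reflecting the isomorphism $\t_\reg\times_\car\t_\reg\simeq W\times\t_\reg$. Therefore each piece $\Delta_{\calT,w,!}\Q$ is, up to the shift $[-n]$, the intermediate extension of its restriction to the regular locus (as $\Delta_{\calT,w}$ is a closed immersion identifying $\calT$ with a smooth closed substack of $\calT\times_\car\calT$ whose intersection with the regular locus is precisely the component $\Delta_{\calT,w}(\calT_\reg)$). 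Applying Proposition~\ref{IE} inductively along the filtration then shows that $\calH^{2k}(q,q)_!\Q[-n]$ is perverse and is the intermediate extension from $(\calT\times_\car\calT)_\reg$.

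The $W\times W$-equivariant structure is now automatic: the regular locus is stable under the natural $W\times W$-action, where $(w_1,w_2)$ sends the $\Delta_{\calT,w}$-component to the $\Delta_{\calT,w_1 w w_2^{-1}}$-component by acting via $w_2$ on $\calT_\reg$. By uniqueness of the intermediate extension, this $W\times W$-action on $\calH^{2k}(q,q)_!\Q|_{(\calT\times_\car\calT)_\reg}$ extends uniquely to all of $\calH^{2k}(q,q)_!\Q$. The main difficulty I anticipate is the clean identification of $(q_w)_!\Q$ as a pure, even-degree, constant complex on $\calT$ so that the spectral sequence degenerates and each successive quotient in the resulting filtration is visibly an intermediate extension; once that is in place, Proposition~\ref{IE} delivers both the perversity and the intermediate-extension property in one stroke.
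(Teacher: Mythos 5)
Your overall skeleton (filtration by the $\calZ_w$, the spectral sequence, identification of the strata pushforwards through the twisted diagonals, then Proposition~\ref{IE}) is the paper's, but the two steps that carry the actual content are wrong. The key error is the parity claim. The map $\Delta_{\calT,w}:\calT\to\calT\times_\car\calT$ is \emph{not} a closed immersion of stacks: writing $\calT\simeq\t\times\B(T)$ and $\calT\times_\car\calT\simeq(\t\times_\car\t)\times\B(T)\times\B(T)$, it is the product of the closed immersion $\Delta_{\t,w}$ with the diagonal $\B(T)\to\B(T)\times\B(T)$, which is a $T$-torsor. By Proposition~\ref{postnikovtore}, the compactly supported pushforward along a $T$-torsor has nonzero cohomology sheaves in \emph{every} degree from $n$ to $2n$, of both parities. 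Hence $E_1^{ij}=\calH^{i+j}\Delta_{\calT,w_i\,!}\Q$ does not vanish for odd $i+j$, your parity-based degeneration argument collapses, and your conclusion $\calH^{\mathrm{odd}}(q,q)_!\Q=0$ is simply false (already for $\SL_2$, where $n=1$, the strata contribute in degrees $1$ and $2$). Note that the statement you are proving asserts shifted perversity of $\calH^i(q,q)_!\Q$ for all $i$, not odd-degree vanishing; your argument proves a false strengthening in even degrees and says nothing in odd ones. For the same reason, your description of the graded pieces as ``$\Delta_{\calT,w,!}\Q$ tensored with modules'' with $\Delta_{\calT,w}$ a smooth closed substack is off: $\Delta_{\calT,w\,!}\Q$ is a complex spread over degrees $n,\dots,2n$, and the correct graded pieces are its individual cohomology sheaves
$$
E_1^{ij}=\bigwedge^{2n-i-j}X_*(T)\otimes\left(\Delta_{\t,w_i\,!}\Q\boxtimes\overline{\mathbb{Q}}_{\ell,\,\B(T)\times\B(T)}\right)(n-i-j),
$$
which are indeed (up to the shift $[-n]$) perverse intermediate extensions from the regular locus; this is formula (\ref{PE}) in the paper.

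The missing idea is how to degenerate the spectral sequence once parity is unavailable: the paper fixes a finite subfield $k_o$ over which $G$, $T$, $B$ are defined with $T$ split, observes from the formula above that $E_1^{ij}$ is, up to shift, a pure perverse sheaf whose weight is a strictly monotone function of the total degree $i+j$ (the Tate twist $(n-i-j)$), and concludes that every differential, being a map between pure shifted perverse sheaves of different weights, vanishes (Lemma~\ref{specseq}). With degeneration in hand, each $\calH^k(q,q)_!\Q[-n]$ is a successive extension of the perverse sheaves $E_1^{i,k-i}$, each of which is the intermediate extension of its restriction to the regular locus, and Proposition~\ref{IE} finishes the argument; your final step (the $W\times W$-structure via the regular locus and unicity of intermediate extension) is fine and agrees with the paper. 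Also, as a minor point, your computation of $(q_w)_!\Q$ conflates base and fibre: the fibres of $q_w:\calZ_w\to\calT$ are just $[\u_w/U_w]$, which is cohomologically trivial, so $(q_w)_!\Q\simeq\Q$ on the nose; the $\B(T)$-factor of $\calT$ contributes nothing at this stage, and all the nontrivial cohomology comes from the subsequent pushforward along the $T$-torsor component of $\Delta_{\calT,w}$.
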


To prove the proposition we will use a weight argument. Therefore we choose a finite subfield $k_o$ of $k$ such that $G, T,$ and $B$ are defined over $k_o$ and $T$ is split over $k_o$.
\bigskip

Recall that we have a stratification (see \S \ref{SS})

$$
\calZ=\coprod_{w\in W}\calZ_w.
$$

The restriction of $(q,q)$ to $\calZ_w$ is the composition of the projection $q_w:\calZ_w\rightarrow\calT$ followed by the morphism $
\Delta_{\calT,w}:\calT\rightarrow\calT\times_\car\calT$.
\bigskip

As the cohomology of $[\u_w/U_w]$ is trivial we deduce that
\begin{equation}
((q,q)|_{\calZ_w})_!\Q=\Delta_{\calT,w\,!}\Q.
\label{Mor3}\end{equation}

We choose a total order $\{w_0,w_1,\dots\}$ on $W$ so that we have a decreasing filtration of closed substacks
$$
\calZ=\calZ_0\supset\calZ_1\supset\cdots\supset\calZ_{|W|-1}\supset\calZ_{|W|}=\emptyset
$$
satisfying $\calZ_i\backslash\calZ_{i+1}=\calZ_{w_i}$ for all $i$.
\bigskip

By \cite[Chapitre 6, 2.5]{SGA} we have a spectral sequence
$$
E_1^{ij}=\calH^{i+j}\left((q,q)|_{\calZ_i\backslash\calZ_{i+1}}\right)_!\Q\Rightarrow \calH^{i+j}(q,q)_!\Q.
$$
\bigskip

\begin{lemma}
This spectral sequence degenerates at $E_1$.
\label{specseq}\end{lemma}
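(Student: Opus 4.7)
The plan is to run a purity/weight argument. Since the whole filtration is defined over a finite subfield $k_o\subset k$ over which $G,T,B$ are defined and $T$ is split, the spectral sequence is canonically Frobenius-equivariant. We will show that each $E_1^{ij}$ is pure of weight $2(i+j-n)$, and then invoke the fact that a Frobenius-equivariant morphism between pure sheaves of different weights vanishes; since $d_r$ shifts the total degree by one, this forces $d_r=0$ for all $r\geq 1$.

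First I would identify the $E_1$-terms. The restriction $(q,q)|_{\calZ_w}$ factors as $\calZ_w\xrightarrow{q_w}\calT\xrightarrow{\Delta_{\calT,w}}\calT\times_\car\calT$, where $\Delta_{\calT,w}$ is the twisted diagonal $t\mapsto(w(t),t)$. Factoring $q_w$ through $[\t/B_w]$, the first piece $[\b_w/B_w]\to[\t/B_w]$ is a rank $\dim\u_w$ vector bundle (contributing a shift $[-2\dim\u_w]$ and twist $(-\dim\u_w)$ under $!$-pushforward), while the second piece $[\t/B_w]\to[\t/T]$ is a gerbe with fibers $\B(U_w)$, whose compactly supported cohomology equals $\Q(\dim U_w)[-2\dim U_w]$ since $U_w$ is a connected unipotent group. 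Because $\dim\u_w=\dim U_w$, the shifts and twists cancel exactly and $q_{w\,!}\Q\simeq\Q$, so that
$$\left((q,q)|_{\calZ_w}\right)_!\Q\simeq\Delta_{\calT,w\,!}\Q.$$

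Next I would compute the weights of the cohomology sheaves of $\Delta_{\calT,w\,!}\Q$. Under $\calT\simeq\t\times\B(T)$, the twisted diagonal decomposes as an external product and $\Delta_{\calT,w\,!}\Q\simeq\Delta_{\t,w\,!}\Q\boxtimes\Delta_{\B(T)\,!}\Q$. The first factor is the constant sheaf on the closed graph of $w$ in $\t\times_\car\t$, pure of weight zero and concentrated in degree zero. For the second, base change along the atlas $\pt\to\B(T)\times\B(T)$ fits into a cartesian square with the smooth map $T\to\pt$, so the stalks of $\Delta_{\B(T)\,!}\Q$ are identified with $H_c^\bullet(T,\Q)\simeq\bigwedge^{2n-\bullet}X_*(T)\otimes\Q(n-\bullet)$, concentrated in degrees $n$ to $2n$ and pure of weight $2(\bullet-n)$ in degree $\bullet$. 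By the K\"unneth formula, $\calH^k(\Delta_{\calT,w\,!}\Q)$ is pure of weight $2(k-n)$, and therefore $E_1^{ij}$ is pure of weight $2(i+j-n)$.

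The weight-shift argument then closes the proof: the differential $d_r:E_r^{ij}\to E_r^{i+r,j-r+1}$ is a Frobenius-equivariant morphism between pure sheaves of weights $2(i+j-n)$ and $2(i+j-n)+2$; since these weights differ, the Frobenius-invariant part of the Hom-space vanishes, so $d_r=0$ for every $r\geq 1$, and the spectral sequence degenerates at $E_1$. The main subtlety I anticipate is keeping the weight bookkeeping honest through the atlas base-change identification of the stalks of $\Delta_{\B(T)\,!}\Q$ with $H_c^\bullet(T,\Q)$ as mixed Frobenius modules on an Artin stack; once this is established, the degeneration follows formally from Deligne's theorem on weights.
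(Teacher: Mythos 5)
Your proof is correct and takes essentially the same approach as the paper: identify each $E_1^{ij}$ explicitly as a constant-type sheaf on a twisted diagonal carrying a Tate twist depending only on the total degree $i+j$, observe that these terms are pure of weights jumping by $2$ across every differential, and kill all $d_r$ by Frobenius-equivariance over $k_o$ (the paper phrases the vanishing via pure perverse sheaves of different weights, you via stalkwise Frobenius eigenvalues, which amounts to the same standard fact). The only slip is harmless and occurs in a step the paper had already established as (\ref{Mor3}): one has $R\Gamma_c(\B(U_w),\Q)\simeq\Q(\dim U_w)[2\dim U_w]$, i.e.\ concentrated in degree $-2\dim U_w$, which is what actually cancels the contribution $[-2\dim\u_w](-\dim\u_w)$ of the rank-$\dim\u_w$ vector bundle and yields $q_{w\,!}\Q\simeq\Q$ as you claim.
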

\bigskip

\begin{proof}
As $\calZ_i\backslash\calZ_{i+1}=\calZ_{w_i}$ and $\calT=T\times\B(T)$, by (\ref{Mor3}) we have
\begin{equation}
E_1^{ij}=\calH^{i+j}\Delta_{\calT,w_i\,!}\Q=\bigwedge^{2n-i-j}X_*(T)\otimes\left(\Delta_{\t,w_i\, !}\Q\boxtimes {\Q}_{,\B(T)\times\B(T)}\right)(n-i-j)
\label{PE}\end{equation}
where $\Delta_{\t,w}:\t\rightarrow \t\times_\car \t$ is given by $t\mapsto (w(t),t)$. Notice that 
$$
\Delta_{\t,w_i\,!}\Q\boxtimes {\Q}_{,\B(T)\times\B(T)}
$$
is (up to a shift) perverse and pure of weight $0$. Hence  $E_1^{ij}$ is (up to a shift) pure of weight $2(i+j)$.
\bigskip

Therefore the sources and the targets of the differentials $d_1$ are (up to a shift) perverse of different weight and so $d_1$ vanishes. We keep applying the same argument for the other differentials and we deduce that the spectral sequence degenerates at $E_1$, i.e. $E_1=E_\infty$.
\end{proof}
\bigskip

\begin{proof}
[Proof of Proposition \ref{IE1}]
It follows from Lemma \ref{specseq} that $\calH^k(q,q)_!\Q$ is a successive extension of $E_1^{ij}$'s which are intermediate extension of their restriction to the open substack of regular elements by (\ref{PE}). It is thus perverse (up to a shift by $-n$) and so the proposition is a consequence of Proposition \ref{IE}.
\end{proof}
\bigskip

Following the proof of (\ref{eq11}) we have  also a  morphism

$$
(q,q)_!\Q\longrightarrow \Delta_{\calT\, !}\Q
$$
where $\Delta_\calT:\calT\rightarrow\calT\times_\car\calT$ is the diagonal morphism. We thus get a morphism

$$
\calH^i(q,q)_!\Q\longrightarrow\calH^i\Delta_{\calT\, !}\Q
$$
and since, by Proposition \ref{IE1} the sheaf $\calH^i(q,q)_!\Q$ is $W\times W$-equivariant, we get a morphism

$$
\calH^i(q,q)_!\Q\longrightarrow\bigoplus_{w\in W}\calH^i\Delta_{\calT,w\, !}\Q.
$$
The source and the target of this morphism are intermediate extension of their restriction to regular elements (see Proposition \ref{IE1}). It is thus an isomorphism as its restriction to regular element is an isomorphism.

\section{Main results}

\subsection{Derived categories}

Fix $\lambda\in\frak{L}$. In this section we choose a geometric point $c:\mathrm{Spec}(k)\rightarrow\car_\lambda$. For any stack $\calX$ above $\car_\lambda$ we put
$$
\calX_c:=\mathrm{Spec}(k)\times_\car\calX,
$$
and for any complex $K\in\dcb(\calX)$ we denote by $K_c\in\dcb(\calX_c)$ the pullback of $K$ along the base change morphism $\calX_c\rightarrow\calX$.

We have a commutative diagram with Cartesian squares
\begin{equation}
\xymatrix{\overline{\calT_c}\ar[d]_{\iota}&\overline{\calT_c}\times\calG_c\ar[r]^-{\pr_2}\ar[d]^\iota\ar[l]_-{\pr_1}&\calG_c\ar[d]^\iota\\
\ocalT_\lambda&\ocalT_\lambda\times_{\car_\lambda}\calG_\lambda\ar[r]\ar[l]&\calG_\lambda}
\label{c}\end{equation}
We then define the pair $(\R_c,\I_c)$ of adjoint functors
$$
\xymatrix{\dcb(\overline{\calT_c})\ar@/_/[rr]_-{\I_c}&&\dcb(\calG_c)\ar@/_/[ll]_-{\R_c}}
$$
by
$$
\I_c: K\mapsto\pr_{2!}(\bN_c\otimes\pr_1^*(K)),\hspace{1cm}\R_c : K\mapsto\pr_{1!}(\bN_c\otimes\pr_2^*(K)).
$$
The following lemma follows from the base change theorem and the projection formula.

\begin{lemma} We have the commutation formulas.

\noindent (a)
$$
\iota_!\circ\I_c=\I_\lambda\circ\iota_!,\hspace{1cm}\I_c\circ\iota^*=\iota^*\circ\I_\lambda.
$$
(b)
$$
\iota_!\circ\R_c=\R_\lambda\circ\iota_!,\hspace{1cm}\R_c\circ\iota^*=\iota^*\circ\R_\lambda.
$$
\label{commfor}\end{lemma}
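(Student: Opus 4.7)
The plan is to prove all four identities by a direct computation from the definitions, using three standard facts about the cohomological formalism applied to the two Cartesian squares in diagram (\ref{c}): (i) proper base change for $!$-pushforward along the horizontal arrows composed with the vertical $\iota$'s; (ii) the projection formula $f_!(A\otimes f^*B)\simeq f_!(A)\otimes B$; and (iii) the observation that by construction $\bN_c = \iota^*(\bN_\lambda)$, since $\bN_c$ is the restriction of the kernel $\bN_\lambda$ along the base change morphism $\ocalT_\lambda\times_{\car_\lambda}\calG_\lambda\hookleftarrow \overline{\calT_c}\times\calG_c$.

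For the first identity of (a), I would start from $\iota_!\I_c(K)=\iota_!\pr_{2!}(\bN_c\otimes\pr_1^*K)$, commute $\iota_!$ past $\pr_{2!}$ by functoriality of $!$-pushforward applied to the right Cartesian square of (\ref{c}), rewrite $\bN_c=\iota^*\bN_\lambda$, then apply the projection formula to pull $\bN_\lambda$ out of $\iota_!$, and finally use proper base change in the left Cartesian square to exchange $\iota_!$ and $\pr_1^*$, ending with $\pr_{2!}(\bN_\lambda\otimes\pr_1^*\iota_!(K))=\I_\lambda(\iota_!K)$.

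For the second identity of (a), starting from $\iota^*\I_\lambda(K)$, I would apply proper base change for the right Cartesian square to move $\iota^*$ inside $\pr_{2!}$, then use that $\iota^*$ is monoidal and commutes with $\pr_1^*$ (left Cartesian square), together with $\iota^*\bN_\lambda=\bN_c$, to identify the result with $\I_c(\iota^*K)$. The identities of (b) are obtained by the very same argument with the roles of $\pr_1$ and $\pr_2$ exchanged; the diagram (\ref{c}) is symmetric in the two projections, so no new ingredient is needed.

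I do not foresee any real obstacle — the entire argument is mechanical bookkeeping with base change and the projection formula, which is why the authors content themselves with citing these two facts in the statement preceding the lemma.
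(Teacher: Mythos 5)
Your proof is correct and follows exactly the route the paper takes: the authors simply assert that the lemma "follows from the base change theorem and the projection formula," and your computation — using $\bN_c=\iota^*\bN_\lambda$, the formula (\ref{form2}) for $\I_\lambda$, base change in the two Cartesian squares of (\ref{c}), and the projection formula — is the intended spelling-out of that assertion.
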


\begin{theorem}
We have $$
\R_c\circ\I_c\simeq 1.\hspace{1cm}
$$

\label{513}\end{theorem}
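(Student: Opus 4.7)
The plan is to deduce Theorem \ref{513} from the stratum-level isomorphism $\bN_\lambda\circ_\calG\bN_\lambda\simeq\Delta_{\ocalT_\lambda!}\Q$ of Theorem \ref{maintheo1} by pulling back along the geometric point $\iota:c\to\car_\lambda$. Exactly as in Section \ref{counit-section}, the composition $\R_c\circ\I_c$ is the restriction functor of a cohomological correspondence whose kernel is the convolution
$$
\bN_c\circ_{\calG_c}\bN_c:=\pr_{\ocalT_c,\ocalT_c\,!}\bigl(\bN_c\boxtimes_{\calG_c}\bN_c\bigr),
$$
so the theorem is equivalent to producing a natural isomorphism $\bN_c\circ_{\calG_c}\bN_c\simeq\Delta_{\ocalT_c!}\Q$, after which the restriction functor with diagonal kernel is automatically the identity.

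The key compatibility is that $\iota^*$ commutes with both convolution and the diagonal pushforward. On the one hand, the projection square defining the convolution (obtained by base-changing the three-factor stack over $\car_\lambda$ to $c$) is Cartesian, so proper base change gives $\iota^*\circ\pr_{\ocalT_\lambda,\ocalT_\lambda\,!}\simeq\pr_{\ocalT_c,\ocalT_c\,!}\circ\iota^*$, while compatibility of $\iota^*$ with external tensor products (Künneth) yields $\iota^*(\bN_\lambda\boxtimes_{\calG_\lambda}\bN_\lambda)\simeq\bN_c\boxtimes_{\calG_c}\bN_c$, using the definition $\bN_c=\iota^*\bN_\lambda$ fixed in diagram (\ref{c}). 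Combining these produces $\iota^*(\bN_\lambda\circ_\calG\bN_\lambda)\simeq\bN_c\circ_{\calG_c}\bN_c$. On the other hand, $\Delta_{\ocalT_\lambda}$ sits in a Cartesian square with $\Delta_{\ocalT_c}$, so proper base change also gives $\iota^*\Delta_{\ocalT_\lambda!}\Q\simeq\Delta_{\ocalT_c!}\Q$.

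Pulling back the isomorphism of Theorem \ref{maintheo1} along $\iota$ then produces the desired isomorphism $\bN_c\circ_{\calG_c}\bN_c\simeq\Delta_{\ocalT_c!}\Q$, whence $\R_c\circ\I_c\simeq 1$. Given Theorem \ref{maintheo1}, this reduction is essentially bookkeeping with Cartesian squares; the only point deserving care is verifying that the two base-change compatibilities (for convolution and for the diagonal) intertwine the natural morphism constructed in Section \ref{counit-section} over $\lambda$ with its analogue built by the same recipe directly over $c$, so that pulling back an isomorphism yields the intended isomorphism rather than one differing by an undetected automorphism.
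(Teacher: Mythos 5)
Your proof is correct and follows essentially the same route as the paper: the paper's own argument is precisely to pull back the isomorphism $\bN_\lambda\circ_\calG\bN_\lambda\simeq\Delta_{\ocalT_\lambda!}\Q$ of Theorem \ref{maintheo1} along $\overline{\calT_c}\times\overline{\calT_c}\rightarrow\ocalT_\lambda\times_{\car_\lambda}\ocalT_\lambda$, and your base-change and K\"unneth bookkeeping simply makes explicit what the paper leaves implicit.
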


\begin{proof}
The isomorphism of functors $\R_c\circ\I_c\simeq 1$ is obtained by taking the pullback along $\overline{\calT_c}\times\overline{\calT_c}\rightarrow\ocalT_\lambda\times_{\car_\lambda}\ocalT_\lambda$ of the isomorphism of complexes
$$
\bN_\lambda\circ_\calG\bN_\lambda\overset{\simeq}{\longrightarrow}\Delta_{\ocalT_\lambda\, !}\Q
$$
of Theorem \ref{maintheo1}.
\end{proof}

We have also the functor 
$$
\Ind_c:\dcb(\calT_c)\longrightarrow\dcb(\calG_c)
$$
defined by base change from the diagram (\ref{Indbis}). It satisfies

$$
\Ind_c=\I_c\circ\pi_{c\, !}
$$
where $\pi_c$ is the $W$-torsor $\calT_c\rightarrow\ocalT_c$. When $c=0$ we will use the notation $\pi_\nil$ instead of $\pi_c$. 
\bigskip

We define $\dcb(\calG_c)^\Sp$ as the triangulated subcategory of $\dcb(\calG_c)$ generated by the simple direct factors of $\Ind_c(\Q)$ which, by Springer theory, are indexed by the irreducible characters of the Weyl group $W_c$ of the centralizer of $c$. 

\begin{proposition}The functor $\I_c:\dcb(\ocalT_c)\rightarrow \dcb(\calG_c)^\Sp$ induces a bijection between simple perverse sheaves compatible with parametrization by irreducible representations of $W_c$.
\label{Springer}\end{proposition}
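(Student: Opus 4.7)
The core of the argument is to exploit the factorization $\Ind_c = \I_c \circ \pi_{c\,!}$ together with $\R_c \circ \I_c \simeq 1$ (Theorem \ref{513}) and the Springer decomposition of $\Ind_c(\Q)$ to produce, for each $\rho \in \hat W_c$, a distinguished simple perverse sheaf $\mathcal{L}_\rho$ on $\ocalT_c$ with $\I_c(\mathcal{L}_\rho) \simeq I_\rho$; this will yield the bijection compatible with the parametrization by $\hat W_c$.

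Concretely, for each $\rho \in \hat W_c$ set $\mathcal{L}_\rho := \R_c(I_\rho)$, where $I_\rho$ is the $\rho$-th simple summand in the Springer decomposition
\[
\Ind_c(\Q) \;=\; \bigoplus_{\rho \in \hat W_c} V_\rho \otimes I_\rho,
\]
furnished by Springer theory applied to the centralizer $L = C_G(c)$ via the identification $\calG_c \simeq [(c + \g_\nil^L)/L]$ and the recognition of $p_c$ as (a disjoint union of $|W/W_c|$ copies of) a Springer-type resolution for $L$. Applying $\R_c$ to this decomposition and using $\R_c \circ \Ind_c = \R_c \circ \I_c \circ \pi_{c\,!} \simeq \pi_{c\,!}$, one obtains the matching decomposition
\[
\pi_{c\,!}\Q \;\simeq\; \bigoplus_{\rho \in \hat W_c} V_\rho \otimes \mathcal{L}_\rho.
\]

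Applying $\I_c$ and comparing with the Springer decomposition gives
\[
\bigoplus_\rho V_\rho \otimes \I_c(\mathcal{L}_\rho) \;=\; \I_c(\pi_{c\,!}\Q) \;=\; \Ind_c(\Q) \;=\; \bigoplus_\rho V_\rho \otimes I_\rho,
\]
so the Krull--Schmidt uniqueness of isotypic decompositions (with the $V_\rho$ pairwise non-isomorphic irreducible $W_c$-modules) forces $\I_c(\mathcal{L}_\rho) \simeq I_\rho$ for every $\rho \in \hat W_c$. Pairwise distinctness of the $\mathcal{L}_\rho$ is then automatic, since an isomorphism $\mathcal{L}_\rho \simeq \mathcal{L}_{\rho'}$ would yield $I_\rho \simeq I_{\rho'}$. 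Simplicity as perverse sheaves follows by a standard argument: if $\mathcal{L}_\rho \simeq A \oplus B$ with $A, B$ nonzero perverse, then $\I_c(A) \oplus \I_c(B) \simeq I_\rho$ would split the simple object $I_\rho$, so say $\I_c(A) = 0$; but then $A \simeq \R_c \I_c(A) = 0$ using $\R_c \I_c \simeq 1$, a contradiction.

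The main technical obstacle is establishing the perversity of each $\mathcal{L}_\rho = \R_c(I_\rho)$, that is, the $t$-exactness of $\R_c$ on the Springer subcategory. This should reduce, via the relation $\Res = \pi^* \circ \R$ and the étaleness of $\pi$ (so that $\pi^*$ both preserves and reflects perversity), to the $t$-exactness of the global Lusztig restriction $\Res$ established by Bezrukavnikov and Yom Din \cite{BY}, combined with the compatibility of $\R_c$ with base change from $\R_\lambda$; one must track how the perverse $t$-structure behaves through the $W$-descent to $\ocalT$ and the subsequent fiber restriction to $c$.
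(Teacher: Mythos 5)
Your plan has two genuine gaps, and the more serious one sits exactly where the content of the proposition lies. From the factorization $\Ind_c=\I_c\circ\pi_{c\,!}$ you write $\bigoplus_\rho V_\rho\otimes\I_c(\mathcal{L}_\rho)\simeq\bigoplus_\rho V_\rho\otimes I_\rho$ and invoke ``Krull--Schmidt uniqueness of isotypic decompositions'' to force $\I_c(\mathcal{L}_\rho)\simeq I_\rho$. But an isotypic decomposition only makes sense relative to a group action, and your two sides are decomposed with respect to \emph{two different actions}: the deck (Galois) action of $W$ on $\pi_{c\,!}\Q$ coming from the torsor $\pi_c$, and the Springer action on $\Ind_c(\Q)$ which is what defines the parametrization of the summands $I_\rho$ by $\widehat{W}_c$. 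Unless the isomorphism $\Ind_c(\Q)\simeq\I_c(\pi_{c\,!}\Q)$ is shown to intertwine these two actions, bare Krull--Schmidt in $\dcb(\calG_c)$ only matches the multiset of simple constituents (with total multiplicities), and the summands attached to distinct characters of equal dimension can be permuted or redistributed; in particular ``compatible with the parametrization by irreducible representations of $W_c$'' --- the actual assertion of the proposition --- is not obtained. This compatibility of the kernel-induced $W$-action with the classical Springer action is precisely the nontrivial point; the paper secures it not by a formal argument at the fiber $c$, but by reducing to $c=0$, passing from $\I_\nil$ to ${^p}\I_\nil$ via $\calM(\B(N))\simeq\calM(\B(W))$, and then using that ${^p}\I_\nil$ is the nilpotent restriction of the \emph{global} functor ${^p}\I$, whose effect on simples is identified by restricting to the semisimple regular locus where the two actions are visibly the same (the Borho--MacPherson/Lusztig normalization). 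Your argument never leaves the fiber over $c$ and therefore has no access to this identification.

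The second gap is the one you flag yourself: perversity (and simplicity) of $\mathcal{L}_\rho:=\R_c(I_\rho)$. The proposed reduction to the $t$-exactness of the global $\Res$ of Bezrukavnikov--Yom Din does not work as stated, because $\R_c$ is obtained from $\R_\lambda$ by $*$-restriction along the closed immersion $\calG_c\hookrightarrow\calG_\lambda$ (e.g.\ to the nilpotent cone when $c=0$), and such a restriction is not $t$-exact; base-change compatibility (Lemma \ref{commfor}) transports the functors but not the $t$-exactness. In fact, $t$-exactness of $\R_c$ on the Springer block is essentially equivalent to the statement being proved (it follows a posteriori from $\I_c$ being an equivalence onto $\dcb(\calG_c)^\Sp$ sending simples to simples), so assuming it risks circularity. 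Note that the paper's route avoids this entirely: it applies the induction functor to the known simple objects on the torus side and never needs $\R_c$ of a Springer summand to be perverse. You could likewise drop the definition $\mathcal{L}_\rho=\R_c(I_\rho)$ and work directly with the standard simples $\mathcal{L}_\rho$ on $\ocalT_c$, deducing simplicity of $\I_c(\mathcal{L}_\rho)$ from $\R_c\circ\I_c\simeq 1$ as you do; but then the entire burden falls on the equivariance identification above, which must be supplied by a global argument of the kind the paper uses.
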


\begin{proof}The essential case is $c=0$ in which case we have $\calT_c=\B(T)$, $\ocalT_c=\B(N)$ and $\dcb(\calG_c)^\Sp=\dcb(\calG_\nil)^\Sp$. The simple objects of $\dcb(\B(N))$ and $\dcb(\calG_\nil)^\Sp$ are both parametrized by the irreducible characters of $W$. This is  Springer theory in the second case and in the first case the simple objects are the simple direct  summands of $\pi_{\nil\,*}\Q$. The pullback functor $\overline{s}_\nil^![n](n)$ along the obvious morphism $\overline{s}_\nil:\B(N)\rightarrow\B(W)$ induces an equivalence between the categories of perverse sheaves that is compatible with the indexation of the simple perverse sheaves by the irreducible characters of $W$. By Lemma \ref{compatible} applied to nilpotents, we have

$$
{^p}\I_\nil=\I_\nil\circ\overline{s}_\nil^![n](n).
$$
We are thus reduced to prove the statement of the proposition for ${^p}\I_\nil:\calM(\B(W))\rightarrow\calM(\calG_\nil)^{\rm Spr}$. Unlike $\I_\nil$, the construction of ${^p}\I_\nil$ extends to a global functor ${^p}\I$. A standard argument using the functor ${^p}\I$ and its restriction to semisimple regular elements proves the proposition for ${^p}\I_\nil$.  
\end{proof}

\begin{proposition}The functor $\I_c$ induces an equivalence of categories $\dcb(\ocalT_c)\rightarrow\dcb(\calG_c)^\Sp$ with inverse functor $\R_c$.
\end{proposition}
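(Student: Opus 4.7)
The plan is to leverage the adjoint pair $(\R_c, \I_c)$ together with two facts already in place: Theorem~\ref{513} yields $\R_c \circ \I_c \simeq 1$, which makes $\I_c$ fully faithful (via the counit of the adjunction being an isomorphism), while Proposition~\ref{Springer} identifies the action of $\I_c$ on simple perverse sheaves with the Springer parametrization. It therefore suffices to show that the essential image of $\I_c$ coincides with $\dcb(\calG_c)^\Sp$; the restriction of $\R_c$ to that image then automatically supplies the quasi-inverse.

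For the inclusion $\dcb(\calG_c)^\Sp \subseteq \mathrm{Im}(\I_c)$, observe that $\dcb(\calG_c)^\Sp$ is by definition the triangulated subcategory generated by the simple direct summands of $\Ind_c(\Q)$. By Proposition~\ref{Springer} each such summand is $\I_c$ applied to a simple perverse sheaf on $\ocalT_c$. Since the essential image of the triangulated functor $\I_c$ is itself a triangulated subcategory, it contains the triangulated subcategory generated by these summands, giving $\dcb(\calG_c)^\Sp$ inside the essential image of $\I_c$.

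For the reverse inclusion, I would first prove the auxiliary statement that $\dcb(\ocalT_c)$ is generated as a triangulated category by its simple perverse sheaves. Identifying $\ocalT_c \simeq \B(N_L(T))$ for $L = C_G(z)$ and $z \in \pi^{-1}(c)$, and applying Lemma~\ref{descentlem} to the $W^L$-torsor $\B(T) \to \B(N_L(T))$, the question reduces to generation in the category of $W^L$-equivariant complexes on $\B(T)$. There the abelian category of perverse sheaves is semisimple, being equivalent to finite-dimensional $\overline{\mathbb{Q}}_\ell$-representations of $N_L(T)$ (semisimple in characteristic zero, since $T$ acts diagonally and $W^L$ is finite), and the standard truncation triangles for the $p$-perverse $t$-structure reduce any bounded complex to its perverse cohomology sheaves. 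Granted this, $\I_c$ sends a generating family of $\dcb(\ocalT_c)$ into $\dcb(\calG_c)^\Sp$, so its entire essential image lies in $\dcb(\calG_c)^\Sp$.

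Combining both inclusions shows that $\I_c$ is an equivalence onto $\dcb(\calG_c)^\Sp$, and the isomorphism $\R_c \circ \I_c \simeq 1$ identifies the restriction of $\R_c$ to $\dcb(\calG_c)^\Sp$ as its quasi-inverse. The only non-formal step is the generation-by-simple-perverse-sheaves statement for $\dcb(\ocalT_c)$, which is where the characteristic-zero coefficient assumption really enters; everything else is a direct consequence of Theorem~\ref{513} and Proposition~\ref{Springer}.
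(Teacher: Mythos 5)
There is a genuine gap at the very first step, and it is precisely the point the paper has to work for. You assert that Theorem \ref{513} ``makes $\I_c$ fully faithful (via the counit of the adjunction being an isomorphism)''. But Theorem \ref{513} only produces \emph{some} natural isomorphism $\R_c\circ\I_c\simeq 1$, constructed by pulling back the kernel isomorphism $\bN_\lambda\circ_\calG\bN_\lambda\simeq\Delta_{\ocalT_\lambda!}\Q$; it is never identified with the counit of the adjunction $(\R_c,\I_c)$, and an arbitrary natural isomorphism $\R_c\I_c\simeq 1$ does not formally imply that the counit is invertible. What it does give is that $\I_c$ is \emph{faithful}: for $f\in\mathrm{Ext}^i(A,B)$ one has $\R_c\I_c(f)=\theta_B[i]^{-1}\circ f\circ\theta_A$, so $\I_c(f)=0$ forces $f=0$. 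This is exactly how the paper uses Theorem \ref{513} (``induces an injection between the $\mathrm{Ext}^i$ of simple perverse sheaves''). Fullness is the hard part, and the paper obtains it by a dimension count that your proposal omits entirely: the proposition following the statement computes $\mathrm{Ext}^i(\mathcal{L}_\varphi,\mathcal{L}_{\varphi'})=\left(V_\varphi\otimes H^i(\B(T),\Q)\otimes V_{\varphi'}^*\right)^W$ on the $\ocalT_c$ side, and Achar's Theorem 4.6 gives the same answer for the corresponding Springer simples on the $\calG_c$ side (matched via Proposition \ref{Springer}); injectivity between spaces of equal finite dimension then yields isomorphisms on all $\mathrm{Ext}$'s between simples, and Beilinson's lemma (Schn\"urer, Lemma 6) upgrades this to the equivalence $\dcb(\ocalT_c)\simeq\dcb(\calG_c)^\Sp$.

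The gap propagates into your essential-image argument: the essential image of a triangulated functor is closed under cones (hence a triangulated subcategory containing the Springer summands) only once the functor is known to be fully faithful, so the inclusion $\dcb(\calG_c)^\Sp\subseteq\mathrm{Im}(\I_c)$ is not justified without the missing fullness. Your remaining observations are fine but not the crux: that $\dcb(\ocalT_c)$ is generated by its simple perverse sheaves (so $\mathrm{Im}(\I_c)\subseteq\dcb(\calG_c)^\Sp$ by Proposition \ref{Springer}) is correct and implicit in the paper's use of Beilinson's lemma, though note that perverse sheaves on $\ocalT_c\simeq\B(N_L(T))$ are representations of the finite component group $W^L=\pi_0(N_L(T))$, the connected torus contributing nothing. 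To repair the proof you need either to show that the isomorphism of Theorem \ref{513} can be taken to be the counit, or (as the paper does) to input the $\mathrm{Ext}$ computations on both sides and conclude via Beilinson's lemma.
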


\begin{proof}Since $\R_c\circ \I_c\simeq 1$, the functor $\I_c$ induces an injection between the ${\rm Ext}^i$ of  simple perverse sheaves. On the other hand, by the proposition below  together with \cite[Theorem 4.6] {Achar} and Proposition \ref{Springer}, we know that the ${\rm Ext}^i$ of the simple objects that corresponds under $\I_c$ have the same dimension and so $\I_c$ induces an isomorphism between the ${\rm Ext}^i$. By a lemma of Beilinson \cite[Lemma 6]{Schnurer} , the functor $\I_c$ induces thus an equivalence of categories $\dcb(\ocalT_c)\simeq\dcb(\calG_c)^\Sp$.
\end{proof}

Consider the decomposition
$$
\pi_{\nil\, !}\Q=\bigoplus_\varphi V_\varphi\otimes \mathcal{L}_\varphi
$$
where the sum runs over the irreducible $\Q$-characters of $W$, $V_\varphi$ is an irreducible representation affording the character $\varphi$ and the $\mathcal{L}_\varphi$ are the  irreducible smooth $\ell$-adic sheaf on $\B(N)$.

\begin{proposition}For any irreducible characters $\varphi$ and $\varphi'$ of $W$ we have

$$
{\rm Ext}^i(\mathcal{L}_\varphi,\mathcal{L}_{\varphi'})=\left(V_\varphi\otimes H^i(\B(T),\Q)\otimes V_{\varphi'}^*\right)^W.
$$
\end{proposition}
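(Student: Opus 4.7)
The plan is to exploit the $W$-torsor structure $\pi_\nil:\B(T)\to\B(N)$ together with Lemma \ref{descentlem} to reduce the $\mathrm{Ext}$ computation on $\B(N)$ to a $W$-equivariant computation on $\B(T)$, where the sheaves involved become constant.

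First I would identify the pullbacks $\pi_\nil^*\mathcal{L}_\varphi$. Starting from the given decomposition $\pi_{\nil,!}\Q=\bigoplus_\varphi V_\varphi\otimes\mathcal{L}_\varphi$, apply $\pi_\nil^*$: since $\pi_\nil$ is a $W$-torsor, the canonical isomorphism $\pi_\nil^*\pi_{\nil,!}\Q\simeq \Q[W]\otimes\Q_{\B(T)}$ identifies $\pi_\nil^*\pi_{\nil,!}\Q$ with the regular representation (as a $W$-equivariant constant sheaf on $\B(T)$). Decomposing the regular representation as $\Q[W]=\bigoplus_\varphi V_\varphi\otimes V_\varphi^*$ and matching summands yields $\pi_\nil^*\mathcal{L}_\varphi\simeq V_\varphi^*\otimes\Q_{\B(T)}$ as $W$-equivariant sheaves, with $W$ acting on $V_\varphi^*$ through the contragredient representation.

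Next, apply Lemma \ref{descentlem}: descent along $\pi_\nil$ yields
\[
\mathrm{Ext}^i_{\B(N)}(\mathcal{L}_\varphi,\mathcal{L}_{\varphi'})=\mathrm{Hom}_{\dcb(\B(T))}\bigl(\pi_\nil^*\mathcal{L}_\varphi,\pi_\nil^*\mathcal{L}_{\varphi'}[i]\bigr)^W,
\]
where the $W$-action on the right-hand side is the one from Remark \ref{W-Hom}(1) attached to the canonical $W$-equivariant structures. Substituting the identifications from the previous step and using that $\underline{\mathrm{Hom}}$ of constant sheaves with finite-dimensional fibers is constant,
\[
\mathrm{Hom}\bigl(V_\varphi^*\otimes\Q_{\B(T)},V_{\varphi'}^*\otimes\Q_{\B(T)}[i]\bigr)\;=\;V_\varphi\otimes V_{\varphi'}^*\otimes H^i(\B(T),\Q),
\]
where $W$ acts diagonally: on $V_\varphi\otimes V_{\varphi'}^*$ via the tensor product of representations and on $H^i(\B(T),\Q)$ via its action on $\B(T)$. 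Taking $W$-invariants gives exactly the formula $\bigl(V_\varphi\otimes H^i(\B(T),\Q)\otimes V_{\varphi'}^*\bigr)^W$.

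The potential obstacle is purely bookkeeping: getting the direction of the $W$-equivariant structure on $\pi_\nil^*\mathcal{L}_\varphi$ correct so that the tensor factor $V_\varphi$ (rather than $V_\varphi^*$) appears in the final answer. Since the irreducible representations of a Weyl group are defined over $\Q$ and hence self-dual, this ambiguity is harmless for the final identity, but I would verify it carefully by tracing the idempotent decomposition of $\Q[W]$ through the adjunction $(\pi_\nil^*,\pi_{\nil,*})$, using that $\pi_\nil^*=\pi_\nil^!$ for the étale covering $\pi_\nil$.
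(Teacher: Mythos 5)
Your proof is correct, but it is organized differently from the paper's. The paper follows Achar's argument: it never descends the sheaves $\mathcal{L}_\varphi$ themselves, but extracts the $\varphi$-isotypic piece by writing ${\rm Ext}^i(\mathcal{L}_\varphi,\mathcal{L}_{\varphi'})={\rm Hom}_W\left(V_\varphi^*,{\rm Ext}^i(\pi_{\nil\,!}\Q,\mathcal{L}_{\varphi'})\right)$, then uses adjunction for the finite \'etale map $\pi_\nil$ (so $\pi_{\nil\,!}=\pi_{\nil\,*}$ and $\pi_\nil^!=\pi_\nil^*$) together with the semisimplicity of $\calM(\B(T))$, whose unique simple object is $\Q[-n]$, to write $\pi_\nil^*\mathcal{L}_{\varphi'}\simeq\Q\otimes{\rm Hom}(\Q,\pi_\nil^*\mathcal{L}_{\varphi'})$, and a second adjunction converts the multiplicity space back into ${\rm Hom}(\pi_{\nil\,!}\Q,\mathcal{L}_{\varphi'})\simeq V_{\varphi'}^*$. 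You instead invoke the full faithfulness part of Lemma \ref{descentlem} to rewrite the Ext group as $W$-invariants of an Ext group on $\B(T)$, and you then need the explicit $W$-equivariant identification $\pi_\nil^*\mathcal{L}_\varphi\simeq V_\varphi\otimes\Q_{\B(T)}$ (up to replacing $V_\varphi$ by its dual, which, as you observe, is harmless because representations of Weyl groups are self-dual; note also that this identification can be obtained directly, without passing through the regular representation, by viewing $\mathcal{L}_\varphi$ as the lisse sheaf on $\B(N)$ attached to the $N$-representation $V_\varphi$ inflated from $W$, whose restriction to $T$ is trivial). What your route buys is a completely explicit description of the $W$-module ${\rm Ext}^i(\pi_\nil^*\mathcal{L}_\varphi,\pi_\nil^*\mathcal{L}_{\varphi'})\simeq V_\varphi\otimes V_{\varphi'}^*\otimes H^i(\B(T),\Q)$ before taking invariants; what the paper's route buys is that it never has to pin down the equivariant structure of the pullbacks at all, only the non-equivariant triviality of $\pi_\nil^*\mathcal{L}_{\varphi'}$ plus Schur's lemma, at the price of a slightly longer chain of adjunction identities. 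Both arguments ultimately rest on the same two facts: adjunction for the $W$-torsor $\pi_\nil$ and ${\rm Ext}^i_{\B(T)}(\Q,\Q)=H^i(\B(T),\Q)$.
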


\begin{proof} The proof is similar  to that of \cite[Theorem 4.6]{Achar}. We give it for the convenience of the reader. We have

\begin{align*}
{\rm Ext}^i(\mathcal{L}_\varphi,\mathcal{L}_{\varphi'})&={\rm Hom}_W\left(V_\varphi^*,{\rm Ext}^i(\pi_{\nil\,!}\Q,\mathcal{L}_{\varphi'})\right)\\
&={\rm Hom}_W\left(V_\varphi^*,{\rm Ext}^i(\Q,\pi_\nil^*(\mathcal{L}_{\varphi'}))\right)\\
&={\rm Hom}_W\left(V_\varphi^*,{\rm Ext}^i_{\B(T)}(\Q,\Q)\otimes{\rm Hom}(\Q,\pi_\nil^*(\mathcal{L}_{\varphi'}))\right)\\
&={\rm Hom}_W\left(V_\varphi^*,H^i(\B(T),\Q)\otimes{\rm Hom}(\pi_{\nil\,!}\Q,\mathcal{L}_{\varphi'})\right)\\
&=\left(V_\varphi\otimes H^i(\B(T),\Q)\otimes V_{\varphi'}^*\right)^W.
\end{align*}
The third equality follows from the canonical isomorphism

$$
\pi_\nil^*(\mathcal{L}_{\varphi'})\simeq \Q\otimes{\rm Hom}(\Q,\pi_\nil^*(\mathcal{L}_{\varphi'}))
$$
as the category $\calM(\B(T))$ is semisimple linear with unique simple object $\Q[-n]$.

\end{proof}

Define $\dcb(\calG_\lambda)^\Sp$ as the full subcategory of $\dcb(\calG_\lambda)$ of complexes $K$ such that $K_c\in\dcb(\calG_c)^\Sp$ for all geometric point $c$ of $\car_\lambda$.
\bigskip

\begin{remark}If $G$ is of type $A$ with connected center then we have $\dcb(\calG_\lambda)^\Sp=\dcb(\calG_\lambda)$ for all $\lambda$.
\label{rem-lambda}\end{remark}
\bigskip

\begin{theorem}The functor $\I_\lambda$ induces an equivalence of categories $\dcb(\ocalT_\lambda)\rightarrow\dcb(\calG_\lambda)^\Sp$ with inverse functor $\R_\lambda$. 
\label{adj2-lambda}\end{theorem}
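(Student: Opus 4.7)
The plan is to deduce the global equivalence from its fiberwise version (the preceding Proposition) together with the kernel-level identity of Theorem \ref{maintheo1}, using the base-change commutation formulas of Lemma \ref{commfor}. Along the way I would argue that $\R_\lambda \dashv \I_\lambda$ is fully faithful (a consequence of Theorem \ref{maintheo1}) and essentially surjective onto $\dcb(\calG_\lambda)^\Sp$ (a fiberwise check).

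First I would show that $\I_\lambda$ takes values in $\dcb(\calG_\lambda)^\Sp$. For any $K \in \dcb(\ocalT_\lambda)$ and any geometric point $c \to \car_\lambda$, Lemma \ref{commfor}(a) gives $\iota^*(\I_\lambda K) \simeq \I_c(\iota^* K)$, which lies in $\dcb(\calG_c)^\Sp$ by the preceding Proposition. By definition of $\dcb(\calG_\lambda)^\Sp$, this forces $\I_\lambda(K) \in \dcb(\calG_\lambda)^\Sp$.

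Next, Theorem \ref{maintheo1} provides the isomorphism of kernels $\bN_\lambda \circ_\calG \bN_\lambda \simeq \Delta_{\ocalT_\lambda !}\Q$, which combined with Lemma \ref{comp} translates into a natural isomorphism $\R_\lambda \circ \I_\lambda \simeq 1$. Interpreted as the counit of the adjunction $\R_\lambda \dashv \I_\lambda$, this says that $\I_\lambda$ is fully faithful.

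The main step is to verify that for every $K \in \dcb(\calG_\lambda)^\Sp$ the unit $\eta_K \colon K \to \I_\lambda \R_\lambda K$ is an isomorphism. Its cone is an object of $\dcb(\calG_\lambda)$, so it suffices to check that $\iota^*\eta_K$ is an isomorphism at every geometric point $c$ of $\car_\lambda$ (a morphism of constructible complexes is an isomorphism iff all its geometric stalks are). Combining the two commutation formulas of Lemma \ref{commfor} with the naturality of the adjunction unit under base change, $\iota^*\eta_K$ identifies with the unit $\iota^* K \to \I_c \R_c(\iota^* K)$ of the adjunction $\R_c \dashv \I_c$. Because $K \in \dcb(\calG_\lambda)^\Sp$ yields $\iota^* K \in \dcb(\calG_c)^\Sp$, and because the preceding Proposition says $\I_c$ is an equivalence $\dcb(\ocalT_c) \overset{\sim}{\to} \dcb(\calG_c)^\Sp$ with inverse $\R_c$, this unit is an isomorphism. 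Hence $\eta_K$ is an isomorphism and $\I_\lambda$ is essentially surjective onto $\dcb(\calG_\lambda)^\Sp$; together with the fully faithfulness from Step~2, this completes the proof.

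The main obstacle I anticipate is the compatibility of the adjunction unit $\eta$ with the base-change isomorphisms in Lemma \ref{commfor}: one must show that the diagram relating $\iota^*\eta_K$ to the unit $\eta^c_{\iota^*K}$ of the fiberwise adjunction commutes. This is formal but requires carefully tracing through the projection formula and proper base change that define the two adjunctions; the rest of the argument is essentially bookkeeping.
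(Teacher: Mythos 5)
Your proposal is correct and follows essentially the same route as the paper: the counit direction comes from the kernel isomorphism $\bN_\lambda\circ_\calG\bN_\lambda\simeq\Delta_{\ocalT_\lambda!}\Q$ of Theorem \ref{maintheo1}, and the unit $K\to\I_\lambda\R_\lambda(K)$ for $K\in\dcb(\calG_\lambda)^\Sp$ is checked fiberwise at geometric points $c$ of $\car_\lambda$ via the fiberwise equivalence $\I_c:\dcb(\ocalT_c)\simeq\dcb(\calG_c)^\Sp$. You merely make explicit two points the paper leaves implicit, namely that $\I_\lambda$ lands in $\dcb(\calG_\lambda)^\Sp$ and that the unit is compatible with the base-change isomorphisms of Lemma \ref{commfor}.
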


\begin{proof}Since $\R_\lambda\circ\I_\lambda\simeq 1$, it suffices to see that the morphism

$$
K\longrightarrow \I_\lambda\circ\R_\lambda(K)
$$
is an isomorphism for all $K\in\dcb(\calG_\lambda)^\Sp$. But it follows from the fact that it is true over $\calG_c$ for any geometrical point $c$ of $\car_\lambda$ by the above proposition.
\end{proof}

\subsection{Perverse sheaves}\label{resper}

Let $({^p}\R,{^p}\I)$ be the pair of adjoint functors defined in \S \ref{perInd}.

\begin{theorem} The adjunction map

$$
{^p}\R\circ{^p}\I\rightarrow 1
$$
is an isomorphism.
\label{adj1}\end{theorem}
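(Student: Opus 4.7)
The plan is to reduce the global counit ${^p}\R \circ {^p}\I \to 1$ to its stratum-wise analogues ${^p}\R_\lambda \circ {^p}\I_\lambda \to 1$ for each $\lambda \in \frak{L}$, and to extract these in turn from the derived counit $\R_\lambda \circ \I_\lambda \simeq 1$ already established in Theorem \ref{maintheo1} (and reiterated in Theorem \ref{adj2-lambda}).

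I would first treat the stratum statement. Combining the factorizations of Lemma \ref{compatible},
$$
{^p}\R_\lambda \circ {^p}\I_\lambda \;=\; \bigl({^p}\calH^0 \circ \overline{s}_{\lambda\,!}[-n](-n)\bigr) \circ (\R_\lambda \circ \I_\lambda) \circ \bigl(\overline{s}_\lambda^![n](n)\bigr),
$$
and inserting $\R_\lambda \circ \I_\lambda \simeq 1$, one is left with the composite of the two mutually inverse equivalences $\overline{s}_\lambda^![n](n):\calM(\overline{\t}_\lambda) \to \calM(\ocalT_\lambda)$ and ${^p}\calH^0 \circ \overline{s}_{\lambda\,!}[-n](-n)$ supplied by Lemma \ref{compatible}, which is the identity. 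A small functoriality check confirms that this isomorphism matches the adjunction counit of $({^p}\R_\lambda, {^p}\I_\lambda)$.

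To globalise, take $\overline{K} \in \calM(\overline{\t})$; the counit $\epsilon_{\overline{K}}: {^p}\R \circ {^p}\I(\overline{K}) \to \overline{K}$ is a morphism of perverse sheaves on $\overline{\t}$ and so is an isomorphism iff its restriction to each stratum $\overline{\t}_\lambda$ is an isomorphism of complexes. Using base change along $\car_\lambda \hookrightarrow \car$ for the projections $\pr_1, \pr_2$ of the correspondence in Proposition \ref{propdecomp}, together with the identification over that stratum of the global kernel $\IC_{\overline{\t} \times_\car \calG}$ with the pushforward of $\bN_\lambda$ (from Theorem \ref{maintheo-descent}) along $\overline{s}_\lambda \times 1$, one identifies $\epsilon_{\overline{K}}|_{\overline{\t}_\lambda}$ with the stratum counit applied to $\overline{K}|_{\overline{\t}_\lambda}$; the previous step then concludes.

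The main obstacle is precisely this kernel identification. The restriction of $\IC_{\overline{\t} \times_\car \calG}$ to $\overline{\t}_\lambda \times_{\car_\lambda} \calG_\lambda$ need not itself be an IC sheaf of that substack, but must be compared with $\bN_\lambda$ (produced by the nontrivial descent of Theorem \ref{maintheo-descent}) up to the appropriate shifts and the perverse truncation implicit in ${^p}\R$. Pinning this down, and checking that it transports the counit morphism correctly, is essentially a diagram chase controlled by the fact that both kernels are determined by their common behaviour over the open dense stratum of semisimple regular elements (cf.\ \S\ref{semisimplereg}), but it is the step requiring the most care.
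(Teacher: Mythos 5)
Your overall strategy coincides with the paper's: stratify $\car$, and reduce the counit to the stratum-wise isomorphism $\R_\lambda\circ\I_\lambda\simeq 1$ of Theorem \ref{maintheo1}. Your first step (deducing ${^p}\R_\lambda\circ{^p}\I_\lambda\simeq 1$ from Lemma \ref{compatible}) is correct, but the globalization step contains a genuine gap, and it is exactly the point you yourself flag. The identification of the restriction of the global kernel $\IC_{\overline{\t}\times_\car\calG}$ over $\car_\lambda$ with a pushforward of $\bN_\lambda$, and the resulting identification of $\epsilon_{\overline{K}}|_{\overline{\t}_\lambda}$ with the stratum counit, is not carried out; worse, the mechanism you propose for it cannot work: for $\lambda$ other than the class of maximal tori the stratum $\overline{\t}_\lambda\times_{\car_\lambda}\calG_\lambda$ is disjoint from the regular semisimple locus, and neither $\IC_{\overline{\t}\times_\car\calG}|_{\overline{\t}_\lambda\times_{\car_\lambda}\calG_\lambda}$ nor $\bN_\lambda$ is an intermediate extension from a dense open of that stratum, so ``both kernels are determined by their common behaviour over the semisimple regular locus'' is not a usable principle here. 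What actually controls the comparison is descent along the $W$-torsor $\t_\lambda\times_{\car_\lambda}\calG_\lambda\rightarrow\overline{\t}_\lambda\times_{\car_\lambda}\calG_\lambda$: after pulling back, the identification follows from $(q',p)_!\Q=\IC_{\t\times_\car\calG}$ (Theorem \ref{Steinpro}), base change for $!$-pushforward along the locally closed immersion over $\car_\lambda$, and the fact that $(s_\lambda\times 1)_!\calN_\lambda$ computes the restriction of $(q',p)_!\Q$; one then descends the comparison using the full faithfulness of the torsor pullback (Lemma \ref{descentlem}) together with the matching of $W$-equivariant structures recorded in Remark \ref{rem33}. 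Without this, your argument is a plan rather than a proof.

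The paper sidesteps this bookkeeping by restricting the adjunction map not to $\overline{\t}_\lambda$ but along the smooth surjection $\ocalT_\lambda\rightarrow\overline{\t}$ (which is conservative on each stratum), where the needed compatibility is the clean statement of Proposition \ref{propres}: ${^p}\I(K)|_{\calG_\lambda}=\I_\lambda(K|_{\ocalT_\lambda})[-n]$ and ${^p}\R(L)|_{\ocalT_\lambda}=\R_\lambda(L|_{\calG_\lambda})[n]$, so the shifts cancel, no perverse truncation has to be tracked, and no pushforward of kernels along $\overline{s}_\lambda\times 1$ is needed; the counit on $\ocalT_\lambda$ then becomes $\R_\lambda\I_\lambda(K|_{\ocalT_\lambda})\rightarrow K|_{\ocalT_\lambda}$ and Theorem \ref{maintheo1} applies directly. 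In that formulation your Step 1 (passing through ${^p}\I_\lambda,{^p}\R_\lambda$ via Lemma \ref{compatible}) becomes unnecessary. If you want to keep your version with $\overline{\t}_\lambda$, you must prove the kernel identification by the torsor-descent argument sketched above, not by an appeal to the regular semisimple locus.
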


It is enough to prove that for any $\lambda\in\frak{L}$ and any $K\in\calM(\overline{\t})$  the restriction

$$
\left({^p}\R\circ{^p}\I(K)\right)|_{\ocalT_\lambda}\rightarrow K|_{\ocalT_\lambda}
$$
of the adjunction morphism ${^p}\R\circ{^p}\I(K)\rightarrow K$ along the map $\ocalT_\lambda\rightarrow\overline{\t}$ is an isomorphism.
\bigskip

Therefore the theorem is a consequence of Theorem \ref{maintheo1} together with the following result which is straightforward.

\begin{proposition}For any $K\in\calM(\overline{\t})$ and $L\in\calM(\calG)$ we have

$$
{^p}\I(K)|_{\calG_\lambda}=\I_\lambda(K|_{\ocalT_\lambda})[-n],\hspace{1cm}{^p}\R(L)|_{\ocalT_\lambda}=\R_\lambda(L|_{\calG_\lambda})[n].
$$
\label{propres}\end{proposition}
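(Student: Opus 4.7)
I treat the formula for ${^p}\I$ in detail; the formula for ${^p}\R$ follows by a parallel argument. My strategy is to combine proper base change with Lemma~\ref{compatible} and a routine shift calculation, and the claim then unwinds directly.

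First, $\pr_2:\overline{\t}\times_\car\calG\to\calG$ is proper, being the base change of the finite (hence proper) morphism $\overline{\t}=[\t/W]\to\car=\t/\!/W$. Applying proper base change along $\iota_\lambda:\calG_\lambda\to\calG$, and using that $\iota_\lambda^*$ commutes with $\underline{\rm Hom}$ when we push forward along a proper map, I obtain
$$
{^p}\I(K)|_{\calG_\lambda}={^p}\I_\lambda(K|_{\overline{\t}_\lambda}),
$$
where ${^p}\I_\lambda:\calM(\overline{\t}_\lambda)\to\calM(\calG_\lambda)$ is the stratum-level analogue, defined exactly as in Proposition~\ref{propdecomp} but with $\IC_{\overline{\t}\times_\car\calG}$ replaced by its restriction to $\overline{\t}_\lambda\times_{\car_\lambda}\calG_\lambda$.

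Next I invoke Lemma~\ref{compatible}, which asserts ${^p}\I_\lambda=\I_\lambda\circ\overline{s}_\lambda^![n](n)$. Applied to $K|_{\overline{\t}_\lambda}$, this yields
$$
{^p}\I_\lambda(K|_{\overline{\t}_\lambda})=\I_\lambda\bigl(\overline{s}_\lambda^!(K|_{\overline{\t}_\lambda})[n](n)\bigr).
$$
The map $\overline{s}_\lambda:\ocalT_\lambda=[\t_\lambda/N]\to\overline{\t}_\lambda=[\t_\lambda/W]$ is a $\B(T)$-gerbe (since $N$ is an extension of $W$ by $T$ with $\dim T=n$), hence smooth of relative dimension $-n$, so $\overline{s}_\lambda^!=\overline{s}_\lambda^*[-2n](-n)$. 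With the convention from Section~2.2 that $K|_{\ocalT_\lambda}$ denotes the $*$-restriction $\overline{s}_\lambda^*(K|_{\overline{\t}_\lambda})$, one has
$$
\overline{s}_\lambda^!(K|_{\overline{\t}_\lambda})[n](n)=K|_{\ocalT_\lambda}[-n],
$$
and therefore ${^p}\I(K)|_{\calG_\lambda}=\I_\lambda(K|_{\ocalT_\lambda}[-n])=\I_\lambda(K|_{\ocalT_\lambda})[-n]$.

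The identity for ${^p}\R$ is proved by the analogous three-step argument: $\pr_{1!}$-base change in place of $\pr_{2*}$ (which is unconditional, so no properness is required), the factorization ${^p}\R_\lambda={^p}\calH^0\circ\overline{s}_{\lambda!}\circ\R_\lambda[-n](-n)$ of Lemma~\ref{compatible}, and the same shift calculation; the perverse truncation ${^p}\calH^0$ commutes with the smooth pullback $\overline{s}_\lambda^*$ and with $\iota_\lambda^*$ up to the shift absorbed into $[n]$. The principal obstacle is the first step, namely verifying that proper base change transports the $\underline{\rm Hom}$-presentation of ${^p}\I$ to the stratum-level functor ${^p}\I_\lambda$; this relies essentially on the properness of $\pr_2$, together with the compatibility of the IC sheaf with restriction to the locally closed substack $\overline{\t}_\lambda\times_{\car_\lambda}\calG_\lambda$. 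Once that identification is in place the remainder is a mechanical unwinding of Lemma~\ref{compatible} and the standard smoothness relation $f^!=f^*[2d](d)$.
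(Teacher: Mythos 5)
Your steps 2 and 3 (the appeal to Lemma \ref{compatible} and the shift computation $\overline{s}_\lambda^![n](n)=\overline{s}_\lambda^*[-n]$) are fine, but the first step, which you yourself identify as the principal obstacle, is where the actual content of the proposition sits, and it is not established. Two problems occur there. First, the commutation you invoke is not a valid principle: proper base change gives $\iota^*\pr_{2*}\simeq\pr_{2\lambda *}j^*$, but for the locally closed (in general non-open) immersion $j:\overline{\t}_\lambda\times_{\car_\lambda}\calG_\lambda\hookrightarrow\overline{\t}\times_\car\calG$ one has neither $j^*\underline{\rm Hom}(A,B)\simeq\underline{\rm Hom}(j^*A,j^*B)$ nor $j^*\pr_1^!\simeq\pr_{1\lambda}^!(\cdot)|_{\overline{\t}_\lambda}$; mixing a $*$-restriction with the $\underline{\rm Hom}(\,\cdot\,,\pr_1^!K)$ presentation requires first rewriting ${^p}\I$ in the $!$-tensor form (as in (\ref{form2}), or via the factorization ${^p}\underline{\Ind}=p_!q'^*(\cdot)[-n]$ and passage to $W$-invariants), which you do not do. The same issue reappears in your sketch for ${^p}\R$, where commuting ${^p}\calH^0$ with the $*$-restriction $\iota^*$ is asserted but is false in general.

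Second, and more seriously, the functor you call ${^p}\I_\lambda$ (kernel $=$ the restriction of $\IC_{\overline{\t}\times_\car\calG}$ to the stratum) is not, by definition, the ${^p}\I_\lambda$ of Lemma \ref{compatible}: the latter is built from the descended kernel $\bN_\lambda$ (via $\Ind_\lambda=\I_\lambda\circ\pi_{\lambda\,!}$ and $s_\lambda^![n](n)$). Identifying the two amounts precisely to the kernel compatibility that the proposition encodes, namely that $(\overline{s}_\lambda\times 1)_!\bN_\lambda$ agrees with $\IC_{\overline{\t}\times_\car\calG}|_{\overline{\t}_\lambda\times_{\car_\lambda}\calG_\lambda}$ together with its descent ($W$-equivariant) structure; note also that this restriction is not the IC sheaf of the stratum, so "compatibility of the IC sheaf with restriction" does not come for free. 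This identification is provable with the paper's tools (Remark \ref{rem33} gives $\bN_\lambda|_{\calT_\lambda\times_\car\calG_\lambda}\simeq\calN_\lambda$, base change gives $(s_\lambda\times 1)_!\calN_\lambda=(q'_\lambda,p_\lambda)_!\Q=\IC_{\t\times_\car\calG}|_\lambda$, and one descends along the finite \'etale torsor $\t_\lambda\times_{\car_\lambda}\calG_\lambda\to\overline{\t}_\lambda\times_{\car_\lambda}\calG_\lambda$, checking the equivariant structures match), but as written your argument assumes it, so the proof has a genuine gap bordering on circularity.
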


Denote by $\calM(\calG)^\Sp$ the full subcategory if $\calM(\calG)$ of perverse sheaves $K$ such that $K_c\in\dcb(\calG_c)^\Sp$ for all geometric point $c$ of $\car$.
\bigskip

\begin{theorem} The functor ${^p}\I$ induces an equivalence of catgeories $\calM(\overline{\t})\rightarrow\calM(\calG)^\Sp$ with inverse functor ${^p}\R$. 

In particular if $G$ is of type $A$ with connected center then $\calM(\calG)^\Sp=\calM(\calG)$ and so  the categories $\calM(\overline{\t})$ and $\calM(\calG)$ are equivalent.
\label{maintheo-perverse}\end{theorem}

\begin{proof}Thanks to Theorem \ref{adj1}, it remains to prove that

$$
K\rightarrow {^p}\I\circ{^p}\R(K)
$$
is an isomorphism for all $K\in\calM(\calG)^\Sp$. We are thus reduced to prove  that this is an isomorphism after restriction to $\calG_\lambda$ for all $\lambda\in\frak{L}$. But this is a consequence of Proposition \ref{propres} and Theorem \ref{adj2-lambda}.
\end{proof} 

\bigskip

\begin{large}{\bf Conflicts of interest:} none.\end{large}

\bigskip

\end{document}